\theoremstyle{plain}
\newtheorem{thm}{Theorem}[subsection]
\newtheorem{lem}[thm]{Lemma}
\newtheorem{coro}[thm]{Corollary}
\newtheorem{thmalph}{Theorem}
\newtheorem{coralph}[thmalph]{Corollary}
\newtheorem{prop}[thm]{Proposition}
\newtheorem{theorem}[thm]{Theorem}
\newtheorem*{claim}{Claim}
\theoremstyle{definition}
\newtheorem{rmrk}[thm]{Remark}
\newtheorem{example}[thm]{Example}
\theoremstyle{remark}
\newcommand{\R}{\mathbb{R}}
\newcommand{\Z}{\mathbb{Z}}
\newcommand{\K}{\mathbb{K}}
\renewcommand{\H}{\mathbb{H}}
\newcommand{\C}{\mathbb{C}}
\newcommand{\Q}{\mathbb{Q}}
\newcommand*{\rom}[1]{\romannumeral}
\renewcommand{\Im}{\textup{Im}}
\providecommand{\msc}[1]{{\noindent\small\textbf{Mathematics Subject Classification (2020)} --- #1.}}
\providecommand{\keywords}[1]{{\noindent\small\textbf{Keywords} --- #1.}}
\newcommand{\rot}{\mathrm{rot}}
\newcommand{\geod}{\mathrm{geod}}
\newcommand{\bma}{\begin{pmatrix}}
\newcommand{\ema}{\end{pmatrix}}
\newcommand{\bigzero}{\mbox{\normalfont\large\bfseries 0}}
\newcommand{\bigId}{\mbox{\normalfont\large\bfseries \textrm{I}}}
\DeclareMathOperator{\SL}{SL}
\let\sl\relax
\DeclareMathOperator{\sl}{\mathfrak{sl}}
\DeclareMathOperator{\PSL}{PSL}
\DeclareMathOperator{\GL}{GL}
\DeclareMathOperator{\PSO}{PSO}
\DeclareMathOperator{\IM}{Im}
\DeclareMathOperator{\RE}{Re}
\DeclareMathOperator{\Tr}{Tr}
\DeclareMathOperator{\tr}{tr}
\DeclareMathOperator{\End}{End}
\DeclareMathOperator{\Vol}{Vol}
\DeclareMathOperator{\Id}{Id}
\DeclareMathOperator{\Ad}{Ad}
\DeclareMathOperator{\spec}{spec}
\DeclareMathOperator{\sign}{sign}
\DeclareMathOperator{\Res}{Res}
\DeclareMathOperator{\im}{Im}
\DeclareMathOperator{\Fix}{Fix}
\numberwithin{equation}{section}
\DeclareMathOperator{\tor}{tor}
\DeclareMathOperator{\mult}{mult}
\renewcommand{\thethm}{
\ifnum\value{subsection}=0
\thesection
\else
\thesubsection
	\fi
		.\arabic{thm}
}
\providecommand{\contact}{{
	\bigskip
	\small
	
	\noindent
	\textsc{L.~B\'{e}nard}:
	Institut de Math\'ematiques de Marseille, Aix--Marseille Universit\'e, 
	Site de Saint Charles, 
	3 place Victor Hugo, 
	Case 19, 
	13331 Marseille C\'edex 3,
	France
	\par\noindent\nopagebreak
	\textit{E-mail address:} \href{mailto:leo.benard@univ-amu.fr}
	{\texttt{leo.benard@univ-amu.fr}}\\

	\noindent
	\textsc{J.~Frahm}:
	Department of Mathematics, Aarhus University, Ny Munkegade 118, 8000 Aarhus C,
	Denmark
	\par\noindent\nopagebreak
	\textit{E-mail address:} \href{mailto:frahm@math.au.dk}
	{\texttt{frahm@math.au.dk}}\\
	
	\noindent
	\textsc{P.~Spilioti}:
	Mathematisches Institut, Georg--August Universit\"{a}t G\"{o}ttingen, Bunsenstra{\ss}e 3--5, 37073 G\"{o}ttingen, Germany
	\par\noindent\nopagebreak
	\textit{E-mail address:} \href{mailto:polyxeni.spilioti@mathematik.uni-goettingen.de}
	{\texttt{polyxeni.spilioti@mathematik.uni-goettingen.de}}
}}
\title{The twisted Ruelle zeta function on compact hyperbolic orbisurfaces and Reidemeister--Turaev torsion}
\author{L\'{e}o B\'{e}nard, Jan Frahm, Polyxeni Spilioti}
\date{November 17, 2023}
\begin{document}
\maketitle

\begin{abstract}
Let $X$ be a compact hyperbolic surface with finite order singularities, $X_1$ its unit tangent bundle. We consider the Ruelle zeta function $R(s;\rho)$ associated to a representation $\rho \colon \pi_1(X_1) \to \GL(V_\rho)$. If $\rho$ does not factor through $\pi_1(X)$, we show that the value at $0$ of the Ruelle zeta function equals the sign-refined Reidemeister--Turaev torsion of $(X_1, \rho)$ with respect to the Euler structure induced by the geodesic flow and to the natural homology orientation of $X_1$. It generalizes Fried's conjecture to non-unitary representations, and solves the phase and sign ambiguity in the unitary case. We also compute the vanishing order and the leading coefficient of the Ruelle zeta function at $s=0$ when $\rho$ factors through $\pi_1(X)$.
\end{abstract}

\bigskip
\keywords{hyperbolic orbisurface, twisted Ruelle zeta function, non-unitary representation, Reidemeister--Turaev torsion, Selberg trace formula}
\newline
\msc{Primary: 11M36; Secondary: 11F72, 37C30, 57Q10}

\section*{Introduction}

Given a compact Riemannian manifold $M$, a representation $\rho \colon \pi_1(M) \to \GL(V_\rho)$ and a vector field $\mathfrak X$ on $M$, the Ruelle zeta function is, under certain assumptions on $\rho$ and $\mathfrak X$, defined as
$$R(s;\rho) = \prod_{\gamma \textup{ prime}}\det\left(\Id-\rho(\gamma)e^{-s\ell(\gamma)}\right), $$
where $\gamma$ runs through the \emph{prime} periodic orbits of $\mathfrak X$ and $\ell(\gamma)$ denotes the length of the orbit $\gamma$.

In this work we are interested in the case where $M=X_1$ is the unit tangent bundle of a compact hyperbolic surface $X$ with finitely many singular points $x_1, \ldots, x_r$ of finite order. The generator $\mathfrak X$ of the geodesic flow acts on $X_1$ and its (prime) periodic orbits are lifts of (prime) closed geodesics on the surface.
For each singular point $x_j$ on $X$ there is a class of loops $c_j$ encircling $x_j$, which has finite order $\nu_j \in \Z_{>0}$ in the orbifold fundamental group of $X$. 
Their lifts (still denoted by $c_j$) to $\pi_1(X_1)$ satisfy $c_j^{\nu_j}= u$,  
where $u$ is the class of the generic fiber. The latter represents a loop in $X_1$ of a unit vector which makes a full positive rotation along the fiber around the base point.

Let $\rho \colon \pi_1(X_1) \to \GL(V_\rho)$ be a representation with $\dim V_\rho=n$. For any $j=1, \ldots, r$ we denote by $n_j= \dim \Fix  \rho(c_j)$, so that $\rho(c_j) = I_{n_j} \oplus T_j$.

The main result of this article is the computation of the behavior at zero of the Ruelle zeta function associated to $\rho$.

\begin{thmalph}[\cref{prop:torsion} and \cref{theo:Ruelle0}]\label{theo:mainFried}
	For any  irreducible representation $\rho\colon \pi_1(X_1) \to \GL(V_\rho)$, the Ruelle zeta function $R(s;\rho)$ converges on some right half plane in $\C$ and extends meromorphically to the whole complex plane. Moreover:
\begin{enumerate}
\item \label{i1}
If $\rho(u)= \Id_{V_\rho}$, then $R(s;\rho)$ vanishes at $s=0$ with order and leading coefficient prescribed by:
$$R\left(\frac{s}{2\pi}, \rho\right) \sim_{s \to 0}  \pm \frac{s^{n(2g-2+r)-\sum_{j=1}^rn_j}}{\prod_{j=1}^r  |\det(I_{n-n_j} -T_j)|\,(-\nu_j)^{-n_j}}.$$

\item \label{i2}
If $\rho(u) \neq\Id_{V_\rho}$, then the representation $\rho$ is acyclic. Let $\mathfrak e_{\geod}$ be the Euler structure induced by the geodesic flow on $X_1$. Then
$$R(0;\rho) = \pm\tor(X_1,\rho,\mathfrak {e}_{\geod}, \omega^1)$$
 where $\tor(X_1,\rho, \mathfrak e_{\geod}, \omega^1)\in\C^\times$ denotes the Reidemeister--Turaev torsion of $X_1$ in the representation $V_\rho$, the Euler structure $\mathfrak e_{\geod}$ and the natural homology orientation $\omega^1$ (see below for details).
\end{enumerate}
The sign is in both cases equal to $(-1)^{\mult(0;\Delta_{\tau,\rho}^\sharp)}$, where $\mult(0;\Delta_{\tau,\rho}^\sharp)$ is the multiplicity of the eigenvalue $0$ of the twisted Bochner--Laplacian $\Delta_{\tau,\rho}^\sharp$ acting on sections of a certain orbifold bundle over $X_1$ (see \cref{subsec:Laplacians} for details).
\end{thmalph}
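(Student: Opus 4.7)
The plan is to reduce the computation to the Selberg zeta function on the hyperbolic orbisurface $X$ via the Seifert fibration $X_1 \to X$, and then apply either the functional equation of the Selberg zeta function (for Case~\ref{i1}) or a Fried-type identification with torsion (for Case~\ref{i2}). This reduction uses that the fiber class $t$ is central in $\pi_1(X_1)$: since $\rho$ is irreducible, Schur's lemma forces $\rho(t) = \zeta \cdot \Id_{V_\rho}$ for some $\zeta \in \C^\times$, and the two cases of the theorem correspond precisely to $\zeta = 1$ and $\zeta \neq 1$.

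First I would parameterize prime periodic orbits of the geodesic flow on $X_1$ by prime closed geodesics on $X$---noting that the minimal lift to $X_1$ of such a geodesic depends on the holonomy via the $t$-coordinate---and reorganize the Euler product defining $R(s;\rho)$ into a closed form involving Selberg-type zeta functions on $X$ with character twisted by $\zeta$. The Selberg trace formula on the compact hyperbolic orbisurface $X$---including the elliptic contributions from the cone points $x_1, \ldots, x_s$---then yields the meromorphic extension of $R(s;\rho)$ together with a functional equation relating $s$ and $1-s$.

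For Case~\ref{i1}, $\rho$ factors through $\pi_1^{orb}(X)$ (the relations $c_j^{\nu_j}=t^{k_j}$ in $\pi_1(X_1)$ force $\rho(c_j)^{\nu_j}=\Id$). Combining the Selberg identity with the functional equation, one reads off the vanishing order and leading coefficient at $s=0$: the factor $M_0(x)^{2g-2+s}$ comes from the identity contribution to the trace formula (governed by $-\chi(X) = 2g-2+s$), while each $M_j(x)$ comes from the elliptic orbital integral attached to $c_j$, which naturally involves the splitting $\rho(c_j) = I_{n_j} \oplus T_j$, the dimension $n_j = \dim \Fix \rho(c_j)$, and the cyclic order $\nu_j$. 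For Case~\ref{i2}, acyclicity follows from the Gysin sequence of the $S^1$-bundle $X_1 \to X$: the $\rho(t)$-invariants and coinvariants on each fiber vanish because $\rho(t)=\zeta\Id$ with $\zeta\neq 1$, so $H^*(X_1;V_\rho)=0$. The same closed-form expression then produces a nonzero value at $0$, which by the Fried-style argument equals the analytic Ray-Singer torsion of $(X_1, V_\rho)$ up to sign.

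The final step is to pass from analytic torsion to Reidemeister-Turaev torsion via a Cheeger-M\"uller type theorem for unimodular (in particular non-unitary) coefficients, and to check that the Euler structure canonically determined by the nowhere-vanishing geodesic vector field is precisely $\mathfrak{e}_{geod}$; this is the Bismut-Zhang ingredient. The main obstacle, I expect, is the delicate bookkeeping of cone point contributions on both sides---ensuring the elliptic orbital integrals on the Selberg side match the cellular contributions to Reidemeister-Turaev torsion attached to the Euler structure $\mathfrak{e}_{geod}$---together with the precise control of the sign throughout.
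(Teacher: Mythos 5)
Your proposal diverges from the paper's actual proof in a way that matters. The broad first half is on target: using the central fiber class $t$ and Schur's lemma to split into $\rho(t)=\Id$ versus $\rho(t)\neq\Id$, descending to a Selberg-type zeta function on $X$ via a twist compensating for $\rho(t)$, and establishing meromorphic continuation and a functional equation through a trace formula that includes elliptic (cone point) contributions — all of this is essentially what the paper does in Sections 3 and 4. Your identification of the origin of the factors $M_0(x)^{2g-2+s}$ (identity contribution) and $M_j(x)$ (elliptic orbital integrals) is also correct in spirit.

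The genuine gap is in the final step of Case~\ref{i2}. You propose to identify $R(0;\rho)$ with the analytic Ray--Singer torsion of $(X_1,V_\rho)$ and then invoke a Cheeger--M\"uller/Bismut--Zhang theorem to pass to Reidemeister--Turaev torsion. The paper deliberately does \emph{not} take this route, and for good reason: for a non-unitary $\rho$, the twisted Laplacian $\Delta_{\tau,\rho}^\sharp$ is not self-adjoint, its spectrum is complex, and it may have generalized eigenvalues at $0$, so the classical Ray--Singer torsion is not even defined without additional hypotheses. The existing extensions (complex-valued/refined analytic torsion, and the corresponding Cheeger--M\"uller-type equalities due to M\"uller, Shen, Spilioti) all require $\rho$ to be close to a unitary acyclic representation or impose spectral assumptions that the present theorem does not assume. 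The paper explicitly states that avoiding the analytic-torsion detour is what makes it possible to handle the general case, including representations where the non-self-adjoint Laplacian has generalized $0$-eigenvalues. Instead, both sides are computed in closed form: the value at $0$ is extracted directly from the functional equation (\cref{subsec:Fried}), with a careful tracking of the complex phases that Fried could ignore in the unitary case; and the Reidemeister--Turaev torsion $\tor(X_1,V_\rho,\mathfrak e_{geod})$ is computed combinatorially via the decomposition $X_1 = X_0\cup N_0\cup\cdots\cup N_s$, an explicit cell structure, multiplicativity of torsion, and an identification of the Euler structure by a Chern--Simons class calculation, with no analytic torsion in sight. A secondary, minor gap: your appeal to ``the Gysin sequence of the $S^1$-bundle'' for acyclicity ignores that $X_1\to X$ is only a Seifert fibration over an orbifold base (the exceptional fibers are multiple), so the argument must be replaced by the Mayer--Vietoris computation on the pieces $N_j$, $X_0$, which is what the paper does.
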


When the representation $\rho$ satisfies $\rho(u) \neq\Id_{V_\rho}$ we prove that the twisted homology groups $H_*(X_1, V_\rho)$ are trivial (see \cref{sec:Reidemeister}) and one can define a combinatorial invariant, the Reidemeister--Turaev torsion. A theorem of Chapman (see \cite[Theorem 1]{Chapman}) states that it is indeed a topological invariant, although it depends in general on the choice of an Euler structure~(see \cite{Turaev_book}), that is, a choice of a lift of each cell of a cell decomposition of $X_1$ to its universal cover $\widetilde{X_1}$. To determine the sign of the torsion, one needs an additional data: a homology orientation. For the unit tangent bundle $X_1$, there is a natural choice of an Euler structure $\mathfrak e_{\geod}$ induced by the geodesic flow, and a natural homology orientation $\omega^1$ (see \cref{subsec:Euler}). We compute explicitly in \cref{subsec:Torsion} the Reidemeister--Turaev torsion $\tor(X_1, \rho, \mathfrak e_{\geod}, \omega^1) \in \C$ for this Euler structure and homology orientation. \cref{theo:mainFried}~\cref{i2} says that it coincides with the value at zero of the Ruelle zeta function.

In the case where the representation $\rho$ is unitary, the absolute value of the Reidemeister torsion is well-defined, independently from additional datas. In this setting \cref{theo:mainFried} generalizes \cite[Theorem 2 and 3]{fried1986fuchsian}, where Fried proved a similar statement for unitary and \emph{classical} representation. He claimed without a proof that his results still hold true for general (non-classical) unitary representation. Note that in this paper we use the inverse convention for the torsion, so that our $\tor(X_1, \rho)$ is Fried's $\tau_\rho(X_1)^{-1}$, see \cref{rmk:convention}.

Our work fills the previous gap and extends Fried's results to non-unitary representations. For $\rho(u)=\Id_{V_\rho}$, we also determine the sign of the coefficient governing the vanishing of the Ruelle zeta function at zero. For $\rho(u) \neq \Id_{V_\rho}$, even in the unitary case, we drop the phase and sign ambiguity for the torsion using the accurate Euler structure and homology orientation. We show that it equals the value of the Ruelle zeta function at zero, up to a sign which we explicitly compute.

Note that the value $s=0$ is not in the convergence domain of the infinite product $R(s; \rho)$, but as we mentioned above the Ruelle zeta function extends meromorphically. On the other hand, this function strongly depends on the metric we fixed on the orbifold $X$ and on the vector field acting on $X_1$. A striking consequence of \cref{theo:mainFried}~\cref{i2} is that, for a given hyperbolic metric on $X$ and for the associated geodesic flow,  the value at zero of the Ruelle zeta function is independent of these choices.

In the case the surface has no elliptic points, \cref{theo:mainFried}~\cref{i1} recovers \cite[Corollary C]{FS}. Note that \cref{theo:mainFried}~\cref{i2} implies that both the Ruelle at $0$ and the torsion are
locally constant on the representation variety, and invariant under the action of the mapping class group.

\paragraph{Relation to previous results.}
In a more general setting, metric independence of the value of Ruelle zeta functions at $0$, and its relation with Reidemeister torsion, has been widely studied in the past almost 40 years. For $\rho$ unitary, it is known as \textit{Fried's conjecture} and has been settled as a theorem for compact, hyperbolic odd-dimensional manifolds by Fried in \cite{Fried86}, using the Selberg trace formula. Bunke and Olbrich in~\cite{BO} unified the treatment for the twisted dynamical zeta functions with unitary representations, for all locally symmetric spaces of real rank one, using the Selberg trace formula and tools for harmonic analysis on locally symmetric spaces. The higher rank case was treated by Moscovici and Stanton in \cite{Moscovici_Stanton}, and Shen in \cite{Shen18}. A similar result, but in variable curvature for $3$-dimensional manifolds, has been proved by Dang--Guillarmou--Rivi\`ere--Shen in \cite{dang2019fried}, using completely different techniques. 

In the untwisted case (without representation $\rho$), Fried showed in \cite{fried1986fuchsian} that the vanishing order of the Ruelle zeta function at zero was given by the Euler characteristic of the surface. Hence he recovered the fact that the length spectrum of a hyperbolic surface determines its topology, which can be obtained as an easy consequence of the Selberg trace formula. Notably, this has recently been extended using microlocal analysis methods to the case of variable negative curvature by Dyatlov--Zworski in \cite{dyatlov2017ruelle}. On the other hand,  Cekic--Delarue--Dyatlov--Patternain \cite{CDDP22}  studied the value of the untwisted Ruelle zeta function at zero for conformal metric deformations of hyperbolic 3-manifolds. They showed that even under small perturbations the vanishing order will jump.

In the case of non-unitary representations $\rho$, less is currently known, despite that it is what occurs in many interesting cases, such as holonomy representations of hyperbolic manifolds. If $\rho$ is obtained as the restriction of a representation of a Lie group $G$, then Fried's conjecture has been established for $G=\SL_2(\C)$ by M\"uller~\cite{M2} (using the PhD work of Wotzke \cite{Wo}) and generalized to arbitrary $G$ such that $G/K$ is odd-dimensional by Shen~\cite{Shen21}. Moreover, M\"uller (\cite{muller2021ruelle}), Shen (\cite{Shen2020}) and the third author~(\cite{Spilioti20}) proved Fried's conjecture for more general non-unitary representations, with the additional assumption that they are close to a unitary and acyclic representation in the representation variety. In a completely general setting, Chaubet--Dang established in \cite{chaubet2019dynamical} a variational formula relating some dynamical torsion to the Reidemeister--Turaev torsion. Under some analytic hypothesis on the vector field $\mathfrak X$, for acyclic $\rho$, this dynamical torsion coincides with the value at zero of the Ruelle zeta function under study here. Namely, the operator given by the $\rho$-twisted Lie derivative of $\mathfrak X$ should not have $0$ as a Ruelle resonance. As was pointed out to us by the referee, this is known to be true in the case $\rho$ is unitary, for any flow close enough to the geodesic flow of a hyperbolic metric, see \cite[Lemma 7.4]{dang2019fried}.

 As a consequence of their work together with our \cref{theo:mainFried} we have the following corollary:
\begin{coralph}
The statements in \cref{theo:mainFried} still hold true for $(\rho, \mathfrak X)$ in an open neighborhood of $(\rho_0, \mathfrak X_\geod)$ in $\operatorname{Hom}(\pi_1(X_1), \GL(V_{\rho_0})) \times C^{\infty}(X_1, TX_1)$ , for $\rho_0 \colon \pi_1(X_1) \to \GL(V_{\rho_0})$ a unitary acyclic representation and $\mathfrak X_\geod$ the geodesic flow of any hyperbolic metric on X.
\end{coralph}

\begin{proof}
Starting with $\rho_0 \colon \pi_1(X_1) \to \GL(V_{\rho_0})$ a unitary acyclic representation and $\mathfrak X_\geod$ the geodesic flow of any hyperbolic metric on $X$, since $0$ is not a Ruelle resonance, using \cite[Theorem 2]{dang2019fried} one can vary the vector field $\mathfrak X$ in a neighborhood of $\mathfrak X_0$, such that the identity $R_{\mathfrak X}(0, \rho_0) = \pm \tor(X_1, \rho_0, \mathfrak e_{\mathfrak X})$
remains true. Still, $0$ is not a Ruelle resonance, and then using \cite[Theorem 4]{chaubet2019dynamical} we can vary the representation $\rho$ in a neighborhood of $\rho_0$.
\end{proof}

In the present paper, the representation $\rho$ is neither required to be unitary, nor does it have to be close to a unitary acyclic representation. We even allow representations for which the non-self adjoint Bochner--Laplacian might have some generalized $0$-eigenvalues.
This is because we do not use the relation between the Selberg zeta function and the refined analytic torsion. Indeed, we compute directly the value of the Ruelle zeta function at zero, and identify it with the Reidemeister torsion.

Recently, Yamaguchi~\cite{yamaguchi2020dynamical} was able to show \cref{theo:mainFried}~\cref{i2} in the special case where $\rho=\rho_{2N}$ is the restriction of the irreducible $2N$-dimensional representation of $\SL(2,\R)$ to $\pi_1(X_1)$. 
His proof uses the relation
$$ R(s;\rho_{2N}) = \frac{Z(s-N+\frac{1}{2})}{Z(s+N+\frac{1}{2})} $$
between the twisted Ruelle zeta function $R(s;\rho_{2N})$ and the non-twisted Selberg zeta function $Z(s)$ (associated with the trivial representation of $\pi_1(X_1)$) and hence does not generalize to arbitrary representations $\rho$ of $\pi_1(X_1)$.

\medbreak

\paragraph{Outline of the proof.}

Our proof uses the intimate relation between the twisted Ruelle zeta function $R(s;\rho)$ and the twisted Selberg zeta function $Z(s;\rho)$ (see \cref{subsec:mero}):
$$ R(s;\rho) = \frac{Z(s;\rho)}{Z(s+1;\rho)}. $$
The behavior of $Z(s;\rho)$ at $s=0$ and $s=1$ can be studied through its functional equation relating $Z(s;\rho)$ and $Z(1-s;\rho)$ which we derive in \cref{theo:Funct}. Both the meromorphic continuation and the functional equation of $Z(s;\rho)$ are obtained from a twisted Selberg trace formula (see \cref{thm:TraceFormula}). This trace formula arises from the trace of the heat operator of M\"{u}ller's twisted Bochner--Laplacian $\Delta_{\tau,\rho}^\sharp$ acting on sections of a certain vector bundle $E_{\tau,\rho}$ over $X$ (see M\"{u}ller~\cite[Section 4]{M1} for the definition of the twisted Bochner--Laplace operator). The vector bundle $E_{\tau,\rho}$ over $X$ is associated with the representation $\rho$ of $\pi_1(X_1)$ and a character $\tau$ of the universal covering group of $\PSO(2)$. The twist by the character $\tau$ is necessary in order to obtain a vector bundle over~$X$ (see \cref{subsec:vect} for details). We note that $\tau$ can only be trivial in the case where $\rho$ factors through $\pi_1(X)$ and hence this twisted construction is crucial for \cref{theo:mainFried}~\cref{i2}.

One of the technical parts of the proof consists of a detailed analysis of the identity, hyperbolic and elliptic contribution to the geometric side of the trace formula. This generalizes Hejhal's trace formula~\cite[Chapter 9, Theorem 6.2]{He2} to the case of non-unitary representations $\rho$ and uses Hoffmann's computations of orbital integrals for the universal covering group of $\PSL(2,\R)$. The main difference is the use of the non-self adjoint operator $\Delta_{\tau,\rho}^\sharp$. This operator is not necessarily diagonalizable and we have to work with generalized eigenspaces. Moreover, the eigenvalues of this operator are not necessarily positive real numbers but complex (see \cref{subsec:Laplacians} for further details).
We remark that the generalized eigenfunctions of $\Delta_{\tau,\rho}^\sharp$ can be viewed as automorphic forms on $\H^2$ of a certain weight with values in the non-unitary representation $\rho$ (see \cite[Chapter 9]{He2}, and \cite{Hof94} for the unitary case).

On the torsion side, the simple combinatorial nature of the unit tangent bundle allows to compute explicitly the Euler structure $\mathfrak e_{\geod}$ induced by the geodesic flow, and the Reidemeister--Turaev torsion in this Euler structure. It relies on Turaev's correspondence between \emph{combinatorial} Euler structures, which yield a choice of lifts of a cell decomposition of $X_1$ to its universal cover necessary to compute the torsion, and \emph{smooth} Euler structures, given by non-vanishing vector fields on $X_1$.
To our best knowledge, it is the only case this computation has been performed explicitly, since this correspondence is not very explicit in the direction we use. Namely, given a non-vanishing vector field on a 3-manifold $M$, it looks in general like a difficult task to find the corresponding combinatorial Euler structure.

Finally, the last part of the proof consists of an explicit computation of the Ruelle zeta function at $s=0$. It follows the ideas of \cite[Section 3]{fried1986fuchsian}, where Fried computed the modulus of each term contributing to $R(0;\rho)$ from the functional equation of the Selberg zeta function, in the case $\rho$ was unitary. In his case the Ruelle zeta function is real for $s$ real, hence he argued that he could forget the arguments of the terms occurring in the computation. Of course, in our case where $\rho$ is no longer unitary, these are exactly what we have to compute, and it turns out that after tedious computations (\cref{subsec:Fried}) they kind of miraculously cancel out with each other.

Let us finally comment that some arguments of the proofs (such as the trace formula (\cref{thm:TraceFormula}) and the functional equation (\cref{theo:Funct})) might be deduced from an analytic continuation argument from the known unitary case, if one knew that the unitary representations were Zarisky dense in the representation variety of the unit tangent bundle $X_1$. This kind of argument has been already used by several authors in various contexts, see Anantharaman (\cite{Anan}) and Braverman--Kappeler (\cite{BK}).

\medbreak
\paragraph{Organization of the paper.}
The paper is organized as follows.
In \cref{sec:Prel}, we introduce our geometric setting, recall some facts from the representation theory
of $\widetilde{\PSL(2,\R)}$ and fix notation.
In \cref{sec:Reidemeister}, we define and compute the Reidemeister--Turaev torsion of the unit tangent bundle $X_1$ of the hyperbolic orbifold $X$ in the Euler structure induced by the geodesic flow. In \cref{sec:STF}, we establish the Selberg trace formula for the twisted, non-self adjoint Bochner--Laplacian on $X$,
which will be the key point to prove \cref{theo:mainFried}.
In \cref{sec:Ruelle}, we introduce the twisted Ruelle and Selberg zeta functions 
and prove their meromorphic continuation to the whole complex plane.
Finally, we compute the value of the Ruelle zeta function at zero and
prove \cref{theo:mainFried}.

\paragraph{Acknowledgments.} The authors would like to thank Adrien Boulanger, Fran\c{c}ois Costantino, Nguyen Viet Dang, Pierre Dehornoy, Werner M\"{u}ller, Joan Porti and Louis-Hadrien Robert for helpful discussions and comments. We also thank the anonymous referees for their comments which led to a significant improvement of the paper. The first and third author are partially funded by the Research Training Group 2491 ``Fourier Analysis and Spectral Theory'', University of G\"{o}ttingen. The second and third author were partially supported by a research grant from the Villum Foundation (Grant No. 00025373).

\section{Preliminaries}
\label{sec:Prel}
We collect some basic information about compact hyperbolic orbisurfaces $X$ (\cref{subsec:Orbi}), their unit tangent bundles $X_1$ and their fundamental groups $\pi_1(X_1)$ (\cref{subsec:Unit}), and recall some representation theory of the universal cover of $\PSL(2,\R)$ (\cref{subsec:RepTheory}).

\subsection{Compact hyperbolic orbisurfaces}
\label{subsec:Orbi}
We consider the upper half plane
\[
\H^2:=\{z=x+iy:y>0\}.
\]
The group $G=\PSL(2,\R)$ acts on $\H^2$ by fractional linear transformations. This action is transitive and the stabilizer of $i\in\H^2$ is the maximal compact subgroup $K=\PSO(2)$, hence
\begin{equation*}
\H^2\cong G/K.
\end{equation*}
Let $\mathfrak{g}=~\mathfrak{sl}(2,\R)$ be the Lie algebra of $G$ and let $\mathfrak{g}=\mathfrak{k}\oplus\mathfrak{p}$ be the Cartan decomposition of $\mathfrak{g}$ with respect to the maximal compact subgroup $K$.
The restriction of the Killing form on $\mathfrak{g}$ to $\mathfrak{p}$ induces a $G$-invariant metric on $\H^2$ which is a multiple of the Poincar\'{e} metric
\begin{equation*}
ds^2=\frac{dx^2+dy^2}{y^2}.
\end{equation*}
Let $\Gamma\subseteq G$ be a cocompact Fuchsian group, i.e., a discrete subgroup of $G$ such that the quotient $X=\Gamma\backslash \H^2=\Gamma\backslash G/K$ is compact. Then, the Poincar\'{e} metric on~$\H^2$ induces a metric on $X$ that turns it into a compact hyperbolic orbisurface. Note that in this case, every non-trivial element in $\Gamma$ is either hyperbolic or elliptic.

Let $\widetilde{G}$ denote the universal cover of $G$ and write~$\widetilde{H}$ for the preimage of a subgroup $H\subseteq G$ under the universal covering map. Consider the following one-parameter families in $G$ (modulo $\pm I_2$):
$$ k_\theta=\begin{pmatrix}\cos\theta&\sin\theta\\-\sin\theta&\cos\theta\end{pmatrix}, \qquad a_t=\begin{pmatrix}e^{\frac{t}{2}}&0\\0&e^{-\frac{t}{2}}\end{pmatrix}, \qquad n_x=\begin{pmatrix}1&x\\0&1\end{pmatrix}, $$
where $\theta,t,x\in\R$. Denote by $\widetilde{k}_\theta$, $\widetilde{a}_t$ and $\widetilde{n}_x$ the unique lifts to the universal cover $\widetilde{G}$, turning
$$ \widetilde{K}=\{\widetilde{k}_\theta:\theta\in\R\}, \qquad \widetilde{A}=\{\widetilde{a}_t:t\in\R\} \qquad \mbox{and} \qquad \widetilde{N}=\{\widetilde{n}_x:x\in\R\} $$
into connected one-parameter subgroups of $\widetilde{G}$. Then the Iwasawa decomposition
$$ \widetilde{G} = \widetilde{N}\widetilde{A}\widetilde{K} $$
holds. Note that the center $\widetilde{Z}$ of $\widetilde{G}$ is given by
$$ \widetilde{Z} = \{\widetilde{k}_\theta:\theta\in\Z\pi\}. $$

Finally, we define two vector fields on $X_1 = \Gamma \backslash G$:
\begin{enumerate}
\item The (generator of the) geodesic flow $\mathfrak X$, which is induced by the infinitesimal action of the group $(a_t)_{t \in \R}$.
\item The rotation in the fiber vector field $\mathfrak X_\rot$, generated by the infinitesimal action of the group $(k_\theta)_{\theta \in \R}$.
\end{enumerate}
\subsection{The relation between $\pi_1(X_1)$ and $\pi_1(X)$}
\label{subsec:Unit}
The aim of this section is to shed light on the relations between the fundamental group of the orbifold surface $X$ and the fundamental group of its unit tangent bundle $X_1$. For this, we follow \cite[Section 2]{fried1986fuchsian}.

\medbreak

We first note that the unit tangent bundle of $\H^2$ naturally identifies with $G=\PSL(2,\R)$. Indeed the map
\begin{align*}
G &\to \H^2\\
g &\mapsto g\cdot \textrm i
\end{align*}
is a fibration, whose fiber is a circle $S^1$ of directions of unit tangent vectors.

It follows from the preceding discussion that the quotient $\Gamma \backslash G$ is a compact 3-manifold $X_1$, which we will (somewhat abusively) call the unit tangent bundle of $X$. It is a Seifert fibered 3-dimensional manifold, and this fibration induces an exact sequence 
$$1 \to \Z = \pi_1(\PSO(2)) \to \pi_1(X_1) \to \Gamma= \pi_1(X) \to 1$$
where $\pi_1(X)$ denotes the orbifold fundamental group of $X$.

In fact, a compact orbisurface $X$ must have finitely many conical points $x_1, \ldots, x_r$. Removing small disc neighborhoods $D_1, \ldots, D_r$ of those points, one gets a compact surface $\overline X$ with boundary $\partial \overline X=\{\partial D_1, \ldots, \partial D_r\}$ a union of disjoints circles, and with fundamental group
$$\pi_1(\overline X)= \langle a_1, b_1, \ldots, a_g, b_g, c_1, \ldots, c_r \mid \prod_i[a_i,b_i]\prod_j c_j = 1 \rangle.$$
Capping off the boundary components $\partial D_i$ by orbidiscs, one gets the following presentation for the fundamental group of the orbisurface:
$$\pi_1(X)= \langle a_1, b_1, \ldots, a_g, b_g, c_1, \ldots, c_r \mid \prod_i[a_i,b_i]\prod_j c_j = 1, \, c_j^{\nu_j}=1 \rangle,$$
for some non-zero natural integers $\nu_j, \, j= 1, \ldots, r$. Those integers are the order of the elliptic conjugacy classes in $\Gamma$ corresponding to the singular points $x_j$.

To compute the fundamental group of the unitary tangent bundle $X_1$ of $X$, let us denote by $\langle u\rangle$ the fundamental group of the fiber $\PSO(2)$ which identifies with the center of $\widetilde{G}$. The generator $u=\widetilde{k}_\pi$ identifies with a loop in $X_1$ corresponding to a complete clockwise rotation of unit vectors around the base point in $X$. It is proved in \cite[Section 2]{fried1986fuchsian} that one gets the presentation
\begin{multline}\label{presentation}
\pi_1(X_1) = \langle a_1, b_1, \ldots, a_g, b_g, c_1, \ldots, c_r, u \mid \\ [u,a_i] =1, [u,b_i]=1, [u,c_j]=1, \,  \prod_i[a_i,b_i]\prod_j c_j = u^{2g-2+r}, \, c_j^{\nu_j}=u \rangle.
\end{multline}

Now, let $\rho:\pi_1(X_1)\to\GL(V_\rho)$ be a finite-dimensional complex representation of $\pi_1(X_1)$. Since $u\in\pi_1(X_1)$ is central, $\rho(u)$ commutes with $\rho(\gamma)$ for all $\gamma\in\pi_1(X_1)$. If $\rho$ is irreducible, $\rho(u)$ must be a scalar multiple of the identity by Schur's Lemma, say $\rho(u)=\lambda\,I_n$. We show that $\lambda$ is in fact a root of unity. For this, let $N = \operatorname{lcm}(1, \nu_1, \ldots, \nu_r)$ be the least common multiple of $1$ and the orders of the elliptic conjugacy classes. By definition, the orbifold Euler characteristic $\chi(X)$ of $X$ is given by
$$ \chi(X) =  2-2g+\sum_{j=1}^r\left(\frac 1 {\nu_j}-1\right) \in \Q. $$

\begin{lem}\label{lem:CenterActionIrredPi1X1}
Let $\rho\colon \pi_1(X_1) \to \GL(V)$ be an irreducible representation of dimension $n$. Then, $\rho(u) =\lambda\, I_n$ with $\lambda^{Nn\chi(X)}=1$
\end{lem}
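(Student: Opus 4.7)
The plan is to combine Schur's lemma with the explicit relations in the presentation \eqref{presentation} of $\pi_1(X_1)$, exploited via determinants.

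First, since $t$ is central in $\pi_1(X_1)$ and $\rho$ is irreducible, Schur's lemma immediately gives $\rho(t) = \lambda\,I_n$ for some $\lambda \in \C^\times$. It remains to pin down the order of $\lambda$.

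Next, I would apply $\det\circ\rho$ to the two defining relations that involve $t$. Writing $\mu_j := \det\rho(c_j)$, the relations $c_j^{\nu_j} = t$ yield
\[
\mu_j^{\nu_j} = \lambda^n \qquad (j=1,\dots,s),
\]
while the long relation $\prod_i[a_i,b_i]\prod_j c_j = t^{2g-2+s}$, combined with the fact that $\det\rho([a_i,b_i]) = 1$ for each commutator, yields
\[
\prod_{j=1}^s \mu_j = \lambda^{n(2g-2+s)}.
\]

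The key algebraic step is then to raise the second identity to the $N$-th power so that each $\mu_j^N = (\mu_j^{\nu_j})^{N/\nu_j} = \lambda^{nN/\nu_j}$ can be expressed purely in terms of $\lambda$ (note $N/\nu_j \in \Z$ by definition of $N$). This gives
\[
\lambda^{nN\sum_{j=1}^s \tfrac{1}{\nu_j}} \;=\; \lambda^{nN(2g-2+s)},
\]
equivalently $\lambda^{nN(2g-2+s-\sum_j 1/\nu_j)} = 1$. Recognizing
\[
2g - 2 + s - \sum_{j=1}^s \tfrac{1}{\nu_j} = -\chi(X)
\]
from the stated formula for the orbifold Euler characteristic finishes the argument, producing $\lambda^{Nn\chi(X)} = 1$.

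There is no real obstacle here: the only subtlety is to make sure one raises to a high enough power so that all exponents are integers, which is precisely what the least common multiple $N$ is designed for; the rest is a straightforward bookkeeping of determinants.
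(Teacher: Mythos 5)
Your proposal is correct and follows essentially the same route as the paper: Schur's lemma gives $\rho(t)=\lambda I_n$, and then applying $\det\circ\rho$ to the relations $c_j^{\nu_j}=t$ and $\prod_i[a_i,b_i]\prod_j c_j=t^{2g-2+s}$ and raising to the $N$-th power yields $\lambda^{nN(\sum_j 1/\nu_j-(2g-2+s))}=1$, which is the paper's computation up to the harmless sign of the exponent.
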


\begin{proof}
Since $c_j^{\nu_j}=u$ we find
$\rho(c_j)^{\nu_j}= \lambda \, I_n$, in particular
$$\prod_{j=1}^r \rho(c_j)^N = \prod_{j=1}^r \lambda ^{N/\nu_j} \, I_n.$$
On the other hand, 
$$\prod_{j=1}^r \rho(c_j) = \lambda^{2g-2+r} \left(\prod_{i=1}^g[\rho(a_i),\rho(b_i)]\right)^{-1}.$$
Since $\det(\prod_{i=1}^g[\rho(a_i),\rho(b_i)]) = 1$, we deduce 
$\lambda^{Nn \left(\sum_{j=1}^r 1/\nu_j - (2g-2+r)\right)} = 1$
and the lemma follows.
\end{proof}

\subsection{Representation theory of $\widetilde{\PSL(2,\R)}$}
\label{subsec:RepTheory}
We briefly recall the principal series and the (relative) discrete series of the universal covering group of $\PSL(2,\R)$,  following \cite[Section 1]{Hof94}.

Let $\widetilde{M}=\widetilde{Z}$ denote the center of $\widetilde{G}$, then $\widetilde{N}\widetilde{A}\widetilde{M}$ is a parabolic subgroup of $\widetilde{G}$. The unitary dual of $\widetilde{K}\simeq\R$ is comprised of the unitary characters $\tau_m$, $m\in\R$, defined by
\begin{equation}
	\tau_m(\widetilde{k}_\theta) = e^{im\theta} \qquad (\theta\in\R).\label{eq:DefTauM}
\end{equation}
The restriction $\sigma_\varepsilon$ of $\tau_m$ to $\widetilde{M}$ only depends on $\varepsilon=m+2\Z\in\R/2\Z$, and the unitary dual of $\widetilde{M}$ is given by all $\sigma_\varepsilon$, $\varepsilon\in\R/2\Z$.

For $\sigma=\sigma_\varepsilon$, $\varepsilon\in\R/2\Z$, and $s\in\C$ we form the principal series representation $\pi_{\sigma,s}$ of $\widetilde{\PSL(2,\R)}$ on
$$ I_{\sigma,s} = \{f\in C^\infty(\widetilde{G}):f(na_tmg)=\sigma(m)e^{st}f(g)\}, $$
acting by right translation. The $\widetilde{K}$-types in $I_{\sigma,s}$ are spanned by the functions $\phi_m$ with $m+2\Z=\varepsilon$, where
$$ \phi_m(na_tk) = e^{st}\tau_m(k) \qquad (n\in\widetilde{N},t\in\R,k\in\widetilde{K}). $$

Note that the Casimir element $\Omega=\frac{1}{4}(H^2+2EF+2FE)$ with
$$ H=\begin{pmatrix}1&0\\0&-1\end{pmatrix}, \qquad E=\begin{pmatrix}0&1\\0&0\end{pmatrix}, \qquad F=\begin{pmatrix}0&0\\1&0\end{pmatrix} $$
acts in $\pi_{\sigma,s}$ by $s(s-1)\Id$.

\paragraph{Unitary principal series.} For $s=\frac{1}{2}+i\lambda$, $\lambda\in\R$, the representation $\pi_{\sigma,s}$ extends to a unitary representation on the Hilbert space
$$ \left\{f:\widetilde{G}\to\C:f(na_tmg)=\sigma(m)e^{(i\lambda+\frac{1}{2})t}f(g),\int_{\widetilde{M}\backslash\widetilde{K}}|f(k)|^2\,dk<\infty\right\}, $$
the \emph{unitary principal series}. We denote the distribution character of this representation by $\Theta_{\sigma,\lambda}$.

\paragraph{(Relative) discrete series.} For $m>0$, the $\widetilde{K}$-types $\phi_{m'}$, $m'\in\pm\{m,m+2,m+4,\ldots\}$, span an $\sl(2,\C)$-invariant subspace of $I_{\sigma_\varepsilon,\frac{m}{2}}$ where $\varepsilon=\pm m+2\Z\in\R/2\Z$, which can be completed to a Hilbert space of a unitary representation of $\widetilde{G}$, the \emph{(relative) discrete series}. We write $\Theta_{\pm m}$ for the distribution character of this representation. Details about the invariant inner product on the (relative) discrete series can be found in \cite[Section 1]{Hof94}.

\section{Reidemeister torsion}
\label{sec:Reidemeister}
In this section, we compute the Reidemeister--Turaev torsion of the unit tangent bundle~$X_1$ of a hyperbolic orbisurface $X$ for the Euler structure given by the geodesic flow. In \cref{subsec:Defi}, we define  the Reidemeister--Turaev torsion. In \cref{subsec:Euler}, we describe the two equivalent notions of Euler structures introduced by Turaev, we introduce the sign-refinement of the torsion induced by a homology orientation, and we discuss thoroughly the example of the solid torus.  We compute explicitly the Reidemeister-Turaev torsion of $X_1$ in the Euler structure given by the geodesic flow in \cref{subsec:Torsion}, and determine its sign.

\subsection{Twisted homology, Reidemeister--Turaev torsion and Euler structures}
\label{subsec:Defi}
In this section, we define the twisted homology and Reidemeister torsion of a finite CW-complex $W$ with a finite dimensional representation $\rho \colon \pi_1(W)  \to \GL(V_\rho)$ and an Euler structure $\mathfrak e$. References include \cite{Nicolaescu,Turaev,Turaev_book}.

\subsubsection{Twisted homology}

We will denote the fundamental group $\pi_1(W)$ by $\pi$. Let $\widetilde W$ be the universal cover of $W$. It has the structure of an (infinite) CW-complex, whose cells can be listed as follows:
if $\{c^i_1, \ldots, c^i_{k_i}\}$ are the $i$-dimensional cells of $W$, then $\{\pi\cdot \widetilde c^i_1, \ldots,\pi \cdot \widetilde c^i_{k_i}\}$ is the list of the $i$-dimensional cells of $\widetilde W$, where $\pi \cdot \widetilde c^i_j$ denotes the set $\{\gamma \cdot \widetilde c^i_j \mid \gamma \in \pi\}$, see an example in \cref{circle}.

\begin{figure}[h]
\begin{center}
\def\svgwidth{0.7\columnwidth}
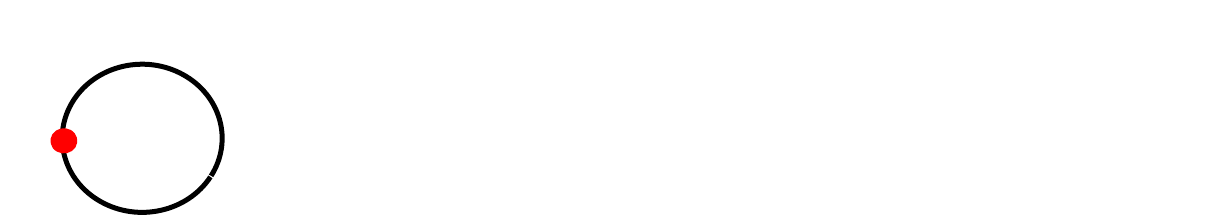
\caption{\label{circle} In this example, the CW complex $W$ is a circle, with one $0$-cell $c^0$ and one $1$-cell $c^1$. Its universal cover $\widetilde W$ is the real line, with the action of the fundamental group $\pi_1(W) = \langle u \rangle$. }
\end{center}
\end{figure}
It endows $C_*(\widetilde W)$ with the structure of a complex of $\mathbb Z[\pi]$-modules of finite rank, with a basis given by a choice of a lift $\widetilde c^i_j$ for each cell $c^i_j \in C_*(W)$.
Define the complex $$C_*(W, V_\rho) = V_\rho \otimes_{\mathbb Z[\pi]} C_*(\widetilde W).$$ Given $\{v_1, \ldots, v_n\}$ a basis of $V_\rho$, the set $\{ v_1 \otimes \widetilde c^i_1, v_2 \otimes \widetilde c^i_1 \ldots, v_{n-1} \otimes \widetilde c^i_{k_i}, v_n \otimes \widetilde c^i_{k_i}\}$ gives a basis of the vector space $C_i(W, V_\rho)$ for each $i$, and the boundary operator is given by $\textrm{id} \otimes \partial$.

\begin{example}
\label{ex:circle}
In the example of \cref{circle}, given a representation $\rho\colon \pi_1(\mathbb S^1) \to \GL(V_\rho)$, the complex is just
$$C_1(\mathbb S^1, V_\rho) \simeq V_\rho \to V_\rho \simeq C_0(\mathbb S^1, V_\rho)$$
and the boundary map is $\partial (v \otimes \widetilde c^1) = v \otimes (u-1)\cdot \widetilde c^0 = (\rho(u)-I_n) v \otimes \widetilde c^0$. In the case $\det(\rho(u)-I_n) \neq 0$, the boundary map $\partial \colon C_1(\mathbb S^1, V_\rho) \to C_0(\mathbb S^1, V_\rho)$ is an isomorphism. The twisted homology vector spaces $H_*(\mathbb S^1, V_\rho)$ are then trivial.
\end{example}

\begin{rmrk}
In  \cref{ex:circle},  the identification $C_i(\mathbb S^1, V_\rho) \simeq V_\rho$ is not canonical, it depends on the choice of a lift of the cell $c^i$. Different choices would modify the boundary map $\partial$ by a factor $\rho(u^m)$, for some $m \in \Z$. On the other hand, the homology of this complex does not depend on the choices.
\end{rmrk}

For a complex $C_*$, with boundary maps $\partial_*$, we denote by $Z_i = \ker \partial_i$, by $B_i = \im \partial_{i+1}$. The homology vector spaces of the complex are the spaces $H_i = Z_i/ B_i$. When all the spaces $H_i$ are trivial, as in \cref{ex:circle}, we say that the complex $C_*$ is \emph{acyclic}.

\subsubsection{Reidemeister torsion of an acyclic based complex}

Given an acyclic based complex $(C_*, \bf{c}^*)$ of vector spaces over a field $\K$, where for each $i$, the set $\mathbf{c}^i$ is a basis of the vector space $C_i$, the Reidemeister torsion is defined as follows.
The boundary maps yield exact sequences
$$ 0 \to Z_i \to C_i \xrightarrow{\partial_i} B_{i-1} \to 0.$$
Because the complex is acyclic, one has $B_i = Z_i$ for each $i$. If one picks arbitrary bases $\mathbf{b}^i$ of the spaces $B_i$ and arbitrary lifts $\mathbf{\overline b}^{i-1}$ in $C_{i}$ satisfying $\partial_i(\mathbf{\overline b}^{i-1}) =\mathbf{b}^{i-1}$, one obtains new bases $\mathbf{b}^i \sqcup \mathbf{\overline b}^{i-1}$ of $C_i$ for each $i$.\\
Let $\mathbf{b}^i \sqcup \mathbf{\overline b}^{i-1} \colon \mathbf{c}^i$ be the matrix which maps the basis $\mathbf c^i$ onto the basis $\mathbf{b}^i \sqcup \mathbf{\overline b}^{i-1}$.
We denote by $[\mathbf{b}^i \sqcup \mathbf{\overline b}^{i-1} \colon \mathbf{c}^i]$ the determinant of the change of basis matrix $\mathbf{b}^i \sqcup \mathbf{\overline b}^{i-1} \colon \mathbf{c}^i$, the Reidemeister torsion is defined as the alternating product
$$\tor(C_*, \mathbf{c}^*) = \prod_i [\mathbf{b}^i \sqcup \mathbf{\overline b}^{i-1} \colon \mathbf{c}^i]^{(-1)^i} \in \K^* / \{\pm 1\}. $$
It is defined up to sign, it does depend on the basis $\mathbf{c}^*$, but not on the choices~$\mathbf{b}^*$ nor on the lifts $\mathbf{\overline b}^*$.

\begin{rmrk}
\label{rmk:convention}
There is no standard convention regarding the choice of the torsion or its inverse. Namely, some authors use as a definition for the torsion the alternating product 
$$\prod_i [\mathbf{b}^i \sqcup \mathbf{\overline b}^{i-1} \colon \mathbf{c}^i]^{(-1)^{i+1}}.$$
At the end one arrives with the inverse of the quantity we defined here. It is the case in \cite{fried1986fuchsian}, for instance.
\end{rmrk}
\subsubsection{Reidemeister torsion of a CW complex}

In the case of a CW-complex $W$ with a representation $\rho \colon \pi_1(W) \to \GL(V_\rho)$, if the complex $C_*(W, V_\rho)$ is acyclic, we will say that $\rho$ is acyclic. One can compute the Reidemeister torsion as follows.\\
First, one needs to make a choice of lift for each cell of $W$ in $\widetilde W$. Then take an arbitrary basis of the vector space $V_\rho$, all this together yields a basis $\mathbf c^\ast$ of the complex $C_*(W, V_\rho)$. Then the Reidemeister torsion
$$\tor(W, \rho, \mathbf c^*)$$ 
depends on the choice of the lifts, as emphasized in the notation. This dependance restricts in a dependance of an \emph{Euler structure}, which we will explain now.

\subsection{Combinatorial and smooth Euler structures}
\label{subsec:Euler}
\subsubsection{Combinatorial Euler structure}
In \cite{Turaev}, Turaev investigated the concept of Euler structure. The main achievement is to show that Euler structures can be equivalently defined in two different ways.

An Euler chain in a CW complex $W$ is a singular one-chain $c$ in $W$, whose boundary is of the form
$$\partial c = \sum_{\sigma \in W} (-1)^{\dim \sigma} [w_\sigma],$$
where $w_\sigma$ is a point in $\sigma$. Note that Euler chains exist if and only if $\chi(W) = 0$, since the singular $0$-chain $\sum_{\sigma \in W} (-1)^{\dim \sigma} [w_\sigma]$ is then in the kernel of the augmentation map.

Take $c'$ another Euler chain with $\partial c'= \sum_\sigma (-1)^{\dim\sigma}[w_\sigma']$, and some arbitrary paths $\gamma_\sigma$ in each cell $\sigma$ linking $w_\sigma'$ to $w_\sigma$. Then note that the element
$$ \beta = c - c' - \sum_\sigma(-1)^{\dim\sigma}\gamma_\sigma $$
is a singular $1$-chain with $\partial\beta = 0$ and we say that $c$ and $c'$ are equivalent if we have $[\beta] = 0 \in H_1(W,\Z)$ (the first singular homology group). There is a natural (free transitive) action of $H_1(W)$ on the set of combinatorial Euler structures.

\begin{rmrk}
If one fixes a $0$-cell $x$ in $W$, there is a special kind of Euler chains called \emph{spider} by Turaev. These are star-shaped 1-chains, with $x$ as a center and paths from $x$ to any cell $\sigma$, oriented accordingly: from $x$ to $w_\sigma$ if $\dim \sigma$ is even, from $w_\sigma$ to $x$ if $\dim \sigma$ is odd. It is not difficult to see that any Euler chain is homologous to a spider.

A spider gives a path from $x$ to any cell $\sigma$ in $W$. In turn, it defines a lift $\widetilde \sigma$ of each cell
$\sigma$ to the universal cover $\widetilde W$ of $W$, and allows to define unambiguously the Reidemeister--Turaev torsion as in \cref{subsec:Defi}. Note that two homologous spiders will not necessarily define the same lifts in $\widetilde W$, nevertheless the value of the torsion will not be affected, since it is multiplied by the determinant of an element of the commutator subgroup $\ker(\pi_1(W) \to H_1(W))$. In fact, an Euler structure defines unambiguously a lift of each cell of $W$ to its maximal abelian cover $\overline W$.
\end{rmrk}

\begin{example}\label{ex:circle2}
For the circle, with the choice of lifts $\widetilde c^0$ and $\widetilde c^1$ we made in  \cref{circle}, the torsion equals $\pm \frac 1 {\det(\rho(u)-I_n)}$. But if one moves the lift $\widetilde c^1$ to $u^m \cdot \widetilde c^1$, then the torsion will be multiplied by $\det \rho(u^m)$. 
On the other hand, if one translates {simultaneously} $\widetilde c^0$ and $\widetilde c^1$, the torsion is not affected.
What we computed is a natural choice of torsion, for a natural induced Euler structure: one chooses \emph{any} lift $\widetilde c^0$ of the base point, and then take $\widetilde c^1$ as on \cref{circle}, taking the edge that goes out of $\widetilde c^0$ following the orientation. Any such choice induces the same Euler structure.
We denote this Euler structure on the circle by $\mathfrak e_{\circ}$.
\end{example}

\subsubsection{Smooth Euler structure}
On the other hand, a \emph{smooth} Euler structure is a homology class of nowhere vanishing vector fields. Two nowhere vanishing vector fields $\mathfrak X$ and $\mathfrak X'$ on a 3-manifold $X$ are homologous if there exists a ball $D \subset X$ such that $\mathfrak X$ and $\mathfrak X'$ are homotopic in $X \setminus D$ as nowhere vanishing vector fields. There is a natural action of $H_1(X)$ on the set of smooth Euler structures by Reeb surgery, see \cite[Section I.4.2]{Turaev_tors}.

The obstruction for two vector fields $\mathfrak X$ and $\mathfrak X'$ of being homologous is the Chern--Simons class $cs(\mathfrak X, \mathfrak X') \in H_1(X)$.
It can be defined as follows: denote by $p \colon X \times [0,1] \to X$ the projection, and choose $\mathbb X$ a smooth section of the bundle $p^*TX \to X \times [0,1]$ which provides a homotopy between $\mathfrak X = \mathbb X\vert_{X\times\{0\}}$
and $\mathfrak X' = \mathbb X\vert_{X\times\{1\}}$ and which is transverse to the zero section. The intersection of $\mathbb X$ with the zero section projects onto a one-dimensional submanifold in $X$, whose homology class is $cs(\mathfrak X, \mathfrak X')$. It can be shown that it only depends on the homology classes of $\mathfrak X$ and $\mathfrak X'$.

It turns out that both sets of combinatorial and smooth Euler structures are affine sets on $H_1(X)$, and Turaev shows in \cite[Section 6]{Turaev} that there is an $H_1(X)$-equivariant isomorphism from one to the other. 
The map from the set of combinatorial Euler structures into the set of smooth Euler structures is constructed as follows (more details in \cref{subsubsec:Euler}): first there is a ``Stiefel'' vector field $\mathfrak X$ associated to a CW complex. 
Singularities of this vector field are (a choice of) points in the interior of each cell, and the vector field pushes each higher dimensional cell onto lower dimensional ones.\\
Then given a combinatorial Euler structure, one can modify the Stiefel vector field along an associated Euler chain, so that the resulting vector field is non-singular, and yields a smooth Euler structure. In fact the Euler structure can be chosen to be a spider, and a neighborhood of this spider is homeomorphic to a ball $D \subset X$. One can see that there is an essentially unique way to extend the non-singular vector field given by $\mathfrak X$ on $X \setminus D$. Indeed, the Euler structure induced by $\mathfrak X$ depends only on $\mathfrak X$ on $X \setminus D$.\\
Turaev shows equivariance of this map under the action of $H_1(M)$, which implies its bijectivity.

\subsubsection{Reidemeister--Turaev torsion}
\label{subsec:RTT}
For a CW complex $W$ with an acyclic representation $\rho\colon \pi_1(X) \to \GL(V_\rho)$ and an Euler structure $\mathfrak e$, we define the Reidemeister--Turaev torsion 
$$\tor(W, \rho, \mathfrak e) \in \C/\{\pm 1\}$$
as the torsion of the complex $C_*(W, V_\rho)$ in the basis $\mathbf{c}^*$ given by the choice of lifts of cells induced by any spider in the class $\mathfrak e$.
For a compact manifold $M$, the torsion is a well-defined topological invariant, defined up to sign: it does not depend on the choice of a cell decomposition.

\subsubsection{Sign-refined torsion}
The fact that the torsion defined in \cref{subsec:RTT} is independent of the given choices is not too difficult to see. For example, the choice of the basis of $V_\rho$ does not affect it, since changing a basis $\mathbf v$ for a new basis $\mathbf v'$ of $V_\rho$ would change the torsion by a factor $[\mathbf v' \colon \mathbf v]^{\chi(W)}$, but the Euler characteristic of $W$ must vanish from the acyclicity assumption.

A problem that one cannot avoid facing is the sign: the basis $\mathbf c^*$ given by a choice of lift of the cells comes \emph{ordered}, and a change of the order of the cells will result into a change of sign, at least if $\dim V_\rho$ is odd. More precisely, if one permutes two $k$-cells $c^k_i$ and $c^k_j$, then the Euler structure $\mathfrak e$ will give a new basis $\mathbf d^*$, and the torsion $\tor(W, \rho, \mathbf d^*, \mathfrak e)$ will differ from $\tor(W, \rho, \mathbf c^*, \mathfrak e)$ by a sign $(-1)^{|i-j| \dim V_\rho}$. This is why so far we have only defined the torsion up to sign.

To remedy this sign ambiguity, following Turaev (\cite[Chapter III, Section 18]{Turaev_book}), we introduce a correcting term $\check \tau(W, \omega)$ where $\omega$ is a \emph{homology orientation} of $W$, namely an orientation of the vector space $H_*(W, \R) = \bigoplus_i H_i(W, \R)$. 

To define this correcting term, we chose a basis $\mathbf h$ of $H_*(W, \R)$ which is \emph{positively oriented} with respect to $\omega$. We already have bases $\mathbf c^i$ of $C_i(W, \R)$, and we chose arbitrary bases $\mathbf b^i$ of $B_i(W, \R)$. Then define
$$\check \tau(W, \omega) = (-1)^{N(C)} \, \tau(W, h) = (-1)^{N(C)} \prod_i [\mathbf b^i \sqcup \mathbf h^i \sqcup \mathbf{\overline b}^{i-1} \colon \mathbf{c}^i]^{(-1)^i}$$
where $N(C) = \sum_{i=0}^3 \left(\sum_{r=0}^i \dim C_i(W, \R)\right)\left(\sum_{r=0}^i \dim H_i(W, \R)\right)$.
Finally, define the \emph{sign refined Reidemeister-Turaev torsion} as 
$$\tor(W,\rho, \mathfrak e, \omega) = \sign(\check \tau(W, \omega))^{\dim V_\rho} \tor(W, \rho, \mathfrak e).$$
One can now see that changing the order of the cells of $W$ does not affect the torsion, since the sign change will also appear in $\sign(\check \tau(W, \omega))^{\dim V_\rho}$. Let us comment that the additional sign refinement $(-1)^{N(C)}$ is needed in order to make the formula for the torsion invariant by elementary expansions and collapses, what we will explain in the next subsection.

\begin{rmrk}
In classical textbooks such as \cite{Turaev_book, Turaev_tors}, Turaev deals with one-dimensional representations when defining the sign-refined torsion, hence there is no $\dim V_\rho$ as an exponent for the sign of the correcting term. As we already mentioned, the sign determination of the torsion only poses problem when $\dim V_\rho$ is odd, and in this case all the arguments of Turaev extend in a straightforward way. When $\dim V_\rho$ is even, there is nothing to do.
\end{rmrk}

\subsubsection{Elementary collapse, elementary expansion}

For any pair $(c^k, c^{k+1})$ of cells of dimensions $(k,k+1)$ in a CW complex $W$, such that $c^k$ appears only in the boundary of $c^{k+1}$, and the boundary map $\partial_{k+1}$ restricts to a homeomorphism $\partial_{k+1} \colon \partial_{k+1}^{-1}\{c^k\} \xrightarrow{\sim}c^k$, one can remove the pair $(c^k, c^{k+1})$ and reduce the complexity of $W$. This induces a deformation retract $W \to W' = W \setminus \{c^k, c^{k+1}\}$, which is called an \emph{elementary collapse}. An Euler structure $\mathfrak e$ on $W$ obviously induces an Euler structure $\mathfrak e'$ on $W'$. The reverse operation is called \emph{elementary expansion}.
Now we describe how  elementary collapses and elementary expansions act on the Reidemeister--Turaev torsion.

\begin{lem}
\label{lem:torcollapse}
Let $W$ be a CW complex with an acyclic representation $\rho\colon \pi_1(X) \to \GL(V_\rho)$, fix an Euler structure $\mathfrak e$ and a homology orientation $\omega$. Denoting by $\overline c^{k+1}, \overline c^k$ the lifts of the cells $c^{k+1}, c^k$ in the maximal abelian cover $\overline W$ of $W$ induced by $\mathfrak e$,  let $h$ be the unique element  in  $H_1(W)$ such that $h \overline c^k$ appears in the boundary of $\overline c^{k+1}$. Then the following holds:
$$\tor(W, \rho, \mathfrak e, \omega) = (\det \rho(h))^{(-1)^k} \tor(W', \rho, \mathfrak e', \omega).$$
\end{lem}
\begin{proof}
Let $\{v_1, \ldots, v_n\}$ be  basis of $V_\rho$.
Pick  bases $\mathbf{b}^i$ of the spaces $B_i(W,V_\rho)$ such that $\widetilde c^{k+1} \otimes v_1, \ldots \widetilde c^{k+1} \otimes v_n$ are basis vectors given by some lift $\mathbf{\overline b}^{k}$ of $\mathbf{b}^k$. Since these are also basis vectors of $C_{k+1}(W, V_\rho)$, up to some permutation the change of basis matrix $\mathbf{b}^{k+1} \sqcup \mathbf{\overline b}^{k} \colon \mathbf{c}^{k+1}$ has the form 
$$\mathbf{b}^{k+1} \sqcup \mathbf{\overline b}^{k} \colon \mathbf{c}^{k+1} = \begin{pmatrix} I_n & 0 \\ 0 & \ast \end{pmatrix}.$$
In particular its determinant remains unchanged after the elementary collapse $W \to W'$.

Now the change of basis matrix $\mathbf{b}^{k} \sqcup \mathbf{\overline b}^{k-1} \colon \mathbf{c}^{k}$ has the form, up to permutation, 
$$\mathbf{b}^{k} \sqcup \mathbf{\overline b}^{k-1} \colon \mathbf{c}^{k} = \begin{pmatrix} \rho(h \gamma) & \ast \\ 0 & \ast \end{pmatrix}$$
for some element $\gamma$ in the commutator subgroup  $[\pi_1 W, \pi_1 W]$. In particular its determinant changes by a factor $(\det \rho(h))^{(-1)^k}$. Since no other determinants are affected by the elementary collapse, the lemma is proved, up to sign.
The fact that the elementary collapse does not affect the sign of the refined torsion is proved in \cite[Lemma 18.4]{Turaev_book}.
\end{proof}

\subsubsection{Example of the solid torus}
\label{subsubsec:extorus}
Let $N = D^2 \times S^1$ be a 3-manifold homeomorphic to a solid torus. We choose a generator $h$ of $H_1(N) \simeq \Z$ (note that it induces a homology orientation on $N$). Let $\mathfrak X$ be the vector field on $N$ everywhere tangent to the circles $\{x \} \times S^1$, directed positively with respect to $h$.

In this subsection we will compute the torsion of $N$ with respect to the Euler structure $\mathfrak e_+$ induced by $\mathfrak X$. 

\paragraph{A CW structure on $N$.}
To start we choose a CW-decomposition for $N$. Take the obvious CW-decomposition of $D^2$ with one 0-cell, one 1-cell and one 2-cell. Then we take the product with the $S^1$ factor (seen as a CW-complex with one 0-cell and one 1-cell). We obtain (see also \cref{fig:solidtorus}):
\begin{itemize}
\item One 0-cell $e_0$,
\item Two 1-cells: $e_1^1$ which is the product of the 1-cell of $D^2$ with the 0-cell of $S^1$, and $e_1^2$, product of the 0-cell of $D^2$ with the 1-cell of $S^1$,
\item Two 2-cells: $e_2^1$ which is the product of the 2-cell of $D^2$ with the 0-cell of $S^1$, and $e_2^2$, product of the 1-cell of $D^2$ with the 1-cell of $S^1$,
\item One 3-cell $e_3$, product of the  2-cell of $D^2$ with the 1-cell of $S^1$.
\end{itemize}

\begin{figure}[h]
\begin{center}
\def\svgwidth{0.7\columnwidth}
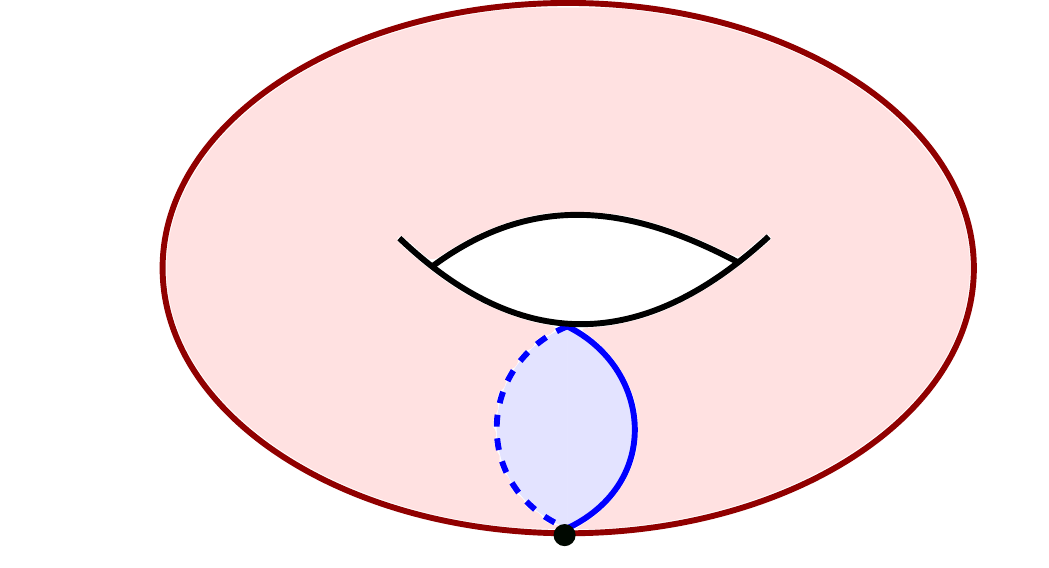
\caption{\label{fig:solidtorus} The solid torus $N$, with a cell decomposition. The 0-cell $e_0$ is below. The two 1-cells $e_1^1$ and $e_1^2$ are blue and red respectively. Then the two 2-cells are $e_2^1$ (the light blue internal disk) and $e_2^2$ (the disk given by the complement of $e_1^1 \cup e_1^2$ in the toral boundary $\partial N$ of $N$). The 3-cell $e_3$ (light red) is the complement of the disk $e_2^1$ in the interior of $N$. In green, a spider inducing  the same Euler structure as the vector field $\mathfrak X$ (in purple).}
\end{center}
\end{figure}

\paragraph{Euler structure on $N$.}
As explained by Turaev in \cite[Section VI.2.4]{Turaev_tors}, there are two natural Euler structures $\mathfrak e_{\pm}$. They are uniquely determined by the fact that $\mathfrak e_+ = h \mathfrak e_-$. We claim that the Euler structure induced by $\mathfrak X$ is $\mathfrak e_+$. It can be seen as follows: first one needs to make $\mathfrak X$ transverse to the boundary $\partial N$, pointing outward, by a homotopy. Then one can also make $-\mathfrak X$ transverse, outward pointing. Then the claim follows since it can be checked that performing a Reeb surgery as in \cite[Section I.4.2]{Turaev_tors} on $- \mathfrak X$ along $h$ yields $\mathfrak X$.

Now we describe the combinatorial Euler structure corresponding to $\mathfrak e_+$ in terms of a spider, which we have also drawn in \cref{fig:solidtorus}. This spider can be described as follows: it is centered at $e_0$, and there is a path (oriented toward $e_0$ if its other extremity has odd dimension, and out of $e_0$ if its other extremity has even dimension) joining $e_0$ to a chosen point in any other cells of the decomposition of $N$. We go
\begin{itemize}
\item from $e_1^1$ to $e_0$ following $e_1^1$ in the opposite orientation, 
\item from $e_1^2$ to $e_0$ following $e_1^2$ in the opposite orientation,
\item from $e_0$ to $e_2^1$ in $e_2^1$,
\item from $e_0$ to $e_2^2$ in $e_2^1$, going in the direction given by $h$,
\item from $e_3$ to $e_0$ in $e_3$, going in the opposite of the direction given by $h$.
\end{itemize}

\begin{lem}
\label{lem:spid}
The spider described above induces the Euler structure $\mathfrak e_+$.
\end{lem}

\begin{proof}
To see that this spider induces the Euler structure $\mathfrak e_+$, we use the fact that there is an involution on the set of combinatorial Euler structure, defined as follows:
take $\mathfrak e$ a combinatorial Euler structure, and realize it as a spider centered at $e_0$. Draw this spider in the maximal abelian cover $\overline N \simeq D^2 \times \R$, and then reflects $\overline N$ along the disk $D^2 \times \{ 0 \}$. It results a new spider, hence a new Euler structure $-\mathfrak e$. This involution corresponds to $\mathfrak X \mapsto -\mathfrak X$ through the $H_1(N)$ equivariant bijection between combinatorial and smooth Euler structures, because reversing the component of $\mathfrak X$ tangent to the $D^2 \times \{\ast \}$ disks yields a homotopic vector field, and this last procedure is precisely what is needed for $-\mathfrak X$ being also transverse, outward pointing, to the boundary.

We are reduced to prove that the spider described above induces an Euler structure $\mathfrak e$ satisfying $\mathfrak e = h \cdot (-\mathfrak e)$. Now $-\mathfrak e$ differs from $\mathfrak e$ by the following modifications:
the three paths linking $e_0$ to $e_1^2, e_2^2$ and $e_3$ go now symmetrically along the direction induced by the reversed orientation of the circle $S^1$. The difference between $\mathfrak e$ and $-\mathfrak e$ is hence $h-h+h = h$, and it proves the lemma. 
\end{proof}

\paragraph{The Reidemeister--Turaev torsion of $(N,\rho,\mathfrak e_+, \omega)$}
Now we compute the torsion of $N$, given a representation $\rho \colon \pi_1(N) \to \GL(V_\rho)$ (in other word, a matrix $H$ associated to the element $h \in H_1(N)$). Recall that $\omega$ is the homology orientation associated with the choice of a generator $h$ of $H_1(N)$.

 We won't proceed by computing directly the torsion of the CW complex described above, because this strategy would become intractable later, when we will deal with manifolds of the form $\Sigma \times S^1$ for more complicated surfaces $\Sigma$. Instead, we will proceed by \emph{elementary collapse}.

Consider the cells $\{e_3, e_2^2\}$. The new CW-complex $W' = W \setminus \{e_3, e_2^2\}$ is an elementary collapse of $W$. Now $W' \to W'' = W' \setminus \{e_2^1, e_1^1\}$ is also an elementary collapse, and we can compute the torsion on the CW-complex $\{e_0, e_1^2\}$, which is topologically a circle. 
Note that for the Euler structure $\mathfrak e_+$, one has $\partial_3 \widetilde e_3 = \widetilde e_2^2 + (1-h) \widetilde e_2^1$, and $\partial_2 \widetilde e_2^1 = \widetilde e_1^1$. Hence applying \cref{lem:torcollapse}, one can compute $\tor(N, \rho, \mathfrak e, \omega)$ as the torsion $\tor(W'', \rho, \mathfrak e_+'', \omega)$ of the core circle of $N$, as in \cref{ex:circle2}. Noticing that the induced Euler structure $\mathfrak e_+''$ is nothing but what we had denoted by $\mathfrak e_\circ$ there, we have shown:
$$\tor(N,\rho, \mathfrak e_+) =  \pm \frac 1 {\det(H -I_n)}.$$

Now let us compute the sign. Note that $H_*(W'', \R) \simeq C_*(W'', \R)$ generated by $e_0$ and $e_1^2$. Moreover one has $\partial e_1^2 = 0$. Hence 
$$\check \tau(W'', \omega)= (-1)^{(1^2 + 2^2)} \frac{[\mathbf b^0 \sqcup \mathbf h^0 \colon \mathbf c^0]}{[\mathbf b^1 \sqcup \mathbf h^1 \sqcup \mathbf{\overline b}^0 \colon \mathbf c^1]}\\
=- \frac {[e_0\colon e_0]}{[e_1 \colon e_1]} = -1.
$$
Inserting the correction $(-1)^{\dim V_\rho}$ in the torsion, we get
\begin{prop}
We have
\label{prop:torstorus}
$$\tor(N,\rho, \mathfrak e_+, \omega) =  \frac 1 {\det(I_n - H)}.$$
\end{prop}

\subsection{The torsion of the unit tangent bundle}
\label{subsec:Torsion}
Let $X$
be a surface of genus $g$ with $r$ orbifold singularities, such that $2g-2+r>0$. Let $X_1$
be the unit tangent bundle of the orbisurface $X$. We denote by $c_1, \ldots, c_r\in\pi_1(X_1)$ the elliptic elements given by loops in $X$ around the singularities.

The aim of this section is to prove the following theorem.
\begin{thm}
\label{prop:torsion}
Let $\rho \colon \pi_1(X_1) \to \GL(V_\rho)$ be an irreducible representation of dimension $n$, with $\rho(u) \neq I_n$. Then $\rho$ is acyclic. For any hyperbolic metric on $X$, the geodesic flow on $X_1$ induces an Euler structure $\mathfrak e_{\geod}$ for which
$$\tor(X_1, \rho, \mathfrak e_{\geod}, \omega^1) =  \frac{\det(I_n - \rho(u))^{2g+r-2}}{\prod_{j=1}^r \det(I_n - \rho(c_j))}$$
\end{thm}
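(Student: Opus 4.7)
The plan is to build a CW decomposition of $X_1$ from the presentation \eqref{presentation}, compute the torsion of the resulting twisted chain complex via Fox calculus, and identify the geodesic-flow Euler structure using the smooth-to-combinatorial correspondence of \cref{subsec:Euler}.

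\emph{Acyclicity.} Since $t$ is central and $\rho$ is irreducible, Schur's lemma gives $\rho(t)=\lambda I_n$, and the hypothesis $\rho(t)\neq I_n$ forces $\lambda\neq 1$. From $c_j^{\nu_j}=t$ the eigenvalues of $\rho(c_j)$ are $\nu_j$-th roots of $\lambda\neq 1$, hence $\neq 1$, so $\rho(c_j)-I_n$ is invertible as well. A Leray--Serre argument for the Seifert fibration $X_1\to X$ then yields $H_*(X_1;V_\rho)=0$: the fiber cohomology over any point, regular or singular, involves only the invertible monodromy $\rho(t)-I_n$ or $\rho(c_j)-I_n$ and so vanishes.

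\emph{Torsion computation.} Take a CW decomposition of $X_1$ coming from \eqref{presentation}: one vertex, the generators as 1-cells, the relations as 2-cells, and $s+1$ three-cells, namely one cell $D_j$ inside the tubular neighborhood of each singular fiber (glued along the meridional disk $P_j=c_j^{\nu_j}t^{-1}$ and the torus cell $[t,c_j]$) together with a main chamber $D_0$. Fox calculus computes every boundary map. The key observation is that each 2-cell $[t,a_i],[t,b_i],[t,c_j]$ has boundary containing the invertible block $(\lambda-1)I_n=\rho(t)-I_n$ paired with the 1-cells $a_i,b_i,c_j$ respectively. Elementary row and column operations, which do not alter the torsion, cancel these $2g+s$ pairs. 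Writing $N_j=\sum_{k=0}^{\nu_j-1}\rho(c_j)^k$, the identity $N_j(\rho(c_j)-I_n)=\rho(t)-I_n$ and the main relation $\prod[a_i,b_i]\prod c_j=t^{2g-2+s}$ then imply that in the reduced four-term complex $\partial_2=0$, while $\partial_3D_j$ hits the reduced cell $\widetilde P_j$ with a coefficient proportional to $\rho(c_j)-I_n$ and $\partial_3D_0$ hits $\widetilde Q$ with a coefficient that, combined with $\rho(t)-I_n$ from $\partial_1$ via Milnor's alternating-determinant formula for an acyclic chain complex, yields the right hand side of the theorem up to the factor $\det\rho(\gamma)$ for some element $\gamma\in H_1(X_1;\Z)$ controlling the combinatorial lifts of the cells.

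\emph{Euler structure.} The geodesic vector field is nowhere vanishing on $X_1$ and by Turaev's equivalence determines a unique combinatorial Euler structure $\mathfrak e_{geod}$. To match it with the lifts used above, follow the geodesic vector field through each cell of the decomposition: since the flow is transverse to the Seifert fibers, fixing a lift $\widetilde v$ of the basepoint uniquely propagates to lifts of the 1-cells, and a local analysis near each cone point extends this to the higher cells. This identifies $\mathfrak e_{geod}$ with the combinatorial Euler structure implicit in the chain-level computation, forcing $\gamma$ to be trivial and hence producing no extra $\det\rho(\gamma)$ factor beyond the $\pm$-sign ambiguity.

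The main obstacle is this last step, because Turaev's correspondence is notoriously difficult to implement explicitly in the smooth-to-combinatorial direction, and the geodesic flow interacts nontrivially with the meridian disks $P_j$ around singular fibers. In our setting the two-dimensional base and the explicit geometry of hyperbolic geodesics should make the required local model near each cone point tractable.
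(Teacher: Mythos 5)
Your outline differs genuinely from the paper's proof in two places. For acyclicity, the paper cuts $X_1$ along tori into $X_0\cup N_0\cup\cdots\cup N_s$, checks acyclicity of each piece directly, and glues via Mayer--Vietoris, rather than invoking a Leray--Serre type argument for the Seifert map $X_1\to X$. That fibration is not a fiber bundle near the cone points, so a spectral sequence argument would need the orbifold version or a preliminary decomposition into bundle pieces anyway, which is essentially what the explicit cut-and-paste accomplishes more concretely. For the torsion, the paper likewise uses the decomposition and the multiplicativity of torsion (computing $\tor(X_0,V_\rho,\mathfrak e_0)$, $\tor(N_i,V_\rho,\mathfrak e_\circ)$ and $\tor(\partial N_i,V_\rho,\mathfrak e)$ separately) instead of Fox calculus on a single presentation CW complex for $X_1$. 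Your Fox-calculus plan could work, but as written the $\partial_3$ computation is only gestured at (the target cell $\widetilde Q$ and the precise coefficients on $D_0,D_j$ are never pinned down), so the reduction to the claimed formula is incomplete.

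The substantive gap is exactly the one you flag yourself: identifying $\mathfrak e_{geod}$ with the combinatorial Euler structure implicit in your choice of lifts. Saying that the flow is transverse to the fibers and therefore ``propagates'' the lift of the vertex to lifts of the higher cells is not how Turaev's correspondence works, and it does not determine the $H_1(X_1)$-ambiguity. The paper closes this gap by first constructing an explicit Euler chain for $\mathfrak e_0$ (\cref{Fig:Euler}), then building a nowhere-vanishing vector field $\mathfrak X_{\mathfrak e}$ realizing this Euler structure via the Stiefel/Nicolaescu construction, and finally computing the Chern--Simons class $cs(\mathfrak X_{geod},\mathfrak X_{\mathfrak e})\in H_1(X_1)$ directly, checking cell by cell (\cref{Fig:CS}) that the contribution of each generator of $H_1(X_1)$ vanishes. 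Without this (or some equivalent) computation there is an undetermined factor $\det\rho(\gamma)$, $\gamma\in H_1(X_1)$, left over, so the theorem is not proved. ``The required local model near each cone point should be tractable'' is a plan, not an argument; you would need to carry out the Chern--Simons computation (or an equivalent comparison) to finish.
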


The proof of this theorem will occupy the rest of this section. It is subdivided into three parts: first in \cref{subsubsec:cell}, we decompose the unit tangent bundle $X_1$ into simpler pieces and exhibit a cell decomposition of each piece together with a natural choice of lifts of the cells in the universal cover. We prove \cref{prop:torsion} for this choice, up to sign.
In \cref{subsubsec:Euler}, we show that this natural choice of lifts corresponds to the Euler structure induced by the geodesic flow induced by any hyperbolic metric.  In \cref{subsubsec:sign} we compute the sign.
\medbreak

\subsubsection{Computation of the torsion}
\label{subsubsec:cell}

\paragraph{Decomposition of $X_1$}
The 3-manifold $X_1$ can be cut along tori into pieces
\begin{equation}
\label{eq:paste}
X_1 = X_0 \cup N_0 \cup N_1 \cup \ldots \cup N_r,
\end{equation}
where
\begin{itemize}
\item 
$X_0 \simeq (\overline X \setminus D_0) \times S^1$ is the product of a surface with $r+1$ boundary components with a circle;
\item $N_0$ is a solid torus $D_0 \times S^1$;
\item $N_i$ are solid tori containing the exceptional fibers which correspond to the elliptic elements $c_j$.
\end{itemize}

We will use the following cell decomposition of $X_0$:
the surface $\overline X \setminus D_0$ retracts on a bouquet of $2g+r$ circles, each of which can be identified with one of $a_1, \ldots, b_g, c_1, \ldots, c_r$. We denote the circles by $A_1, B_1, \ldots, A_g, B_g, C_1, \ldots, C_r$, each corresponding to the element of the fundamental group of $\overline X \setminus D_0$ denoted by the same tiny letter.
Now the product cell structure on $X_0$ is given by
\begin{itemize}
\item One 0-cell $e_0$, the product of the 0-cell of the surface with the 0-cell of the circle $S^1$;
\item $2g+r+1$ one-cells: $A_1, B_1, \ldots, A_g, B_g, C_1, \ldots, C_r$ which are products of the 1-cells of the surface with the 0-cell of $S^1$ and $U$, the product of the 0-cell of the surface with the 1-cell of $S^1$;
\item $2g + r$ two-cells $E_{A_1}, \ldots, E_{B_g}, E_{C_1}, \ldots, E_{C_r}$, products of 1-cells in the surface with the 1-cell of $S^1$. Note that $\partial E_{\gamma} = \gamma U \gamma^{-1} U^{-1}$ for any $\gamma \in \{A_1, \ldots, C_r\}$.
\end{itemize}
The retraction of the surface $\overline X \setminus D_0$ onto the bouquet of circle induces a retraction of the product $X_0$ onto this CW complex, which is topologically a union of $2g+r$ tori, all of them glued along the circle denoted by $U$.

As we will see, this retraction can be realized by elementary collapses, in particular we can compute the torsion on the $2$-dimensional complex described above.

\paragraph{Natural lift of the cell structure.}
We pick an arbitrary basis $\{v_1, \ldots, v_n\}$ of $V_\rho$, then the complex $C_*(X_0, V_\rho)$ can be computed explicitly, as in \cite[Section 4]{Kitano}, but we need to fix lifts of the cells of $X_0$ in the universal cover $\widetilde X_0$. Such a choice of lifts will induce a homology class of spider, hence an Euler structure, that we will denote by $\mathfrak e_0$.

 Since $X_0$ has the structure of a bouquet of circles times a circle, its universal cover has the structure of a union of lattices glued along a line, a portion of one of them is drawn in \cref{cover}.

\begin{figure}[h]
\begin{center}
\def\svgwidth{0.7\columnwidth}
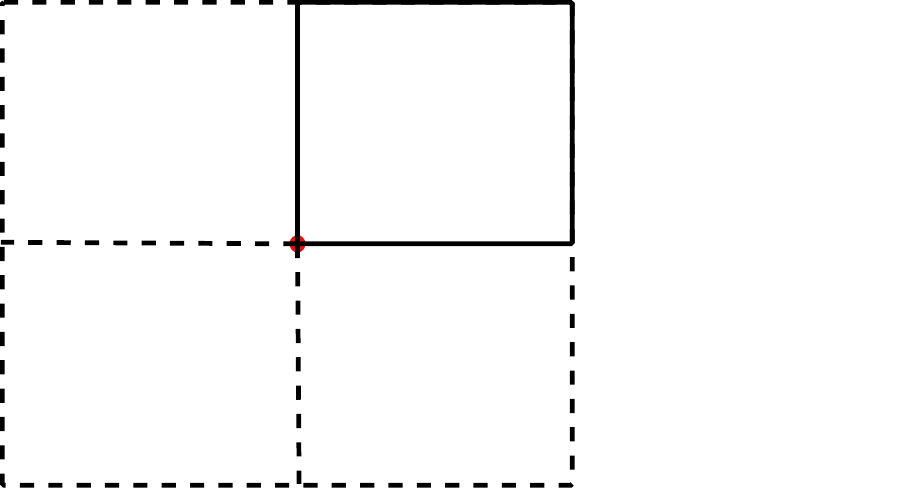
\caption{\label{cover} A sample of the universal cover of $X_0$. Here we draw part of a lattice corresponding to the universal cover of the two-cell $E_{A_1}$. The Euler structure $\mathfrak e_0$ is induced by the lifts $\widetilde e_0, \widetilde U, \widetilde A_1$ and $\widetilde E_{A_1}$ for each such two-dimensional cell.}
\end{center}
\end{figure}

We start with an arbitrary lift $\widetilde e_0$ of the unique one-cell $e \in C_0(X_0)$. Then we choose the lifts $\widetilde A_1, \widetilde U$ and $\widetilde E_{A_1}$ as in \cref{cover}. Similarly, we take the lifts of $B_1, A_2, \ldots, C_r$ and of the corresponding two-cells as above.

We denote by $\mathfrak e_0$ the Euler structure induced by this choice of lifts on the CW complex $W$ representing $X_0$. On the peripheral tori $N_i$, $i=0, \ldots, r$, we take the Euler structure $\mathfrak e_+$ as in \cref{subsubsec:extorus}.

There is a natural gluing procedure for Euler structures (\cite[Section VI.3.1]{Turaev_tors}). On the combinatorial side, if one glues $(Y_1, \mathfrak e_1)$  to $(Y_2, \mathfrak e_2)$ along $Y$, such that the Euler structure induced by  $\mathfrak e_1$ and $\mathfrak e_2$ coincide on $Y$ as $\mathfrak e$, then the glued Euler structure is $\mathfrak e_1+ \mathfrak e_2 - \mathfrak e$ as a formal sum of homology classes of singular chains.

We denote by $\mathfrak e$ the  Euler structure on $X_1$ obtained by gluing $\mathfrak e_0$ with the union of the $\mathfrak e_+$ on each $N_i$. We prove the following:
\begin{prop}
\label{prop:torscomb}
We have 
$$\tor(X_1, \rho, \mathfrak e) = \pm \frac{\det(I_n-\rho(u))^{2g+r-2}}{\prod_{j=1}^r \det(I_n-\rho(c_j))}.$$
\end{prop}

Note that \cref{prop:torsion} will follow from this proposition after we will have shown that the Euler structure $\mathfrak e$ coincides with the Euler structure $\mathfrak e_\geod$ on $X_1$, and after determining the sign. This will be done in the next subsections.

Before we prove \cref{prop:torscomb}, let us describe the matrices of the boundary maps of the complex $C_*(X_0, V_\rho)$ given by our choice of bases.

The set $\{ v_i \otimes \widetilde E_\gamma \mid i = 1, \ldots, n, \, \gamma = A_1, \ldots, C_r \}$ is now a basis of $C_2(X_1, V_\rho)$. Similarly, $\{v_i \otimes \gamma \mid i = 1, \ldots, n, \, \gamma = \widetilde A_1, \ldots, \widetilde C_r, \widetilde U \}$ is a basis of $C_1(X_1, V_\rho)$ and $\{v_i \otimes \widetilde e_0 \mid i= 1, \ldots, n\}$ is a basis of $C_0(X_1, V_\rho)$.

The boundary operator $\partial_2 \colon C_2(X_1, V_\rho) \to C_1(X_1, V_\rho)$ reads
$$\partial_2(v_i \otimes E_{A_1}) = v_i \otimes \left((1-u) \cdot \widetilde A_1 + (a_1 - 1) \cdot \widetilde U\right),$$ and similarly for other two-cells.
Further, $\partial_1(v_i \otimes \widetilde A_1) = v_i \otimes (a_1 - 1) \widetilde e_0$ and similarly for the other one-cells. Hence, in the given bases, the boundary maps are given by the following matrices
$$\partial_2 = \begin{pmatrix} 
I_n - \rho(u) & 0 & \cdots & \cdots & 0\\
0 & I_n - \rho(u) &0 &  \cdots & 0\\
\vdots & \vdots & \vdots & \vdots  & \vdots\\
0 & 0 & \cdots & \cdots &  I_n - \rho(u)\\
\rho(a_1) - I_n & \cdots & \cdots & \cdots  & \rho(c_r)-I_n
\end{pmatrix},$$
$$\partial_1 = \begin{pmatrix} \rho(a_1) - I_n & \cdots & \cdots & \cdots  & \rho(c_r)-I_n & \rho(u)-I_n \end{pmatrix}.$$

We start with the following two preparatory lemmas:
\begin{lem}
\label{lem:bound}
Let $T = S^1\times S^1$ be a torus, $\rho \colon \pi_1(T) \to \GL(V_\rho)$ a representation such that $\det(\rho (\gamma) -I_n) \neq 0$ for some $\gamma$. Let $\mathfrak e$ be the Euler structure on $T$ induced by the choice of lifts described in \cref{cover}. Then $C_*(T, V_\rho)$ is acyclic and $\tor(T, \rho, \mathfrak e) = \pm 1$.
\end{lem}
\begin{proof}
Take $\gamma$ being the homotopy class of one of the two $S^1$ factors, and denote by $\mu$ the class of the other factor. Then the complex $C_*(T,V_\rho)$ is
$$C_2(T, V_\rho) \simeq V_\rho \xrightarrow{\partial_2} C_1(T, V_\rho) \simeq V_\rho^2 \xrightarrow{\partial_1} C_0(T, V_\rho) \simeq V_\rho $$
where 
$$\partial_2 = \begin{pmatrix} \rho(\gamma) - I_n\\ I_n - \rho(\mu) \end{pmatrix}, \quad \partial_1 = \begin{pmatrix} \rho(\gamma) - I_n &  \rho(\mu)- I_n \end{pmatrix}$$
Since $\det (\rho(\gamma)-I_n) \neq 0$, one has $\partial_2$ is injective, $\partial_1$ is surjective and since $\chi(T)=0$, one deduces that the complex $C_*(Y, V_\rho)$ is acyclic. If one choses bases $\mathbf b^i$ of the spaces $B_i(T, V_\rho)$ such that the change of basis matrices $\mathbf{\overline b}^1 \colon \mathbf c^2 = \mathbf{ b}^0 \colon \mathbf c^0 = I_n$, then one has

$$\mathbf b^1 \sqcup \mathbf{\overline b}^0  \colon \mathbf c^1 = \begin{pmatrix} \rho(\gamma)-I_n & 0 \\ 0 & (\rho(\gamma)-I_n)^{-1} \end{pmatrix}$$
and the lemma follows.
\end{proof}
\begin{lem}
Let $\rho \colon \pi_1(X_1) \to \GL(V_{\rho})$ be an $n$-dimensional irreducible representation. If $\det (\rho(u)-I_n) \neq 0$, then
the complex $C_*(X_1, V_{\rho})$ is acyclic.
\end{lem}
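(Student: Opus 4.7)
Since $t$ is central in $\pi_1(X_1)$ and $\rho$ is irreducible, Schur's lemma yields $\rho(t) = \lambda I_n$ for some $\lambda \in \mathbb{C}$, and the hypothesis $\det(\rho(t)-I_n) \ne 0$ becomes $\lambda \ne 1$. Combined with the orbifold relations $c_j^{\nu_j}=t$, this forces every eigenvalue of $\rho(c_j)$ to be a nontrivial $\nu_j$-th root of $\lambda$, so $\det(\rho(c_j) - I_n) \ne 0$ as well; both invertibilities will drive the argument.

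My first step is a direct analysis of the matrices $\partial_2,\partial_1$ displayed above, working on $X_0$. Since $I_n - \rho(t) = (1-\lambda) I_n$ is an invertible scalar matrix, the diagonal blocks of $\partial_2$ make it block-injective, giving $H_2(X_0, V_\rho) = 0$; the last column $\rho(t) - I_n = (\lambda - 1) I_n$ of $\partial_1$ is likewise invertible, making $\partial_1$ surjective and $H_0(X_0, V_\rho) = 0$. For $H_1$ I would exhibit an explicit primitive: for any cycle $(x_1, \dots, x_{2g+s}, x_T) \in \ker \partial_1$, the chain $\frac{1}{1-\lambda}\sum_i x_i \otimes \widetilde E_{\gamma_i}$ maps under $\partial_2$ to exactly $\sum_i x_i \otimes \widetilde \gamma_i + x_T \otimes \widetilde T$, because the cycle condition reads $\sum_i(\rho(\gamma_i) - I_n)x_i = (1-\lambda) x_T$ and the scalar $(1-\lambda)^{-1}$ commutes with every matrix.

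To pass from $X_0$ to $X_1$ I would apply the Mayer--Vietoris long exact sequence to the decomposition $X_1 = X_0 \cup \bigsqcup_{j=0}^s N_j$ glued along boundary tori $T_j = \partial N_j$. Each $N_j$ deformation retracts onto its core circle; the monodromy of $\rho$ along that core is a power of $\rho(t) = \lambda I_n$ for $N_0$ and $\rho(c_j)$ for $j\ge 1$, and neither matrix has eigenvalue $1$, so the circle computation recorded in the preceding example gives $H_*(N_j, V_\rho) = 0$. Each boundary torus $T_j \cong S^1 \times S^1$ contains the fiber class $t$, and since $\rho(t)=\lambda I_n$ is scalar, the twisted chain complex of $T_j$ splits over $\mathbb{C}$ as a tensor product with an acyclic circle factor, forcing $H_*(T_j, V_\rho) = 0$. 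With the two outer terms of the Mayer--Vietoris sequence vanishing in every degree, $H_*(X_1, V_\rho) = 0$ follows at once.

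The main obstacle I foresee is pinning down the class in $\pi_1(X_1)$ represented by the core of the central solid torus $N_0$ (which is tangled with the Seifert relation $\prod[a_i,b_i]\prod c_j = t^{2g-2+s}$) and rigorously justifying the $\mathbb{C}$-tensor product splitting of the twisted complex on each boundary torus. Both are resolved by the fact that $\rho(t)$ is a central scalar; once that input is granted, the rest is a formal assembly.
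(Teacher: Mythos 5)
Your argument is correct and matches the paper's essentially step for step: acyclicity of each solid torus $N_j$ and its boundary torus from invertibility of $\rho(c_j)-I_n$ (forced by $c_j^{\nu_j}=t$ together with $\det(\rho(t)-I_n)\ne 0$), acyclicity of $X_0$ from the explicit boundary matrices, and Mayer--Vietoris to assemble $X_1$. The one small variation is in $H_1(X_0)=0$: the paper gets it for free from $\chi(X_0)=0$ once $H_0=H_2=0$, whereas you exhibit the explicit primitive $\tfrac{1}{1-\lambda}\sum_i x_i\otimes\widetilde E_{\gamma_i}$; both are fine, and your stated worry about pinning down the core of $N_0$ is moot, since that core is a regular fiber in the class $t$ irrespective of how the Seifert data twists the gluing along $\partial N_0$.
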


\begin{proof}
For $i=0, \ldots, r$, the solid torus $N_i$ has the homotopy type of a circle, hence the complex $C_*(N_i, V_{\rho})$ is just
$$ C_1(N_i, V_{\rho}) \simeq V_{\rho} \xrightarrow{\rho(c_i)-I_n} V_{\rho} \simeq C_0(N_i, V_{\rho})$$
where $\rho(c_0)$ is understood as being $\rho(u)$. Since $\rho(c_i)^{\nu_i} = \rho(u)$, the assumption $\det (\rho(u)-I_n) \neq 0$ implies $\det (\rho(c_i)-I_n) \neq 0$ for any $i$.
Hence, we showed that $C_*(N_i, V_{\rho})$ is acyclic when $\det (\rho(u)-I_n) \neq 0$. Also $C_*(\partial N_i, V_\rho)$ is acyclic for any $i$, see \cref{lem:bound}.

Now we prove that $C_*(X_0, V_{\rho})$ is acyclic. First, the map $\partial_1$ is surjective since $\det (\rho(u)-I_n) \neq 0$, hence $H_0(X_0, V_\rho) = \{0\}$. Moreover, the $n(2g+r) \times n(2g+r)$ diagonal submatrix 
$$\partial'_2 = \begin{pmatrix} 
I_n - \rho(u) & 0 & \cdots & \cdots & 0\\
0 & I_n - \rho(u) &0 &  \cdots & 0\\
\vdots & \vdots & \vdots & \vdots  & \vdots\\
0 & 0 & \cdots & \cdots &  I_n - \rho(u)\\
\end{pmatrix}$$
of $\partial_2$ has determinant equal to $\det(I_n - \rho(u))^{2g+r}$, in particular it is non-zero and $\partial_2$ is injective, i.e., $H_2(X_0, V_\rho)=\{0\}$. Since the Euler characteristic of $X_0$ vanishes, we conclude that $H_1(X_0, V_\rho) = \{0\}$ and the acyclicity follows. By a Mayer--Vietoris argument one deduces that $C_*(X_1, V_\rho)$ is acyclic.
\end{proof}

\begin{proof}[Proof of  \cref{prop:torscomb}]
First, because $\rho(u) \neq I_n$ is central, $\det(I_n - \rho(u)) \neq 0$.
Now we use again a Mayer--Vietoris argument. 
For any solid torus $N_i$ in the decomposition given in \cref{eq:paste}, we compute the torsion as in \cref{prop:torstorus} in the Euler structure $\mathfrak e_\circ = \mathfrak e_+$. Denote by $\ell_i$ the core of the solid torus $N_i$. Since $c_i^{\nu_i} u^{-1}$ is a meridian of $N_i$, the curve $\ell_i$ is given by $\ell_i= c_i^{n_i} u^{m_i}$ with $m_i\nu_i + n_i = 1$. Writing $u=c_i^{\nu_i}$, one sees that $\ell_i = c_i$ is the core of the solid torus $N_i$. For the torsion we obtain
$$\tor(N_i, \rho, \mathfrak e_\circ) = \pm \frac 1 {\det(I_n - \rho(c_i))},$$
 where $c_0$ is understood as being $u$. 
On the other hand, using \cite[Theorem 2.2]{Turaev_book}, one computes the torsion of $X_0$ in the Euler structure $\mathfrak e_0$:
\begin{equation}
\label{eq:torX0}
\tor(X_0, \rho, \mathfrak e_0) = \pm \frac{\det(\partial_2')}{\det(\rho(u)-I_n)}
\end{equation}
and it follows 
\begin{equation}
\label{eq:tor0}
\tor(X_0, \rho, \mathfrak e_0) = \pm \det(I_n-\rho(u))^{2g+r-1}.
\end{equation}

Finally, we use the multiplicativity of the torsion to compute the torsion of $X_1$ from the torsions of the pieces $X_0, N_i$ and the Mayer--Vietoris sequence.
Note that the Euler structure induced by $\mathfrak e_0$ on $\partial X_0$ coincides with the Euler structure induced by the Euler structure $\mathfrak e_+$ (see \cref{subsubsec:extorus}) on each toral boundary $\partial N_i$.
We use the exact sequence of complexes
$$0 \to C_*(\partial X_0, \rho, \mathfrak e_0) \to C_*(\coprod_i N_i, \rho, \mathfrak e_+) \oplus C_*(X_0, \rho, \mathfrak e_0) \to C_*(X_1, \rho, \mathfrak e) \to 0$$
 We have then from \cite[Theorem 1.5]{Turaev_book} the formula:
$$\tor(X_1, \rho, \mathfrak e) =\pm \frac{\tor(X_0, \rho, \mathfrak e_0) \prod_{i=0}^r\tor(N_i, \rho, \mathfrak e_\circ)}{\prod_{i=0}^r\tor(\partial N_i,\rho, \mathfrak e_0)}.$$

For the boundary tori $\partial N_i$, \cref{lem:bound} gives $\tor(\partial N_i, \rho, \mathfrak e_0) = 1$. We deduce
\begin{equation*}
	\tor(X_1, \rho, \mathfrak e) = \pm \frac{\det(I_n-\rho(u))^{2g+r-2}}{\prod_{j=1}^r \det(I_n-\rho(c_j))}.\qedhere
\end{equation*}
\end{proof}

\subsubsection{Computation of the Euler structure}
\label{subsubsec:Euler}
Now we prove that the Euler structure $\mathfrak e$ on $X_1$ that we have chosen in \cref{subsubsec:cell} corresponds to the geodesic flow through Turaev's isomorphism between combinatorial and smooth Euler structure.

First, note that the geodesic flow $\mathfrak X_\geod$ on $X_1$ is homotopic to the vector field $\mathfrak X_{\rot}$
obtained by ``rotation in the fiber'' defined at the end of \cref{subsec:Orbi}. A nowhere vanishing homotopy can be realized as $t  \mathfrak X_\rot + (1-t) \mathfrak X_{\geod}$ on $X_0$, since they generate a 2-dimensional plan everywhere.

In particular it follows from \cref{subsubsec:extorus} that the Euler structure induced by $X_1$ coincides with the Euler structure $\mathfrak e_+$ on each peripheral solid tori $N_i$.

For $X_0$, by elementary expansion we can turn the CW decomposition of \cref{subsubsec:cell} into a product cell structure, with one 3-cell:
first add a 1-cell $C_0$ (corresponding to $c_0$) and a 2-cell $E_0$ to the bouquet of circle which was the cell decomposition for the surface $\overline X \setminus D_0$, such that the boundary of the two cell $E_0$ yields the relation $\prod[a_i, b_i]\prod c_j = c_0$. Now taking the product with the circle $S^1$ turns into adding one more 2-cell $E_{C_0}$ and a 3-cell $B_0$ to the CW complex $W$ that we described for $X_0$. We call the new CW complex $W'$. We extend the Euler structure $\mathfrak e_0$ on $W$ into an Euler structure $\mathfrak e_0'$ on $W'$ in the same way we defined the Euler structure for the solid torus: 
\begin{itemize}
\item
we take a path from an inner point in $C_0$ to $e_0$ following $C_0$ opposite to its orientation, 
\item
a path from $e_0$ to an inner point in $E_0$, staying in the interior of $E_0$,
\item
a path from $e_0$ to an inner point in $E_{C_0}$, staying in the interior of $E_{C_0}$, and going in the direction given by the circle factor,
\item
a path from an inner point in  $B_0$ to $e_0$, staying in the interior of $B_0$, going in the direction opposite to the circle.
\end{itemize}
Applying \cref{lem:torcollapse}, we have
$$\tor(W', \rho, \mathfrak e_0') = \tor(W, \rho, \mathfrak e_0)$$
hence we are led to show that the Euler structure $\mathfrak e'_0$ is the same as $\mathfrak e_\rot$, the one induced by the rotation flow $\mathfrak X_\rot$ on $W'$. We recap this here as a proposition:

\begin{prop}\label{prop:euler}
The Euler structures $\mathfrak e'_0$ and $\mathfrak e_\rot$ coincide.
\end{prop}

\begin{proof}
As a first step, let us prove the following claim:
\begin{claim}
The difference $\mathfrak e'_0 - \mathfrak e_\rot$ is represented by a multiple of the homology class of $u$ in $H_1(X_0)$.
\end{claim}
To see this, consider the action of the group of diffeomorphisms of the surface $\overline X \setminus D_0$ on the manifold $X_0 = (\overline X \setminus D_0) \times S^1$, acting on each slice.
This induces an action on the set of Euler structure, and on the other hand it acts on the fundamental group in the obvious way, and on the homology $H_1(\overline X \setminus D_0) \subset H_1(X_0)$. Now note that this action fixes $\mathfrak e_0'$ (since $\tor(X_0, \rho, \mathfrak e_0')$ computed in \cref{eq:tor0} is invariant for any $\rho$), and obvious fixes $\mathfrak e_\rot$ (since it fixes $\mathfrak X_\rot$). Writing $h = \mathfrak e'_0 - \mathfrak e_\rot \in H_1(X_0)$, it follows that $h$ is fixed by the action of any diffeomorphism of the surface, and the claim follows.

\medbreak

Let us denote by $\mathfrak X$ the Stiefel vector field on the CW complex $W'$. After fixing one point in the interior of each cell of $W'$, $\mathfrak X$ is a vector field that ``pushes'' each cell on the lower dimensional skeleton, and which singularities are precisely these points, one in each cell. As we mentioned, given an Euler chain $c$ representing the Euler structure $\mathfrak e'_0$, after assuming that we picked points in each cells which are precisely the endpoints of the paths in $c$, there is an essentially unique way to desingularize $\mathfrak X$ into a nowhere vanishing vector field $\mathfrak X_0$, which represents the Euler structure $\mathfrak e'_0$. 

Let us decompose the 2-cell $E_0$ into three parts: taking a radial coordinate $R$ on $E_0$ (which is homeomorphic to a 2-dimensional disk), we cut it into $\{ R\le 1/3\}$, $\{1/3 < R < 2/3\}$ and $\{2/3\le R \le 1\}$. It also gives a decomposition of $B_0 = E_0 \times S^1 \setminus E_0$. 
As Turaev explains in \cite[Section 5.2]{Turaev}, since $\mathfrak e'_0 - \mathfrak e_\rot$ is a multiple of $u$, the obstruction of $\mathfrak X_0$ and $\mathfrak X_\geod$ being homologous can be computed in a tubular neighborhood of a representative of $u$ in $X_0$, which we realize as 
$\{ R\le 1/3\} \times S^1$, the inner part on \cref{fig:X0}.

\begin{figure}[h]
\begin{center}
\def\svgwidth{1\columnwidth}
\begingroup%
  \makeatletter%
  \providecommand\color[2][]{%
    \errmessage{(Inkscape) Color is used for the text in Inkscape, but the package 'color.sty' is not loaded}%
    \renewcommand\color[2][]{}%
  }%
  \providecommand\transparent[1]{%
    \errmessage{(Inkscape) Transparency is used (non-zero) for the text in Inkscape, but the package 'transparent.sty' is not loaded}%
    \renewcommand\transparent[1]{}%
  }%
  \providecommand\rotatebox[2]{#2}%
  \newcommand*\fsize{\dimexpr\f@size pt\relax}%
  \newcommand*\lineheight[1]{\fontsize{\fsize}{#1\fsize}\selectfont}%
  \ifx\svgwidth\undefined%
    \setlength{\unitlength}{514.27249293bp}%
    \ifx\svgscale\undefined%
      \relax%
    \else%
      \setlength{\unitlength}{\unitlength * \real{\svgscale}}%
    \fi%
  \else%
    \setlength{\unitlength}{\svgwidth}%
  \fi%
  \global\let\svgwidth\undefined%
  \global\let\svgscale\undefined%
  \makeatother%
  \begin{picture}(1,0.22443249)%
    \lineheight{1}%
    \setlength\tabcolsep{0pt}%
    \put(0,0){\includegraphics[width=\unitlength,page=1]{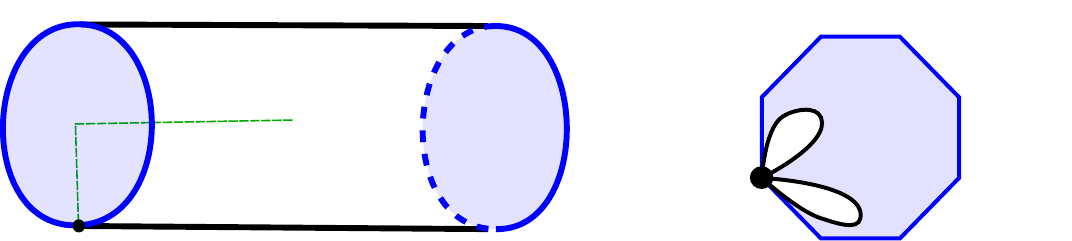}}%
    \put(0.66171373,0.08759346){\makebox(0,0)[lt]{\lineheight{1.25}\smash{\begin{tabular}[t]{l}$a_1$\end{tabular}}}}%
    \put(0.69887226,0.1691584){\makebox(0,0)[lt]{\lineheight{1.25}\smash{\begin{tabular}[t]{l}$b_1$\end{tabular}}}}%
    \put(0.78466298,0.20897732){\makebox(0,0)[lt]{\lineheight{1.25}\smash{\begin{tabular}[t]{l}$a_1^{-1}$\end{tabular}}}}%
    \put(0.87670378,0.17131993){\makebox(0,0)[lt]{\lineheight{1.25}\smash{\begin{tabular}[t]{l}$b_1^{-1}$\end{tabular}}}}%
    \put(0.79888981,0.04376411){\makebox(0,0)[lt]{\lineheight{1.25}\smash{\begin{tabular}[t]{l}$c_0$\end{tabular}}}}%
    \put(0.78466298,0.11483388){\makebox(0,0)[lt]{\lineheight{1.25}\smash{\begin{tabular}[t]{l}$E_0$\end{tabular}}}}%
    \put(0,0){\includegraphics[width=\unitlength,page=2]{euler.pdf}}%
  \end{picture}%
\endgroup%

\caption{\label{fig:X0} On the right, we describe the 2-cell $E_0$. Note that all the boundary part, and the gluing between the edges representing the $a_i$ and $b_i$, lie in the outer annulus $\{2/3\le R \le 1\}$. 
On the left we draw the 3-cell $B_0$, the light blue two cell schematically represents $E_0$ (where we omit the identifications in the boundary, and the 1-cells attached). The Euler chain representing $\mathfrak e_0'$ also lies in this outer annulus, except for the part that we drawn in green. In purple, we draw a vector field representing the same homology class as $\mathfrak X_0$.}
\end{center}
\end{figure}

In other words, the homology class of the vector field $\mathfrak X_0$ can be represented by a vector field equal to $\mathfrak X_\rot$ on the outer part $\{2/3\le R \le 1\} \times S^1$. On the inner part $\{ R\le 1/3\} \times S^1$, we remove a tubular neighborhood of an Euler chain representing $\mathfrak e_0'$: a path from $e_0$ to the center of $E_0$ in $E_0$, and then a path from the center of $B_0$ to the center of $E_0$, that we take in the interior of $\{ R\le 1/3\} \times S^1$ (in green in \cref{fig:X0}). Outside of this ball neighborhood, so from the center of $B_0$ to the other copies of $E_0$ (note $B_0$ touches $E_0$ from both sides), $\mathfrak X_0$ also equals $\mathfrak X_\rot$, by definition of the Stiefel vector field. In the intermediate part $\{1/3 < R < 2/3\} \times S^1$, we just interpolated the two by a homotopy near the two boundary parts. 
Now we see that the two vector fields are homotopic, outside of a ball neighborhood of the Euler chain, and it proves that the induced Euler structures coincide.
\end{proof}

\begin{rmrk}
Unlike the manifold $X_1$, the pieces $X_0$ and $N_i$ are manifolds with boundary. In this case, Turaev introduces a thickening of the boundary so that the vector fields representing Euler structures are transverse to the boundary. Here, we deal with vector fields tangent everywhere to the boundary. It makes the description much simpler, and we can recover the setting of Turaev by taking a thickening of the boundaries of $X_0$ and of each $N_0$ and by extending the vector fields so that \begin{itemize}
\item
Each vector field is tangent to the slice $\partial \Theta \times \{0\}$ for $\Theta \in \{X_0, N_0, \ldots, N_r\}$.
\item
The vector field is pointing outward on $\partial X_0 \times \{1\}$.
\item
The vector field are pointing inward on $\partial N_i \times \{1\}, i=0, \ldots, r$.
\end{itemize}
\end{rmrk}

\subsubsection{Sign-refined torsion}
\label{subsubsec:sign}

\paragraph{Homology orientation}
First we need to fix a homology orientation on $X_1$. Like for any closed 3-manifold, there is a canonical choice of homology orientation $\omega^1$: take any base of $H_0(X_1, \R) \oplus H_1(X_1, \R)$, and the dual base of $H_2(X_1, \R) \oplus H_3(X_1, \R)$.
We fix the following homology bases:
$H_0(X_1, \R)$ has a natural base, and take the base given by $\{A_1, B_1, \ldots, A_g, B_g, U\}$ of $H_1(X_1, \R) \simeq \R^{2g+1}$ of $H_1(X_1, \R)$.

 Let us make clear that the dual basis of $H_2(X_1, \R)$ is represented by the cells
$\{ E_{B_1}, E_{A_1}, \ldots, E_{B_g}, E_{A_g}, E_0\}$ \emph{in this order}.

To compute the sign of the torsion, we use the gluing formula (\cite[Theorem VII.1.4]{Turaev_tors}): 
\begin{equation}
\label{eq:TurGlue}
\tor(X_1, \rho, \mathfrak e, \omega^1) = \frac { \tor (X_0, \rho, \mathfrak e_0, \omega^0)}{\prod_{j=0}^r \det(I_n-\rho(c_i))}
\end{equation}
where $\frac 1 {\det(I_n-\rho(c_i))}$ is the torsion of the solid torus $N_i$ with its natural homology orientation, see \cref{prop:torstorus}.

The homology orientation $\omega^0$ is defined in \cite[Section V.2]{Turaev_tors}: there is a homology orientation 
$$\widetilde \omega = (-1)^{(r+1)b_3(X_1) + (b_1(X_0)+1)(b_1(X_1)+r+1)} \omega^1 = - \omega^1$$
of $X_1$ 
such that  for the homology orientations $\widetilde \omega$ of $X_1$, $\omega^0$ of $X_0$ and the natural homology orientation $\omega_{(X_1, X_0)}$ of the pair $(X_1, X_0)$, the torsion of the long exact sequence of the pair $(X_1,X_0)$ in real homology has positive sign.

\begin{lem}
The homology orientation $\omega^0$ of $X_0$ is induced by the bases:
\begin{itemize}
\item 
$-e_0$ of $H_0(X_0, \R)$,
\item
$\{A_1, B_1, \ldots, A_g, B_g, C_1, \ldots, C_r, U \}$ of $H_1(X_0, \R) \simeq \R^{2g+r+1}$,
\item
$\{ E_{B_1}, E_{A_1}, \ldots, E_{B_g}, E_{A_g}, E_{C_1}, \ldots, E_{C_r}\}$ of $H_2(X_0, \R) \simeq \R^{2g+r}$,
\end{itemize}
\end{lem}

\begin{proof}
Let us compute the long exact sequence of the pair $(X_1,X_0)$ in homology with real coefficients:
\begin{multline}
\label{eq:LES}
0 \to H_3(X_0) \xrightarrow{i_3}  H_3(X_1) \xrightarrow{r_3} H_3(X_1, X_0) \xrightarrow{\delta_3} \\
H_2(X_0) \xrightarrow{i_2} H_2(X_1) \xrightarrow{r_2} H_2(X_1, X_0) \xrightarrow{\delta_2}\\
H_1(X_0) \xrightarrow{i_1}  H_1(X_1) \xrightarrow{r_1} H_1(X_1, X_0) \xrightarrow{\delta_1}\\
H_0(X_0) \xrightarrow{i_0}  H_0(X_1) \xrightarrow{r_0} H_0(X_1, X_0) \to 0
\end{multline}
We need to show that the torsion of this long exact sequence with respect to the orientations $\widetilde \omega, \omega^0, \omega_{(X_1,X_0)}$ has positive sign.  We compute the sign of the torsion with the orientations $\omega^1 = -\widetilde \omega, -\omega_0, \omega_{(X_1,X_0)}$, and show it is positive.

Clearly $H_3(X_0) = \{0 \}$ since it is a 3-manifold with boundary, and also $H_1(X_1, X_0) = H_0(X_1, X_0) = \{0\}$. 
We already specified bases of $H_*(X_1)$ and of $H_*(X_0)$. As we alluded before, there is a natural basis of $H_*(X_1, X_0)$ (\cite[V.2.2]{Turaev_tors}) supported in the peripheral tori: each torus $N_i$ contributes a class $[N_i, \partial N_i]$ in $H_3(X_1, X_0)$, and a class $[D_i, \partial D_i]$ in $H_2(X_1, X_0)$, where $N_i = S^1 \times D_i$.

Let us write down the matrices of the linear maps involved in \cref{eq:LES} in these bases. The notations $\bigzero_{n\times m}$ and $\bigId_{n}$ stand for the $(n\times m)$ matrix with all entries zero, and for the $(n \times n)$ identity matrix:
$$
r_3 = \bma 1 \\ \vdots \\ 1 \ema, \quad \delta_3 = \bma & \begin{matrix} \bigzero_{2g \times (1+r)} \end{matrix} & \\
& & 0 \\
&  \begin{matrix} \bigId_r \end{matrix}& \vdots \\
& & 0
\ema, 
\quad 
i_2 =  \bma & \bigId_{2g} & & \bigzero_{2g \times r} & \\
0 &\ldots & 0 &\ldots & 0
\ema
$$
$$
r_2 = \bma & 1 \\
& 0\\
\bigzero_{(1+r) \times 2g} & \vdots\\
 & \\
 & 0
 \ema, 
 \quad
 \delta_2 = \bma 0 & & & \\
 \vdots & & \bigzero_{2g \times r} &\\
0&&& \\
 \vdots & & \bigId_r&\\
 0 & & \ldots &0 
 \ema, 
 $$
 $$
 i_1 = \bma & & & &\\
 & \bigId_{2g} & & \bigzero_{2g \times (r+1)} &\\
 & & & &\\
 0 &\ldots & \ldots & 0 & 1
 \ema, 
 \quad 
 i_3 = r_1 = \delta_1 = r_0 = 0, 
 \quad 
 i_0 = \bma 1 \ema
$$
To compute the sign of the torsion if this long exact sequence, one needs to compare the bases of the homology vector spaces involved, with the new bases given by the matrices above. As it turns out, the sign is given by the deteminant of some permutation matrices. A careful examination of the matrices involved shows that the sign is one. We leave the verification to the reader, we just mention that the only cases where the determinant is not trivially equal to 1 occur when there is a permutation matrix with a bloc $ \textbf{I}_{2g}$. Since it has even size, the all the determinants involved are indeed equal to 1. 
\end{proof}

\paragraph{Determination of the sign.}
Now we compute the sign of $\tor(X_1, \rho, \mathfrak e, \omega^1)$. We use \cref{eq:TurGlue}, hence we need to carefully follow the sign in the computation of $\tor(X_1, \rho, \mathfrak e_0)$ performed in \cref{subsubsec:cell}, with respect to the homology orientation $\omega^0$ of $X_0$. The equation to examine is \cref{eq:torX0}.
First note that the denominator $\pm (\rho(u) - I_n)$, which is given by the boundary map $\partial_1$, comes with a sign: it is now $I_n - \rho(u)$ since the basis of $C_0(X_0, \rho)$ fitting with the homology orientation $\omega^0$ is 
$\{ -v_1 \otimes \widetilde e_0, \ldots, - v_n \otimes \widetilde e_0 \}$. Now in the numerator, one needs to be careful that the matrix $\partial_2'$, which was a diagonal matrix, is now the product of $\pm(I_n - \rho(u))$ with a permutation matrix, since the basis of $C_2(X_0, \rho)$ fitting with the homology representation exchanges the $v_i \otimes \widetilde E_{A_j}$ with $v_i \otimes \widetilde E_{B_j}$. 
To determine the sign, one needs to determine the orientation of the 2-cells $\widetilde E_{A_j}$ and $\widetilde E_{B_j}$. Recall that they are dual to $\widetilde B_j, \widetilde A_j$, and the duality is given by the triple of tangent vectors to $A_j, B_j, U$ determining the positive orientation in $X_0$. Hence one has $\partial \widetilde v_i \otimes E_{A_j} = (I_n - \rho(a_j)) \widetilde U + (\rho(u) - I_n) \widetilde A_j$, but 
$\partial \widetilde v_i \otimes E_{B_j} = (I_n - \rho(u)) \widetilde B_j + (\rho(b_j) - I_n) \widetilde U$.
So the sign $(-1)^g$ coming from the orientation discrepancy between the 2-cells $\widetilde A_j$ and $\widetilde B_j$ compensates with the sign $(-1)^g$ coming from the permutation.

Hence we have
$$\tor(X_0, \rho, \mathfrak e_0, \omega^0) =\det(I_n - \rho(u))^{2g+r-1}.$$
Using \cref{eq:TurGlue} we deduce
\begin{equation}
\label{eq:signTor}
\tor(X_1, \rho, \mathfrak e, \omega^1) = \frac{\det(I_n - \rho(u))^{2g+r-2}}{\prod_{j=1}^r \det(I_n-\rho(c_i))}
\end{equation}
and this achieves the proof of \cref{prop:torsion}.

\section{A Selberg trace formula for non-unitary twists}
\label{sec:STF}
We introduce the twisted Bochner--Laplacian on sections of vector bundles over $X$ and use it to derive a trace formula for the corresponding heat operator. In \cref{subsec:vect},
we define the orbifold vector bundles we will make use of, and in \cref{subsec:Laplacians}, the corresponding twisted Bochner--Laplace operators. Then, in \cref{subsec:pretrace}, we write the pre-trace formula for the corresponding heat operator, and we compute the contribution of the identity (\cref{subsec:id}), the hyperbolic (\cref{subsec:hyp}) and elliptic (\cref{subsec:ell}) contributions. We gather all this in \cref{subsec:STF} to write down the Selberg trace formula for (a shift of) the twisted Laplacian. Finally, in \cref{subsec:mult}, we prove a technical result on multiplicities of the eigenvalues of the Laplacians that will be used in \cref{sec:Ruelle} to prove the meromorphicity of the twisted Ruelle zeta function.

\subsection{Orbifold vector bundles over $X$}
\label{subsec:vect}

Recall that $\widetilde{\Gamma}$ denotes the preimage of $\Gamma\subseteq G$ in $\widetilde{G}$ and $\widetilde{\Gamma}\simeq\pi_1(X_1)$. Consider a finite-dimensional, complex representation $\rho:\widetilde{\Gamma}\to\GL(V_\rho)$. Let
$$ E_\rho = V_\rho\times_{\widetilde{\Gamma}}\widetilde{G}\to\widetilde{\Gamma}\backslash\widetilde{G}=X_1 $$
denote the associated flat vector bundle over $X_1$, i.e.
$$ E_\rho = (V_\rho\times\widetilde{G})/_{(\rho(\gamma)v,g)\sim(v,\gamma^{-1}g),\,v\in V_\rho,\gamma\in\widetilde{\Gamma},g\in\widetilde{G}}. $$
We equip $E_\rho$ with a flat connection $\nabla^{E_\chi}$. In general, this vector bundle does not define a vector bundle over $X=X_1/\widetilde{K}$, because both $\widetilde{\Gamma}$ and $\widetilde{K}$ contain the center $\widetilde{Z}$ of $\widetilde{G}$ and $\rho$ is not necessarily trivial on $\widetilde{Z}$. In order to obtain a vector bundle on $X$, we have to twist this construction by a character of $\widetilde{K}$ which is compatible with $\rho$ on $\widetilde{Z}$. We therefore assume that $\rho(u)=e^{-im\pi} \cdot \Id$ for some $m\in\Q$. Note that this is automatically satisfied if $\rho$ is irreducible by \cref{lem:CenterActionIrredPi1X1}.

Let $\tau=\tau_m:\widetilde{K}\to\operatorname{U}(V_\tau)=\operatorname{U}(1)$ denote the character of $\widetilde{K}$ defined in \cref{eq:DefTauM}, then $\rho(u)=\tau(u)^{-1}\cdot\Id_{V_\rho}$. We consider the homogeneous vector bundle 
$$ E_\tau=\widetilde{G}\times_{\widetilde{K}}V_\tau\to \H=\widetilde{G}/\widetilde{K}, $$
i.e.
$$ E_\tau = (\widetilde{G}\times V_\tau)/_{(gk,v)\sim(g,\tau(k)v),\,g\in\widetilde{G},k\in\widetilde{K},v\in V_\tau}. $$
The invariant inner product on $V_\tau$ induces a Hermitian metric and a metric connection $\nabla^{E_\tau}$ on $E_\tau$. By the same reason as above, the bundle $E_\tau$ does not factor through $X=\widetilde{\Gamma}\backslash\widetilde{G}/\widetilde{K}$.

In order to obtain an orbifold vector bundle on $X$ associated with $\rho$, we consider the tensor product representation $V_\tau\otimes V_\rho$ of $\widetilde{K}\times\widetilde{\Gamma}$ and let
$$ E_{\tau,\rho} = \widetilde{G}\times_{\widetilde{K}\times\widetilde{\Gamma}}(V_\tau\otimes V_\rho) \to \widetilde{\Gamma}\backslash\widetilde{G}/\widetilde{K}, $$
where $\widetilde{K}\times\widetilde{\Gamma}$ acts on $\widetilde{G}$ by $g\cdot(k,\gamma)= \gamma^{-1}gk$, so that
$$ E_{\tau,\rho} = (\widetilde{G}\times(V_\tau\otimes V_\rho))/_{(g\cdot(k,\gamma),w)\sim(g,(\tau(k)\otimes\rho(\gamma))w),\,g\in\widetilde{G},k\in\widetilde{K},\gamma\in\widetilde{\Gamma},w\in V_\tau\otimes V_\rho} $$
Note that this action is not faithful, but the elements in $\widetilde{K}\times\widetilde{\Gamma}$ acting trivially on $\widetilde{G}$ are of the form $(z,z)$, $z\in\widetilde{Z}$, and hence also act trivially on $V_\tau\otimes V_\rho$ by construction. We further remark that this does not define a topological vector bundle in the usual sense but only an orbifold vector bundle, because the fiber over a singular point is not a vector space but rather a quotient of a vector space by a finite group action. However, the orbisurface $X$ has a finite cover which is a manifold, and the orbifold vector bundle is induced by a topological vector bundle (in the usual sense) on the cover. We therefore abuse notation and talk about connections, metrics and differential operators on the bundle, meaning the corresponding objects on the finite cover acting on sections invariant under the finite group of Deck transformations. A more thorough discussion of orbifold vector bundles can be found in the work of Shen and Yu~\cite{SY22}.

Smooth sections of the bundle $E_{\tau,\rho}$ can be identified with smooth functions $f:\widetilde{G}\to V_\tau\otimes V_\rho$, such that
$$ f(\gamma gk) = \left[\tau(k)^{-1}\otimes\rho(\gamma)\right]f(g) \qquad (g\in\widetilde{G},k\in\widetilde{K},\gamma\in\widetilde{\Gamma}). $$

Note that if $\rho$ is trivial and $X$ has no singularities, the bundle $E_{\tau_m,1}\to X$ for $m\in2\Z$ is the bundle of Fourier modes of degree $m$, whose sections are identified with smooth functions $f : X_1 \to \C$ such that
$$ f(x,v) = f (x,R_\theta v)e^{i\pi m\theta} \qquad \mbox{for all }x\in X,v\in T_xX,|v|=1, $$
where $R_\theta$ is the rotation of angle $\theta$ in $T_xX$.

We now define a connection $\nabla^{E_{\tau,\rho}}$ on $E_{\tau,\rho}$ in terms of its associated covariant derivative. For this we identify vector fields $\mathfrak X\in C^\infty(X,TX)$ on $X$ with smooth functions $\mathfrak X:\widetilde{G}\to\mathfrak{p}$ such that $\mathfrak X(\gamma gk)=\Ad(k)|_{\mathfrak{p}}^{-1}\mathfrak X(g)$. Then,
$$ \nabla^{E_{\tau,\rho}}_{\mathfrak X}f(g) = \left.\frac{d}{dt}\right|_{t=0}f(g\exp(t\mathfrak X(g))) \qquad (f\in C^\infty(X,E_{\tau,\rho}),\mathfrak X\in C^\infty(X,TX)) $$
defines a covariant derivative and hence a connection on $E_{\tau,\rho}$. Note that in general there is no Hermitian metric on $E_{\tau,\rho}$ compatible with the connection, since the representation $\rho$ is not necessarily unitary.

\subsection{The twisted Bochner--Laplacian}
\label{subsec:Laplacians}

For a complex vector bundle $E\to X$ with covariant derivative $\nabla^E$, the second covariant derivative $(\nabla^E)^2$ is defined by
$$ (\nabla^E)^2_{\mathfrak X,\mathfrak X'} = \nabla_{\mathfrak X}^E\nabla_{\mathfrak X'}^E-\nabla^E_{\nabla_{\mathfrak X}^{\operatorname{LC}}\mathfrak X'} \qquad (\mathfrak X,\mathfrak X'\in C^\infty(X,TX)), $$
where $\nabla^{\operatorname{LC}}$ is the Levi--Civita connection on $TX$.
The negative of the trace of the second covariant derivative is the corresponding Bochner--Laplacian:
$$ \Delta_E = -\tr\left((\nabla^E)^2\right). $$

For $\rho$ and $\tau$ as in the previous section, the twisted Bochner--Laplacian $\Delta_{\tau,\rho}^\sharp$ is defined as the Bochner--Laplacian of the bundle $E_{\tau,\rho}$:
$$ \Delta_{\tau,\rho}^\sharp = \Delta_{E_{\tau,\rho}} = -\tr\left((\nabla^{E_{\tau,\rho}})^2\right).$$
Note that the orbisurface $X$ has a finite cover, which is a manifold. Therefore, the spectral theory of $\Delta_{\tau,\rho}^\sharp$ on $X$ is the same as the spectral theory of its lift to the finite cover, acting on sections invariants under the action of the finite group of Deck transformations. Hence, by \cite[Theorem 4.3]{Mk}, we have the following properties (see also for the manifold case the work of M\"{u}ller~\cite{M1}, and for the orbifold case the work of Fedosova~\cite{fedell} and Shen~\cite[Section 7]{Shen2020}):
\begin{enumerate}[(1)]
\item If we choose a Hermitian metric on $E_{\tau,\rho}$, then $\Delta_{\tau,\rho}^\sharp$ acts in $L^2(X,E_{\tau,\rho})$ with domain $C^\infty(X,E_{\tau,\rho})$. However, it is not a formally self-adjoint operator in general. Note that while the inner product on $L^2(X,E_{\tau,\rho})$ depends on the chosen inner product on $V_\rho$, the space $L^2(X,E_{\tau,\rho})$ as a topological vector space does not, thanks to the compactness of $X$.
\item $\Delta_{\tau,\rho}^\sharp$ is an elliptic second order differential operator with purely discrete spectrum $\spec(\Delta_{\tau,\rho}^\sharp)\subseteq\C$, consisting of generalized eigenvalues. The spectrum is contained in a translate of a positive cone in $\C$. (Here, a positive cone is a cone whose closure is contained in $\{z\in\C:\RE z>0\}\cup\{0\}$.) The generalized eigenspaces
$$ \{f\in L^2(X,E_{\tau,\rho}):(\Delta_{\tau,\rho}^\sharp-\mu\Id)^Nf=0\mbox{ for some }N\} $$
are finite-dimensional, contained in $C^\infty(X,E_{\tau,\rho})$, and their direct sum, as $\mu$ runs through $\spec(\Delta_{\tau,\rho}^\sharp)$, is dense in $L^2(X,E_{\tau,\rho})$.
\end{enumerate}

\subsection{The pre-trace formula}
\label{subsec:pretrace}

The heat operator $e^{-t\Delta_{\tau_m,\rho}^\sharp}$ corresponding to the twisted Laplacian $\Delta_{\tau_m,\rho}^\sharp$ is of trace class (follows e.g. from the Weyl Law in \cite[Lemma 2.2]{M1}) and by \cite[equation (5.6)]{M1} its integral kernel is the smooth function
$$ H_t^{\tau_m,\rho}(g_1,g_2) = \sum_{[\gamma]\subseteq\widetilde{\Gamma}/\widetilde{Z}} \rho(\gamma)\otimes H_t^{\tau_m}(g_2^{-1}\gamma g_1) \qquad (g_1,g_2\in\widetilde{G}), $$
where the summation is over the conjugacy classes of $\widetilde{\Gamma}/\widetilde{Z}\simeq\Gamma$ and the heat kernel of the Bochner--Laplace operator on $E_{\tau_m}\to\H$ is written as $(g_1,g_2)\mapsto H_t^{\tau_m}(g_2^{-1}g_1)$ with
$$ H_t^{\tau_m}\in(C^\infty(\widetilde{G})\otimes\End(V_{\tau_m}))^{\widetilde{K}\times\widetilde{K}}. $$
Here, $\widetilde{K}\times\widetilde{K}$ acts on $C^\infty(\widetilde{G})$ by left and right translation, on $\End(V_{\tau_m})$ by left and right composition with $\tau_m$, and on their tensor product by the tensor product of these actions. Note that the center $\widetilde Z$ of $\widetilde G$ lies in $\widetilde K$, hence each summand is independent on the choice of $\gamma$ relatively to $\widetilde Z$. The trace of $e^{-t\Delta_{\tau_m,\rho}^\sharp}$ can be computed in two different ways, by summing over the generalized eigenvalues of $\Delta_{\tau_m,\rho}^\sharp$ (the spectral side) and by integrating the heat kernel along the diagonal in $\widetilde{\Gamma}\backslash\widetilde{G}$ (the geometric side). For the spectral side we denote for an eigenvalue $\mu\in\spec(\Delta_{\tau_m,\rho}^\sharp)$ its algebraic multiplicity by
$$ \mult(\mu;\Delta_{\tau_m,\rho}^\sharp) = \dim\{f\in L^2(X,E_{\tau_m,\rho}):(\Delta_{\tau_m,\rho}^\sharp-\mu\Id)^Nf=0\mbox{ for some }N\}. $$
Note that $\mult(\mu;\Delta_{\tau_m,\rho}^\sharp)$ is independent of the chosen metric on $E_\rho$. For the geometric side, we use the standard arguments grouping conjugacy classes in $\widetilde{\Gamma}/\widetilde{Z}$ (see e.g., \cite[Section 2]{Wa}). This leads to the following pre-trace formula which was made rigorous by M\"{u}ller~\cite[Proposition 5.1]{M1} for manifolds and generalized to orbifolds by Shen~\cite[Theorem 7.1]{Shen2020}:
\begin{multline}
	\sum_{\mu\in\spec(\Delta_{\tau_m,\rho}^\sharp)}\mult(\mu;\Delta_{\tau_m,\rho}^\sharp)e^{-t\mu}\\
	= \sum_{[\gamma]\subseteq\widetilde{\Gamma}/\widetilde{Z}}\Vol(\widetilde{\Gamma}_\gamma\backslash\widetilde{G}_\gamma)\tr{\rho(\gamma)}\int_{\widetilde{G}_\gamma\backslash\widetilde{G}}\tr H_t^{\tau_m}(g^{-1}\gamma g)\,d\dot{g},\label{eq:PreTrace}
\end{multline}
where $\widetilde{G}_\gamma$ resp. $\widetilde{\Gamma}_\gamma$ denotes the centralizer of $\gamma\in\Gamma$ in $\widetilde{G}$ resp. $\widetilde{\Gamma}$, and the summation on the right hand side is over all conjugacy classes $[\gamma]$ of elements in $\widetilde{\Gamma}/\widetilde{Z}\simeq\Gamma$.

Since $\widetilde{\Gamma}/\widetilde{Z}\simeq\Gamma$ consists of the identity element $e$, hyperbolic and elliptic elements, we can compute the three contributions to the right-hand side in \cref{eq:PreTrace} separately.

\begin{rmrk}
	Since $m\in\Q$ we could just as well formulate everything in terms of a finite covering group $G_1$ of $G$ which is a connected, semisimple group with finite center. This formulation allows us to use the Harish-Chandra $L^q$-Schwartz space $\mathcal{C}^q(G_1)$ (see e.g. \cite[p. 161--162]{BM} for its definition). By \cite[Lemma 2.3 and Proposition 2.4]{BM}, the heat kernel $H_t^{\tau_m}$ is contained in $\mathcal{C}^q(G_1)\otimes\End(V_{\tau_m})$ for any $q>0$, and hence it is an admissible function in the sense of Gangolli~\cite[p. 407]{gangolli1977}. We can therefore apply the distribution character of a unitary representation of $G_1$ to $H_t^{\tau_m}$ in \cref{subsec:id}, \cref{subsec:hyp} and \cref{subsec:ell}.
\end{rmrk}

\subsection{The identity contribution}
\label{subsec:id}

The contribution of $\gamma=e$ to the right-hand side of \cref{eq:PreTrace} is
$$ \Vol(X)\dim(V_\rho)H_t^{\tau_m}(e).$$
We can employ the Plancherel formula for $L^2(\H,E_{\tau_m})$ from \cite[Lemma 5]{Hof94}. (Note that although \cite[Lemma 5]{Hof94} requires $|m|\leq1$, the formula only depends on $m+2\Z$, so it holds for arbitrary $m\in\Z$.) This gives
\begin{multline*}
	H_t^{\tau_m}(e) = \frac{1}{4\pi}\Bigg[\int_\R \Theta_{\sigma,\lambda}(H_t^{\tau_m})\frac{\lambda\sinh(2\pi\lambda)}{\cosh(2\pi\lambda)+\cos(\pi m)}\,d\lambda\\
	+ \sum_{n\equiv m+1\mod2}|n|\Theta_{n+\sign(n)}(H_t^{\tau_m})\Bigg],
\end{multline*}
where $\sigma$ is the restriction of $\tau_m$ to $\widetilde{M}$. By \cite[p. 2634]{MPf} we find that
\begin{equation}
	\Theta_{\sigma,\lambda}(H_t^{\tau_m}) = e^{-t(\lambda^2+\frac{1}{4})}\label{eq:PSCharacterHeatKernel}
\end{equation}
and
\begin{equation}
	\Theta_{m'}(H_t^{\tau_m}) = \begin{cases}e^{-\frac{|m'|}{2}(1-\frac{|m'|}{2})t}&\mbox{for $m=m'+2\sign(m')\ell$, $\ell\in\Z_{\geq0}$,}\\0&\mbox{else.}\end{cases}\label{eq:DSCharacterHeatKernel}
\end{equation}
Note that, in contrast to the case of for instance higher-dimensional hyperbolic manifolds, the expression for $\Theta_{\sigma,\lambda}(H_t^{\tau_m})$ is independent of $\sigma$ since $\widetilde{M}$ is discrete. Write $n+\sign(n)=\sign(m)(|m|-(\ell-1))$, $1\leq\ell<|m|$ odd, then the identity contribution becomes
\begin{multline*}
	\frac{\Vol(X)\dim(V_\rho)}{4\pi}\Bigg[\int_\R e^{-t(\lambda^2+\frac{1}{4})}\frac{\lambda\sinh(2\pi\lambda)}{\cosh(2\pi\lambda)+\cos(\pi m)}\,d\lambda\\
	+ \sum_{\substack{1\leq\ell<|m|\\\ell\textup{ odd}}}(|m|-\ell)e^{\frac{(|m|-\ell+1)(|m|-\ell-1)}{4}t}\Bigg].
\end{multline*}

\subsection{The hyperbolic contribution}
\label{subsec:hyp}

For a conjugacy class $[\gamma]\subseteq\widetilde{\Gamma}/\widetilde{Z}$ of hyperbolic elements we choose a representative $\gamma\in\widetilde{\Gamma}$ which is conjugate in $\widetilde{G}$ to $\widetilde{a}_{\ell(\gamma)}$, instead of $z\widetilde{a}_{\ell(\gamma)}$, $z\in\widetilde{Z}$.
Here, $\ell(\gamma)>0$ is the length of the geodesic corresponding to $[\gamma]$. Then, by \cite[Lemma 2]{Hof94} and \cite[Section 6]{Wa}, the contribution of $[\gamma]$ to the right-hand side of \cref{eq:PreTrace} is given by
$$ \tr\rho(\gamma)\frac{\ell(\gamma)}{n_\Gamma(\gamma)D(\gamma)}\frac{1}{2\pi}\int_\R \Theta_{\sigma,\lambda}(H_t^{\tau_m})e^{-i\ell(\gamma)\lambda}\,d\lambda, $$
where $\sigma$ is the restriction of $\tau_m$ to $\widetilde{M}$, $n_{\Gamma}(\gamma)$ is the multiplicity of the (not necessarily prime) geodesic $[\gamma]$ and $D(\gamma) = 2 \sinh(\frac {\ell(\gamma)} 2)$ is the Weyl denominator. 
By \cref{eq:PSCharacterHeatKernel}, the total hyperbolic contribution becomes
$$ \sum_{[\gamma]\textup{ hyp.}}\tr\rho(\gamma)\frac{\ell(\gamma)}{n_\Gamma(\gamma)D(\gamma)}\frac{1}{2\pi}\int_\R e^{-t(\lambda^2+\frac{1}{4})}e^{-i\ell(\gamma)\lambda}\,d\lambda. $$
Computing the integral,
we obtain
$$ \frac{e^{-\frac{t}{4}}}{2\sqrt{4\pi t}}\sum_{[\gamma]\textup{ hyp.}}\frac{\ell(\gamma)\tr\rho(\gamma)}{n_\Gamma(\gamma)\sinh(\frac{\ell(\gamma)}{2})}e^{-\frac{\ell(\gamma)^2}{4t}}. $$
We remark that this sum converges absolutely by the same arguments as in \cite[Proposition 4.1.1]{FS}.

\subsection{The elliptic contribution}
\label{subsec:ell}

For a conjugacy class $[\gamma]\subseteq\widetilde{\Gamma}/\widetilde{Z}$ of elliptic elements, we choose a representative $\gamma\in\widetilde{\Gamma}$ which is conjugate in $\widetilde{G}$ to $\widetilde{k}_{\theta(\gamma)}$ with $\theta(\gamma)\in(0,\pi)$. Let $M(\gamma)$ denote the order of the stabilizer $\widetilde{\Gamma}_\gamma/\widetilde{Z}$ of $\gamma$ in $\widetilde{\Gamma}/\widetilde{Z}$, then
$$ \Vol(\widetilde{\Gamma}_\gamma\backslash\widetilde{G}_\gamma)=\Vol(\Gamma_\gamma\backslash G_\gamma)=\Vol(\langle k_{\theta(\gamma)}\rangle\backslash\PSO(2))=\frac{1}{M(\gamma)} $$
and hence, by \cite[Lemma 4]{Hof94} (which again holds for all $m\in\Z$ since the formula only depends on $m+2\Z$), the contribution of $[\gamma]$ to the pre-trace formula \cref{eq:PreTrace} is given by
\begin{multline*}
	\frac{\tr\rho(\gamma)}{M(\gamma)}\cdot\frac{1}{2i\sin(\theta(\gamma))}\Bigg[\frac{i}{2}\int_\R\Theta_{\sigma,\lambda}(H_t^{\tau_m})\frac{\cosh(2(\pi-\theta(\gamma))\lambda)+e^{i\pi m}\cosh(2\theta(\gamma)\lambda)}{\cosh2\pi\lambda+\cos\pi m}\,d\lambda\\
	-\delta_{(|m|-1)\!\!\!\!\!\mod2}\frac{\Theta_1(H_t^{\tau_m})-\Theta_{-1}(H_t^{\tau_m})}{2}\\
	-\sum_{\substack{n\equiv m+1\!\!\!\!\mod2\\n\neq0}}\sign(n)e^{in\theta(\gamma)}\Theta_{n+\sign(n)}(H_t^{\tau_m})\Bigg],
\end{multline*}
where $\delta_{(|m|-1)\!\!\mod2}$ is meant to be $1$ for $|m|-1\equiv0\mod2$ and $0$ otherwise. Using \cref{eq:PSCharacterHeatKernel} and \cref{eq:DSCharacterHeatKernel}, the elliptic contribution becomes
\begin{multline*}
	\sum_{[\gamma]\textup{ ell.}} \frac{\tr\rho(\gamma)}{4M(\gamma)\sin(\theta(\gamma))}\Bigg[\int_\R e^{-t(\lambda^2+\frac{1}{4})}\frac{\cosh(2(\pi-\theta(\gamma))\lambda)+e^{i\pi m}\cosh(2\theta(\gamma)\lambda)}{\cosh(2\pi\lambda)+\cos(\pi m)}\,d\lambda\\
	+2i\sign(m)\sum_{\substack{1\leq\ell<|m|\\\ell\textup{ odd}}}e^{i\sign(m)(|m|-\ell)\theta(\gamma)}e^{\frac{(|m|-\ell+1)(|m|-\ell-1)}{4}t}\Bigg].
\end{multline*}
Note that this sum is always finite since $\Gamma$ only contains finitely many conjugacy classes of elliptic elements.

\subsection{The trace formula}
\label{subsec:STF}
Adding the identity, the hyperbolic and the elliptic contribution finally shows the trace formula. For the statement let
$$ A_{\tau_m,\rho}^\sharp=\Delta_{\tau_m,\rho}^\sharp-\frac{1}{4}. $$

\begin{theorem}[Selberg trace formula with non-unitary twist]\label{thm:TraceFormula}
	Let $\rho:\widetilde{\Gamma}\to\GL(V_\rho)$ be a finite-dimensional complex representation with $\rho(u)=e^{-im\pi}$, $m\in\R$. Then, for every $t>0$ we have
	\begin{align*}
		& \Tr(e^{-tA_{\tau_m,\rho}^\sharp})\\
		& \quad=\frac{\Vol(X)\dim(V_\rho)}{4\pi}\Bigg[\int_\R e^{-t\lambda^2}\frac{\lambda\sinh(2\pi\lambda)}{\cosh(2\pi\lambda)+\cos(\pi m)}\,d\lambda\\
		& \hspace{7cm}+ \sum_{\substack{1\leq\ell<|m|\\\ell\textup{ odd}}}(|m|-\ell)e^{(\frac{|m|-\ell}{2})^2t}\Bigg].\\
		& \qquad+\frac{1}{2\sqrt{4\pi t}}\sum_{[\gamma]\textup{ hyp.}}\frac{\ell(\gamma)\tr\rho(\gamma)}{n_\Gamma(\gamma)\sinh(\frac{\ell(\gamma)}{2})}e^{-\frac{\ell(\gamma)^2}{4t}}\\
		& \qquad+\sum_{[\gamma]\textup{ ell.}} \frac{\tr\rho(\gamma)}{4M(\gamma)\sin(\theta(\gamma))}\Bigg[\int_\R e^{-t\lambda^2}\frac{\cosh(2(\pi-\theta(\gamma))\lambda)+e^{i\pi m}\cosh(2\theta(\gamma)\lambda)}{\cosh(2\pi\lambda)+\cos(\pi m)}\,d\lambda\\
		& \hspace{4cm}+2i\sign(m)\sum_{\substack{1\leq\ell<|m|\\\ell\textup{ odd}}}e^{i\sign(m)(|m|-\ell)\theta(\gamma)}e^{(\frac{|m|-\ell}{2})^2t}\Bigg].
	\end{align*}
	Here, the summation is over the conjugacy classes $[\gamma]$ of hyperbolic, resp. elliptic, elements in $\widetilde{\Gamma}/\widetilde{Z}\simeq\Gamma$ and the representative $\gamma\in\widetilde{\Gamma}$ is chosen such that it is conjugate in $\widetilde{G}$ to $\widetilde{a}_{\ell(\gamma)}$ ($\ell(\gamma)>0$), resp. $\widetilde{k}_{\theta(\gamma)}$ ($\theta(\gamma)\in(0,\pi)$).
\end{theorem}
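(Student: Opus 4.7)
The strategy is to assemble the theorem directly from the pre-trace formula \eqref{eq:PreTrace} together with the three geometric contributions already worked out in \cref{subsec:id,subsec:hyp,subsec:ell}. First, I would exploit the trivial identity
$$\Tr\bigl(e^{-tA_{\tau_m,\rho}^\sharp}\bigr) = e^{t/4}\,\Tr\bigl(e^{-t\Delta_{\tau_m,\rho}^\sharp}\bigr),$$
so that the shift by $-1/4$ converts every factor $e^{-t(\lambda^2+1/4)}$ arising from the principal series characters \eqref{eq:PSCharacterHeatKernel} into the cleaner $e^{-t\lambda^2}$ appearing in the statement, while the discrete series exponents from \eqref{eq:DSCharacterHeatKernel} simplify via
$$\frac{(|m|-\ell+1)(|m|-\ell-1)}{4}+\frac{1}{4}=\left(\frac{|m|-\ell}{2}\right)^2.$$
Likewise, the hyperbolic prefactor $e^{-t/4}/(2\sqrt{4\pi t})$ reduces to $1/(2\sqrt{4\pi t})$ after the shift.

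Next I would feed the pre-trace formula \eqref{eq:PreTrace} into this framework and split the sum over conjugacy classes of $\widetilde{\Gamma}/\widetilde{Z}\simeq\Gamma$ into the identity, hyperbolic and elliptic parts. The identity term is handled in \cref{subsec:id} using the Plancherel formula from \cite[Lemma 5]{Hof94} applied to $L^2(\H,E_{\tau_m})$, together with the explicit values of the principal and discrete series characters on the heat kernel. The hyperbolic contribution follows from Hoffmann's orbital integral \cite[Lemma 2]{Hof94}, combined with the elementary Gaussian
$$\frac{1}{2\pi}\int_\R e^{-t\lambda^2}e^{-i\ell(\gamma)\lambda}\,d\lambda=\frac{1}{2\sqrt{4\pi t}}\,e^{-\ell(\gamma)^2/(4t)}.$$
The elliptic contribution is obtained via \cite[Lemma 4]{Hof94}, where the crucial point is that $\Vol(\widetilde{\Gamma}_\gamma\backslash\widetilde{G}_\gamma)=1/M(\gamma)$ since $\widetilde{\Gamma}_\gamma/\widetilde{Z}$ acts as a finite cyclic subgroup of $\PSO(2)$.

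The main technical obstacle, and the reason this assembly is non-trivial, is that the operator $\Delta_{\tau_m,\rho}^\sharp$ is not self-adjoint, so one has to justify that the heat operator is of trace class and that \eqref{eq:PreTrace} actually computes $\Tr(e^{-t\Delta_{\tau_m,\rho}^\sharp})$ as a sum over generalized eigenvalues counted with algebraic multiplicity; this is covered by the results of M\"uller~\cite{M1} and Shen~\cite{Shen2020} cited in \cref{subsec:pretrace}. One must then check that the three series converge in a compatible sense: absolute convergence of the hyperbolic sum was noted at the end of \cref{subsec:hyp} by adapting \cite[Proposition 4.1.1]{FS} to the non-unitary setting, and the elliptic sum is finite since $\Gamma$ has only finitely many elliptic conjugacy classes.

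Finally, summing the three contributions and multiplying by $e^{t/4}$ yields the stated trace formula term by term. I do not expect any conceptual surprise at this stage; the care is in tracking the normalizations $n_\Gamma(\gamma)$, $D(\gamma)=2\sinh(\ell(\gamma)/2)$ and $M(\gamma)$, the choice of representatives $\gamma$ conjugate to $\widetilde{a}_{\ell(\gamma)}$ (resp.\ $\widetilde{k}_{\theta(\gamma)}$) rather than to $z\widetilde{a}_{\ell(\gamma)}$ for $z\in\widetilde{Z}$, and the parity conditions in the discrete series sums, all of which ensure that the contribution of each conjugacy class of $\Gamma$ appears exactly once.
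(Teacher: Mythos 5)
Your proposal follows essentially the same route as the paper: the theorem is obtained exactly by inserting the pre-trace formula \eqref{eq:PreTrace} into the identity, hyperbolic and elliptic computations of the preceding subsections (Hoffmann's Plancherel formula and orbital integrals, the character values \eqref{eq:PSCharacterHeatKernel} and \eqref{eq:DSCharacterHeatKernel}, and the volume $1/M(\gamma)$ for elliptic classes), and then passing from $\Delta_{\tau_m,\rho}^\sharp$ to $A_{\tau_m,\rho}^\sharp=\Delta_{\tau_m,\rho}^\sharp-\tfrac14$, with the trace-class and generalized-eigenvalue issues for the non-self-adjoint Laplacian delegated to M\"uller and Shen just as in the paper. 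One small arithmetic slip: your displayed Gaussian should read $\frac{1}{2\pi}\int_\R e^{-t\lambda^2}e^{-i\ell(\gamma)\lambda}\,d\lambda=\frac{1}{2\sqrt{\pi t}}e^{-\ell(\gamma)^2/(4t)}$; the extra factor $\tfrac12$ turning this into the $\frac{1}{2\sqrt{4\pi t}}$ of the statement comes from the Weyl denominator $D(\gamma)=2\sinh\frac{\ell(\gamma)}{2}$, which you do track separately, so the final formula is unaffected.
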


\subsection{An application to the determination of multiplicities}
\label{subsec:mult}
In order to conclude that the residues of the logarithmic derivative of the Selberg zeta function are integers, we now deduce from the trace formula an expression for the difference of the multiplicities $\mult(\mu;\Delta_{\tau_m,\rho}^\sharp)$ for $m$ and $m+2$. For this, it is more convenient to work with $A_{\tau_m,\rho}^\sharp$. For $\mu\in\C$ we let
$$ L^2(X,E_{\tau_m,\rho})_\mu = \{f\in L^2(X,E_{\tau_m,\rho}):(A_{\tau_m,\rho}^\sharp-\mu\Id)^Nf=0\mbox{ for some }N\} $$
denote the corresponding generalized eigenspace of $A_{\tau_m,\rho}^\sharp$ and
$$ \mult(\mu;A_{\tau_m,\rho}^\sharp) = \dim L^2(X,E_{\tau_m,\rho})_\mu $$
its multiplicity.

Define $\kappa,x_\pm\in\sl(2,\C)$ by
$$ \kappa = E-F, \qquad x_\pm = \frac{1}{2}(H\pm i(E+F)), $$
then
$$ [\kappa,x_\pm] = \pm2ix_\pm, \qquad [x_+,x_-] = -i\kappa. $$
We remark that the complexification $\sl(2,\C)$ of the Lie algebra $\sl(2,\R)$ of $G$ has to be used here since the adjoint action of $\kappa$ on $\sl(2,\R)$ is not diagonalizable. The Casimir element $\Omega$ for $\sl(2,\C)$ can be expressed in terms of the two bases $H,E,F$ and $\kappa,x_+,x_-$ as
$$ \Omega = \frac{1}{4}(H^2+2EF+2FE) = -\frac{1}{4}(\kappa^2-2x_+x_--2x_-x_+). $$
Moreover, we can write
\begin{equation}
	\Omega = -\frac{1}{4}\kappa^2-\frac{1}{2}i\kappa+x_-x_+ = -\frac{1}{4}\kappa^2+\frac{1}{2}i\kappa+x_+x_-.\label{eq:AlternativeExpressionCasimir}
\end{equation}

For $u\in C^\infty(\H,E_{\tau_m})=(C^\infty(G)\otimes V_{\tau_m})^K$ we define $\partial_\pm u(g)=d\pi_R(x_\pm)u(g)$, where $d\pi_R$ denotes the Lie algebra representation corresponding to the  right regular representation $\pi_R$ of $G$ on $C^\infty(G)$. Then $[\kappa,x_\pm]=\pm 2ix_\pm$ implies
$$ \partial_\pm:C^\infty(\H,E_{\tau_m})\to C^\infty(\H,E_{\tau_{m\mp2}}). $$
Further, by \cref{eq:AlternativeExpressionCasimir} we find that for $u\in C^\infty(\H,E_{\tau_m})$:
\begin{align*}
	\partial_+\partial_-u &= d\pi_R(\Omega)+\frac{1}{4}d\pi_R(\kappa)^2-\frac{1}{2}id\pi_R(\kappa) = d\pi_L(\Omega)-\frac{m^2}{4}-\frac{m}{2},\\
	\partial_-\partial_+u &= d\pi_R(\Omega)+\frac{1}{4}d\pi_R(\kappa)^2+\frac{1}{2}id\pi_R(\kappa) = d\pi_L(\Omega)-\frac{m^2}{4}+\frac{m}{2},
\end{align*}
where $d\pi_L$ is the Lie algebra representation corresponding to the left regular representation $\pi_L$. Note that $d\pi_L(\Omega)=-\widetilde{\Delta}_{\tau_m,\rho}^\sharp=-\widetilde{A}_{\tau_m,\rho}^\sharp-\frac{1}{4}$, where $\widetilde{\Delta}_{\tau_m,\rho}^\sharp$ resp. $\widetilde{A}_{\tau_m,\rho}^\sharp$ denotes the lift of $\Delta_{\tau_m,\rho}^\sharp$ resp. $A_{\tau_m,\rho}^\sharp$ to $\H$. Since $\partial_+$ and $\partial_-$ are left-invariant, they induce operators on $C^\infty(X,E_{\tau_m,\rho})$ which, by the previous considerations, satisfy
\begin{equation}
	\partial_+\partial_-u = -A_{\tau_m,\rho}^\sharp-\frac{(m+1)^2}{4}, \qquad \partial_-\partial_+u = -A_{\tau_m,\rho}^\sharp-\frac{(m-1)^2}{4}.\label{eq:PartialPlusMinusVsLaplacian}
\end{equation}

\begin{lem}\label{lem:MultiplicitiesModM}
	For $\mu\neq-\frac{(m+1)^2}{4}$, we have:
	$$ \mult(\mu;A_{\tau_m,\rho}^\sharp) = \mult(\mu;A_{\tau_{m+2},\rho}^\sharp). $$
\end{lem}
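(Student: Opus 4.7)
The plan is to use the operators $\partial_\pm$ as ``raising'' and ``lowering'' operators between the generalized eigenspaces of $A_{\tau_m,\rho}^\sharp$ and $A_{\tau_{m+2},\rho}^\sharp$, and then use the composition identities \cref{eq:PartialPlusMinusVsLaplacian} to show these maps are injective in both directions.

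First I would establish the intertwining relations. Applying \cref{eq:PartialPlusMinusVsLaplacian} with $m$ replaced by $m+2$, we have $\partial_-\partial_+ = -A_{\tau_{m+2},\rho}^\sharp-\frac{(m+1)^2}{4}$ on $C^\infty(X,E_{\tau_{m+2},\rho})$, while on $C^\infty(X,E_{\tau_m,\rho})$ we already have $\partial_+\partial_- = -A_{\tau_m,\rho}^\sharp-\frac{(m+1)^2}{4}$. Composing on the left and right with $\partial_-$ shows
\[
\partial_-\,A_{\tau_m,\rho}^\sharp = -\partial_-\partial_+\partial_- - \tfrac{(m+1)^2}{4}\partial_- = A_{\tau_{m+2},\rho}^\sharp\,\partial_-,
\]
and symmetrically $\partial_+\,A_{\tau_{m+2},\rho}^\sharp = A_{\tau_m,\rho}^\sharp\,\partial_+$. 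Consequently, for any $N\in\N$,
\[
(A_{\tau_{m+2},\rho}^\sharp-\mu)^N\partial_- = \partial_-(A_{\tau_m,\rho}^\sharp-\mu)^N,
\]
and likewise with $\partial_+$, so $\partial_-$ restricts to a linear map
\[
\partial_-:L^2(X,E_{\tau_m,\rho})_\mu\longrightarrow L^2(X,E_{\tau_{m+2},\rho})_\mu
\]
and $\partial_+$ restricts to the reverse map.

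Next I would prove that both restricted maps are injective when $\mu\neq-\frac{(m+1)^2}{4}$. On $L^2(X,E_{\tau_m,\rho})_\mu$, the operator $A_{\tau_m,\rho}^\sharp-\mu$ is nilpotent; hence so is $A_{\tau_m,\rho}^\sharp+\frac{(m+1)^2}{4} - (\mu + \frac{(m+1)^2}{4})\Id$, and by \cref{eq:PartialPlusMinusVsLaplacian},
\[
\partial_+\partial_- = -A_{\tau_m,\rho}^\sharp - \tfrac{(m+1)^2}{4}
\]
acts on $L^2(X,E_{\tau_m,\rho})_\mu$ with unique generalized eigenvalue $-\mu-\frac{(m+1)^2}{4}\neq 0$. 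In particular $\partial_+\partial_-$ is invertible on this finite-dimensional space, forcing $\partial_-$ to be injective there. The identical argument applied to $\partial_-\partial_+ = -A_{\tau_{m+2},\rho}^\sharp-\frac{(m+1)^2}{4}$ on $L^2(X,E_{\tau_{m+2},\rho})_\mu$ shows that $\partial_+$ is injective there as well.

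Finally, since both generalized eigenspaces are finite-dimensional and we now have injective linear maps in each direction, we conclude $\mult(\mu;A_{\tau_m,\rho}^\sharp) = \mult(\mu;A_{\tau_{m+2},\rho}^\sharp)$. The only subtle point is verifying that $\partial_\pm$ preserve generalized (and not just honest) eigenspaces, which is why I would set up the intertwining identity at the level of arbitrary powers $N$ before commencing the dimension count; everything else is an immediate consequence of \cref{eq:PartialPlusMinusVsLaplacian}.
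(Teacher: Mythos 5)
Your proof is correct and follows essentially the same strategy as the paper's: use $\partial_\pm$ as intertwiners between the generalized eigenspaces of $A_{\tau_m,\rho}^\sharp$ and $A_{\tau_{m+2},\rho}^\sharp$, and invoke the composition identities \eqref{eq:PartialPlusMinusVsLaplacian} to conclude invertibility on each generalized eigenspace. The one small variation is that you derive the intertwining relation $\partial_-\,A_{\tau_m,\rho}^\sharp = A_{\tau_{m+2},\rho}^\sharp\,\partial_-$ by purely algebraic manipulation of the composition identities, whereas the paper obtains it directly from the structural observation that $\partial_\pm$ is built from the right regular representation while the Laplacian arises from the left regular representation, so they automatically commute; your route is slightly longer but self-contained, and you also spell out the injectivity-in-both-directions argument that the paper's phrase ``is an isomorphism whenever\dots'' leaves implicit.
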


\begin{proof}
	Since $\partial_\pm$ is defined in terms of the right regular representation and $A_{\tau_m,\rho}^\sharp$ in terms of the left regular representation, we have
	$$ \partial_\pm\circ A_{\tau_m,\rho}^\sharp = A_{\tau_{m\mp2},\rho}^\sharp\circ\partial_\pm. $$
	This implies
	$$ \partial_\pm:L^2(X,E_{\tau_m,\rho})_\mu\to L^2(X,E_{\tau_{m\mp2},\rho})_\mu. $$
	By \cref{eq:PartialPlusMinusVsLaplacian}, the map
	$$ \partial_+:L^2(X,E_{\tau_m,\rho})_\mu\to L^2(X,E_{\tau_{m-2},\rho})_\mu $$
	is an isomorphism whenever $\mu\neq-\frac{(m-1)^2}{4}$, and the map
	$$ \partial_-:L^2(X,E_{\tau_m,\rho})_\mu\to L^2(X,E_{\tau_{m+2},\rho})_\mu $$
	is an isomorphism whenever $\mu\neq-\frac{(m+1)^2}{4}$.
\end{proof}

\begin{theorem}\label{thm:DimensionEigenspaces}
	For all $m\in\R$ with $\rho(u)=e^{-im\pi}\cdot\Id_{V_\rho}$, we have
	\begin{multline*}
		\mult(-\tfrac{(m+1)^2}{4};A_{\tau_{m+2},\rho}^\sharp)-\mult(-\tfrac{(m+1)^2}{4};A_{\tau_m,\rho}^\sharp)\\
		= \frac{\Vol(X)\dim(V_\rho)}{4\pi}(m+1) +\sum_{[\gamma]\textup{ ell.}}\frac{i\tr\rho(\gamma)}{2M(\gamma)\sin(\theta(\gamma))}e^{i(m+1)\theta(\gamma)}.
	\end{multline*}
	In particular, the right-hand side is an integer.
\end{theorem}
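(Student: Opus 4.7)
The plan is to apply the Selberg trace formula \cref{thm:TraceFormula} twice, for $A_{\tau_m,\rho}^\sharp$ and $A_{\tau_{m+2},\rho}^\sharp$, and take the difference. On the spectral side, \cref{lem:MultiplicitiesModM} ensures that the contributions of all eigenvalues $\mu\neq-\frac{(m+1)^2}{4}$ cancel, so what remains is exactly $e^{t(m+1)^2/4}$ times the left-hand side of the claimed identity.

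On the geometric side, I first observe that every continuous ingredient is unaffected by $m\mapsto m+2$: the Plancherel-type integrals in the identity and elliptic contributions depend on $m$ only through the $2$-periodic functions $\cos\pi m$ and $e^{i\pi m}$, while the hyperbolic contribution does not involve $m$ at all. Therefore the geometric difference collapses onto the two finite discrete sums indexed by odd integers $\ell\in[1,|m|)$ and $\ell\in[1,|m+2|)$ appearing in the identity and elliptic terms.

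The core combinatorial step is then to compare these two sums. For $m\geq 0$, the substitution $\ell\mapsto\ell+2$ in the $(m+2)$-sum matches all its terms with terms of the $m$-sum except for one extra value $\ell=-1$, which contributes $(m+1)\,e^{t(m+1)^2/4}$ in the identity summand and $e^{i(m+1)\theta(\gamma)}\,e^{t(m+1)^2/4}$ in each elliptic summand. Multiplying by the prefactors $\frac{\Vol(X)\dim(V_\rho)}{4\pi}$ and $\frac{i\,\tr\rho(\gamma)}{2M(\gamma)\sin\theta(\gamma)}$ respectively, and cancelling the common exponential against the spectral side, yields the claimed formula; the integrality of the right-hand side is then automatic, being a difference of dimensions of finite-dimensional vector spaces.

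The one step that requires care is uniformity across the sign regimes of $m$: for $m<0$ the factor $\sign(m)$ occurring both in the elliptic prefactor and inside its exponential combines with the reversed direction of the index shift to produce exactly the same surviving term $e^{i(m+1)\theta(\gamma)}\,e^{t(m+1)^2/4}$, and in the edge case $|m|<1$ one of the two sums is empty yet the accounting still yields this single contribution from the nonempty one. I do not anticipate any analytic difficulty: once \cref{thm:TraceFormula} and \cref{lem:MultiplicitiesModM} are in hand, the argument is essentially combinatorial bookkeeping.
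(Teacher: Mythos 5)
Your argument is correct and is essentially the paper's own proof: subtract the trace formula of \cref{thm:TraceFormula} for $m$ from the one for $m+2$, use \cref{lem:MultiplicitiesModM} to reduce the spectral side to the single eigenvalue $-\tfrac{(m+1)^2}{4}$, observe that the Plancherel integrals (which depend on $m$ only through $\cos\pi m$ and $e^{i\pi m}$) and the hyperbolic term cancel, and match the two discrete sums via the shift $\ell\mapsto\ell+2$, which is exactly the ``careful comparison of terms'' in the paper. The only point you gloss over (as does the paper) is the single value $m=-1$, where \emph{both} discrete sums are empty, so the surviving elliptic term cannot come from the stated formula in \cref{thm:TraceFormula} but rather from the $\Theta_{\pm1}$-term of \cite[Lemma 4]{Hof94}, which is nonzero precisely when $m$ is an odd integer and was dropped in the displayed elliptic contribution.
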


\begin{proof}
	We subtract the trace formula in \cref{thm:TraceFormula} for $m$ from the one for $m+2$. By \cref{lem:MultiplicitiesModM}, the left-hand side becomes
	$$ e^{\frac{(m+1)^2}{4}t}\big(\mult(-\tfrac{(m+1)^2}{4};A_{\tau_{m+2},\rho}^\sharp)-\mult(-\tfrac{(m+1)^2}{4};A_{\tau_m,\rho}^\sharp)\big). $$
	Carefully comparing terms on the right-hand side for $m$ and $m+2$, we find
	\begin{equation*}
		\frac{\Vol(X)\dim(V_\rho)}{4\pi}(m+1)e^{\frac{(m+1)^2}{4}t}+\sum_{[\gamma]\textup{ ell.}}\frac{i\tr\rho(\gamma)}{2M(\gamma)\sin(\theta(\gamma))}e^{i(m+1)\theta(\gamma)}e^{\frac{(m+1)^2}{4}t}.\qedhere
	\end{equation*}
\end{proof}

\section{Twisted Ruelle and Selberg zeta functions}
\label{sec:Ruelle}
We establish the meromorphic continuation for the Selberg zeta function in \cref{subsec:mero}, and a functional equation in \cref{subsec:FE}. Then, we compute the behavior of the Ruelle zeta function at zero in \cref{subsec:Fried}, and prove \cref{theo:mainFried}.

\subsection{Meromorphic continuation}
\label{subsec:mero}

Every closed oriented geodesic $\gamma$ on $X$ lifts canonically to a homotopy class in $\widetilde{\Gamma}=\pi_1(X_1)$, still denoted by $\gamma$. The conjugacy class $[\gamma]$ in $\widetilde{\Gamma}/\widetilde{Z}\simeq\Gamma=\pi_1(X)$ consists of hyperbolic elements and conversely every such conjugacy class of hyperbolic elements contains exactly one representative of a closed geodesic.

A closed geodesic $\gamma$ on $X$ is called \emph{prime} if it cannot be written as a multiple of a shorter geodesic. 

Let $\rho:\widetilde{\Gamma}=\pi_1(X_1)\to\GL(V_\rho)$ be a finite-dimensional, complex representation. For $s\in\C$, we define the twisted Selberg zeta function
$$ Z(s;\rho) = \prod_{\gamma\text{ prime}}\prod_{k=0}^\infty\det\left(\Id-\rho(\gamma)e^{-(s+k)\ell(\gamma)}\right) $$
and the twisted Ruelle zeta function
$$ R(s;\rho) = \prod_{\gamma\text{ prime}}\det\left(\Id-\rho(\gamma)e^{-s\ell(\gamma)}\right), $$
where the products run over the prime closed geodesics in $X$.

It is shown in \cite[Theorem 3.1]{fedosova2020meromorphic} that $Z(s;\rho)$, and hence $R(s;\rho)$, converges for $s$ in some right half plane of $\C$ and defines a holomorphic function on this half plane (see also \cite[Section 1.2]{Wo} for the torsion-free case). We note that the logarithmic derivative 
$L(s;\rho)$ of $Z(s;\rho)$ is given by
$$ L(s;\rho):= \frac{d}{ds}\log Z(s;\rho) = \sum_\gamma\frac{\ell(\gamma)\tr(\rho(\gamma))}{2n_\Gamma(\gamma)\sinh(\frac{\ell(\gamma)}{2})}e^{-(s-\frac{1}{2})\ell(\gamma)}, $$
where the summation is over \emph{all} closed geodesics. 

Moreover, we have
\begin{equation}
\label{eq:RuelSel} R(s;\rho) = \frac{Z(s;\rho)}{Z(s+1;\rho)}.
\end{equation}

To show meromorphic continuation of $Z(s;\rho)$ and $R(s;\rho)$ to $s\in\C$, we let $s_1=s\in\C$ and $s_2\in\C$ with $\RE\big((s_1-\frac{1}{2})^2\big),\RE\big((s_2-\frac{1}{2})^2\big)$ sufficiently large and insert the trace formula from \cref{thm:TraceFormula} into the resolvent identity
\begin{multline*}
	\tr\left(\left(A_{\tau_m,\rho}^\sharp+(s_1-\tfrac{1}{2})^2\right)^{-1}-\left(A_{\tau_m,\rho}^\sharp+(s_2-\tfrac{1}{2})^2\right)^{-1}\right)\\
	= \int_0^\infty \left(e^{-t(s_1-\frac{1}{2})^2}-e^{-t(s_2-\frac{1}{2})^2}\right)\tr(e^{-tA_{\tau_m,\rho}^\sharp})\,dt,
\end{multline*}
which holds by the same arguments as in \cite[Section 4.2]{FS} (especially the proof of \cite[Lemma 4.2.2]{FS} is identical). Note that the individual resolvents on the left hand side are not of trace class, but their difference is (see \cite[p.~51]{FS} for details). In this way, we obtain
\begin{multline}
	\tr\left(\left(A_{\tau_m,\rho}^\sharp+(s_1-\tfrac{1}{2})^2\right)^{-1}-\left(A_{\tau_m,\rho}^\sharp+(s_2-\tfrac{1}{2})^2\right)^{-1}\right)\\
	= I(s_1,s_2;\rho)+H(s_1,s_2;\rho)+E(s_1,s_2;\rho)\label{eq:ResolventIdentityWithIdHypEllContributions},
\end{multline}
with the obvious meaning of the identity contribution $I(s_1,s_2;\rho)$, the hyperbolic contribution $H(s_1,s_2;\rho)$ and the elliptic contribution $E(s_1,s_2;\rho)$. For the computation of these three terms we choose $|m|\leq1$ which is possible since $m$ is determined by $\rho(u)=e^{-im\pi}$ and hence only determined by $\rho$ modulo $2\Z$. Then, all contributions from the (relative) discrete series vanish.

We first compute the identity contribution
\begin{multline}
		I(s_1,s_2;\rho) = \frac{\Vol(X)\dim(V_\rho)}{4\pi}\int_0^\infty\left(e^{-t(s_1-\frac{1}{2})^2}-e^{-t(s_2-\frac{1}{2})^2}\right)\\
		\times\int_\R e^{-t\lambda^2}\frac{\lambda\sinh(2\pi\lambda)}{\cosh(2\pi\lambda)+\cos(\pi m)}\,d\lambda\,dt.
\end{multline}
Interchanging the integrals shows that
\begin{equation*}
	= \frac{\Vol(X)\dim(V_\rho)}{4\pi}\int_\R\frac{\lambda\sinh(2\pi\lambda)}{\cosh(2\pi\lambda)+\cos(\pi m)}\left(\frac{1}{\lambda^2+(s_1-\frac{1}{2})^2}-\frac{1}{\lambda^2+(s_2-\frac{1}{2})^2}\right)\,d\lambda.
\end{equation*}
This integral is computed in \cite[Chapter 10, Lemma 2.4]{He2}:
\begin{equation}
	= \frac{\Vol(X)\dim(V_\rho)}{4\pi}\sum_{n=0}^\infty\Bigg[\frac{1}{n+\frac{m}{2}+s_1}+\frac{1}{n-\frac{m}{2}+s_1}-\frac{1}{n+\frac{m}{2}+s_2}-\frac{1}{n-\frac{m}{2}+s_2}\Bigg]\label{eq:IdentityFinal}.
\end{equation}

The hyperbolic contribution
\begin{align*}
	H(s_1,s_2;\rho) ={}& \int_0^\infty\left(e^{-t(s_1-\frac{1}{2})^2}-e^{-t(s_2-\frac{1}{2})^2}\right)\\
	&\hspace{1.5cm}\times\frac{1}{2\sqrt{4\pi t}}\sum_{[\gamma]\textup{ hyp.}}\frac{\ell(\gamma)\tr(\rho(\gamma))}{n_{\Gamma}(\gamma)\sinh(\frac{\ell(\gamma)}{2})}e^{-\frac{\ell(\gamma)^2}{4t}}\,dt
\end{align*}
was calculated in \cite[Proposition 4.2.4 \& 4.2.5]{FS} to be
$$ = \frac{L(s_1;\rho)}{2(s_1-\frac{1}{2})}-\frac{L(s_2;\rho)}{2(s_2-\frac{1}{2})}. $$

Finally, the elliptic contribution is given by
\begin{multline*}
	E(s_1,s_2;\rho) = \sum_{[\gamma]\textup{ ell.}} \frac{\tr\rho(\gamma)}{4M(\gamma)\sin(\theta(\gamma))}\int_0^\infty\left(e^{-t(s_1-\frac{1}{2})^2}-e^{-t(s_2-\frac{1}{2})^2}\right)\\
	\times\int_\R e^{-t\lambda^2}\frac{\cosh(2(\pi-\theta(\gamma))\lambda)+e^{i\pi m}\cosh(2\theta(\gamma)\lambda)}{\cosh(2\pi\lambda)+\cos(\pi m)}\,d\lambda\,dt.
\end{multline*}
Note that summation and integration can be interchanged since the sum is finite. Interchanging the two integrals and computing the integral over $t$ gives
\begin{multline*}
	= \sum_{[\gamma]\textup{ ell.}} \frac{\tr\rho(\gamma)}{4M(\gamma)\sin(\theta(\gamma))}\int_\R\left(\frac{1}{\lambda^2+(s_1-\frac{1}{2})^2}-\frac{1}{\lambda^2+(s_2-\frac{1}{2})^2}\right)\\
	\times\frac{\cosh(2(\pi-\theta(\gamma))\lambda)+e^{i\pi m}\cosh(2\theta(\gamma)\lambda)}{\cosh(2\pi\lambda)+\cos(\pi m)}\,d\lambda.
\end{multline*}
The integral over $\lambda$ is computed in \cite[Chapter 10, Lemma 2.7]{He2}:
\begin{multline}
	= \sum_{[\gamma]\textup{ ell.}} \frac{\tr\rho(\gamma)}{4M(\gamma)\sin(\theta(\gamma))}\Bigg[\frac{i}{s_1-\frac{1}{2}}\sum_{n=0}^\infty\Bigg(\frac{e^{-2i\theta(\gamma)(n-\frac{m}{2}+\frac{1}{2})}}{n-\frac{m}{2}+s_1}-\frac{e^{2i\theta(\gamma)(n+\frac{m}{2}+\frac{1}{2})}}{n+\frac{m}{2}+s_1}\Bigg)\\
	-\frac{i}{s_2-\frac{1}{2}}\sum_{n=0}^\infty\Bigg(\frac{e^{-2i\theta(\gamma)(n-\frac{m}{2}+\frac{1}{2})}}{n-\frac{m}{2}+s_2}-\frac{e^{2i\theta(\gamma)(n+\frac{m}{2}+\frac{1}{2})}}{n+\frac{m}{2}+s_2}\Bigg)\Bigg].\label{eq:EllipticFinal}
\end{multline}

\begin{prop}\label{prop:LMero}
	The logarithmic derivative $L(s;\rho)$ of the twisted Selberg zeta function $Z(s;\rho)$ has a meromorphic extension to the whole complex plane~$\C$. Its poles are simple and given by the following formal expression:
	\begin{multline*}
		\sum_{j=0}^\infty\left[\frac{1}{s-\frac{1}{2}-i\mu_j}+\frac{1}{s-\frac{1}{2}+i\mu_j}\right]\\
		+\frac{\Vol(X)\dim(V_\rho)}{4\pi}\sum_{n=0}^\infty\left[\frac{1-2s}{s+\frac{m}{2}+n}+\frac{1-2s}{s-\frac{m}{2}+n}\right]\\
		+i\sum_{[\gamma]\textup{ ell.}}\frac{\tr\rho(\gamma)}{2M(\gamma)\sin(\theta(\gamma))}\sum_{n=0}^\infty\left[\frac{e^{2i\theta(\gamma)(n+\frac{m}{2}+\frac{1}{2})}}{s+\frac{m}{2}+n}-\frac{e^{-2i\theta(\gamma)(n-\frac{m}{2}+\frac{1}{2})}}{s-\frac{m}{2}+n}\right]
	\end{multline*}
	where $(\frac{1}{4}+\mu_j^2)_{j\in\Z_{\geq0}}\subseteq\C$ are the eigenvalues of $\Delta_{\tau_m,\rho}^\sharp$, counted with algebraic multiplicity.
\end{prop}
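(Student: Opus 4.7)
The plan is to solve the resolvent identity \eqref{eq:ResolventIdentityWithIdHypEllContributions} for $L(s;\rho)$ and then read off the singularities term by term. Fixing $s_2 \in \C$ with $\Re(s_2-\tfrac12)^2$ sufficiently large, we have, using the hyperbolic contribution computed from \cite[Proposition 4.2.4 \& 4.2.5]{FS},
\[
\frac{L(s;\rho)}{2(s-\tfrac12)} = \frac{L(s_2;\rho)}{2(s_2-\tfrac12)} + \tr\!\left(\bigl(A_{\tau_m,\rho}^\sharp+(s-\tfrac12)^2\bigr)^{-1}-\bigl(A_{\tau_m,\rho}^\sharp+(s_2-\tfrac12)^2\bigr)^{-1}\right) - I(s,s_2;\rho) - E(s,s_2;\rho).
\]
Since the right-hand side is, term by term, meromorphic in $s$, this will yield the desired meromorphic extension of $L(s;\rho)$ to all of $\C$, and the residues can be determined by multiplying through by $2(s-\tfrac12)$ and collecting poles.

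First I would analyze the resolvent trace. The operator $A_{\tau_m,\rho}^\sharp$ has purely discrete spectrum with generalized eigenvalues $\mu_j^2$ (so that the eigenvalues of $\Delta_{\tau_m,\rho}^\sharp$ are $\tfrac14+\mu_j^2$), each with finite algebraic multiplicity equal to the dimension of the associated generalized eigenspace. Using the Riesz projections onto the generalized eigenspaces, one writes
\[
\tr\bigl(A_{\tau_m,\rho}^\sharp+(s-\tfrac12)^2\bigr)^{-1} = \sum_j \frac{\dim L^2(X,E_{\tau_m,\rho})_{\mu_j^2}}{(s-\tfrac12)^2+\mu_j^2},
\]
understood as a meromorphic function of $s$ (each $\mu_j^2$ appearing the appropriate number of times). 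After subtracting the value at $s_2$ and multiplying by $2(s-\tfrac12)$, a standard partial fraction manipulation gives exactly the first line of the claimed expression:
\[
\frac{2(s-\tfrac12)}{(s-\tfrac12-i\mu_j)(s-\tfrac12+i\mu_j)} = \frac{1}{s-\tfrac12-i\mu_j}+\frac{1}{s-\tfrac12+i\mu_j}.
\]
The contributions coming from the $s_2$-terms are constants with respect to $s$ and are absorbed into the holomorphic part.

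Next I would treat the identity and elliptic contributions. The formula \eqref{eq:IdentityFinal} expresses $I(s,s_2;\rho)$ as a sum that is meromorphic in $s$ with simple poles at $s = -\tfrac{m}{2}-n$ and $s = \tfrac{m}{2}-n$, $n \in \Z_{\geq 0}$. Multiplying $-I(s,s_2;\rho)$ by $2(s-\tfrac12)$ and keeping only the $s$-dependent terms gives
\[
\frac{\Vol(X)\dim(V_\rho)}{4\pi}\sum_{n=0}^\infty\left[\frac{1-2s}{s+\frac{m}{2}+n}+\frac{1-2s}{s-\frac{m}{2}+n}\right],
\]
which is the second line. Similarly, \eqref{eq:EllipticFinal} expresses $E(s,s_2;\rho)$ as a finite sum (over elliptic conjugacy classes) of meromorphic functions with simple poles at the same locations. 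Multiplying $-E(s,s_2;\rho)$ by $2(s-\tfrac12)$ absorbs the factor $\tfrac{i}{s-\tfrac12}$ and yields, after sign adjustment, the third line
\[
i\sum_{[\gamma]\textup{ ell.}}\frac{\tr\rho(\gamma)}{2M(\gamma)\sin\theta(\gamma)}\sum_{n=0}^\infty\left[\frac{e^{2i\theta(\gamma)(n+\frac{m}{2}+\frac{1}{2})}}{s+\frac{m}{2}+n}-\frac{e^{-2i\theta(\gamma)(n-\frac{m}{2}+\frac{1}{2})}}{s-\frac{m}{2}+n}\right].
\]

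The main subtlety I anticipate is the non-self-adjointness of $A_{\tau_m,\rho}^\sharp$: the eigenvalues $\mu_j^2$ need not be real or positive, and the operator may fail to be diagonalizable. Hence I must invoke the generalized eigenspace decomposition together with the fact that the heat semigroup identity used in \eqref{eq:ResolventIdentityWithIdHypEllContributions} applies at the level of traces, so only the algebraic multiplicities appear — this is consistent with the way multiplicities enter the pre-trace formula \eqref{eq:PreTrace}. A second point to be careful with is that each of the three series in the claimed formal expression may fail to converge absolutely on its own; what is meromorphic is the full combination coming from the identity, and the formal expression should be interpreted as the sum of the principal parts of $L(s;\rho)$ at its poles (plus a globally holomorphic remainder), as is standard in this type of statement. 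Grouping the $(s,s_2)$-differences before splitting into individual summands makes the manipulation rigorous; the residue calculation is then an immediate consequence of the explicit formulas.
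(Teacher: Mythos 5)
Your proposal is correct and follows essentially the same route as the paper: solve the resolvent identity \eqref{eq:ResolventIdentityWithIdHypEllContributions} for $L(s;\rho)$, multiply by $2(s-\tfrac12)$, and read off the poles from \eqref{eq:IdentityFinal}, \eqref{eq:EllipticFinal} and the eigenvalue expansion of the resolvent-difference trace. The only difference is that you spell out the generalized-eigenspace (Riesz projection) expansion and the partial-fraction step directly, where the paper cites \cite[Proposition 4.2.5]{FS}; your closing caveat that only the $(s,s_2)$-differences are trace class and that the expression is to be read as a formal sum of principal parts matches the paper's intent.
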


\begin{proof}
	Fix $s_2\in\C$ with $\RE\big((s_2-\frac{1}{2})^2\big)$ sufficiently large and let $s=s_1\in\C$. Multiplying \cref{eq:ResolventIdentityWithIdHypEllContributions} by $2(s_1-\frac{1}{2})$ gives
	\begin{multline*}
		L(s;\rho) = 2(s-\tfrac{1}{2})\tr\left(\left(A_\rho^\sharp+(s-\tfrac{1}{2})^2\right)^{-1}-\left(A_\rho^\sharp+(s_2-\tfrac{1}{2})^2\right)^{-1}\right)\\
		-2(s-\tfrac{1}{2})I(s,s_2;\rho)-2(s-\tfrac{1}{2})E(s,s_2;\rho)+\frac{s-\frac{1}{2}}{s_2-\frac{1}{2}}L(s_2;\rho).
	\end{multline*}
	Together with \cref{eq:IdentityFinal} and \cref{eq:EllipticFinal}, this shows that $L(s;\rho)$ extends meromorphically to all $s\in\C$. The contribution of the first two terms to the poles of $L(s;\rho)$ was obtained in \cite[Proposition 4.2.5]{FS}. The last term is holomorphic in $s$, and for the term $E(s,s_2;\rho)$ one can use \cref{eq:EllipticFinal}.
\end{proof}

\begin{thm}[Meromorphic continuation of the Selberg zeta function]
\label{theo:mero}
	The twisted Selberg zeta function $Z(s;\rho)$ has a meromorphic extension to the whole complex plane $\C$ with poles and zeros given by the formal product
	\begin{multline*}
		\prod_{j=0}^\infty\left(s-\frac{1}{2}-i\mu_j\right)\left(s-\frac{1}{2}+i\mu_j\right)\\
		\times\prod_{n=0}^\infty\left(s+\frac{m}{2}+n\right)^{N_+(m,n)}\left(s-\frac{m}{2}+n\right)^{N_-(m,n)},
	\end{multline*}
	where $(\frac{1}{4}+\mu_j^2)_{j\in\Z_{\geq0}}\subseteq\C$ are the eigenvalues of $\Delta_{\tau_m,\rho}^\sharp$, counted with algebraic multiplicity, and
	\begin{multline*}
		N_\pm(m,n) = \frac{\Vol(X)\dim(V_\rho)}{4\pi}(\pm m+2n+1)\\
		\pm i\sum_{[\gamma]\textup{ ell.}}\frac{\tr\rho(\gamma)}{2M(\gamma)\sin(\theta(\gamma))}e^{\pm2i\theta(\gamma)(n\pm\frac{m}{2}+\frac{1}{2})}.
	\end{multline*}
\end{thm}

\begin{proof}
	In view of \cref{prop:LMero} it suffices to show that the residues of $L(s;\rho)$ are integers. For the poles at $s=\frac{1}{2}\pm i\mu_j$ the residue is the algebraic multiplicity of $\mu_j$. For $s=\mp\frac{m}{2}-n$ the residue is $N_\pm(m,n)$ which is an integer by \cref{thm:DimensionEigenspaces}. If these two poles coincide, one has to add up the two residues.
\end{proof}

\begin{rmrk}
	Since $\Delta_{\tau_m,\rho}^\sharp$ is in general not self-adjoint, its eigenvalues might be complex, although contained in a positive cone in $\C$. It may therefore happen that finitely many of the singularities $\frac{1}{2}\pm i\mu_j$ of $L(s;\rho)$ coincide with finitely many of the singularities $\pm\frac{m}{2}-n$. In particular, we are not able to decide more precisely what the poles and zeros of the Selberg zeta function in \cref{theo:mero} are.
\end{rmrk}

\begin{coro}\label{coro:SelbergVanishingAtZero}
	The order of vanishing of the Selberg zeta function $Z(s;\rho)$ at $s=0$ is
	$$ \mult(0;\Delta_{\tau_m,\rho}^\sharp) + \begin{cases}(2g-2+r)\dim(V_\rho)&\mbox{for $m=0$,}\\0&\mbox{for $m\neq0$.}\end{cases} $$
\end{coro}

\begin{proof}
	By \cref{theo:mero} the order of vanishing equals the multiplicity of the eigenvalue $0$ of $\Delta_{\tau_m,\rho}^\sharp$ plus additionally $N_+(0,0)+N_-(0,0)$ in the case $m=0$. Note that
	$$ N_+(0,0)+N_-(0,0) = \frac{\Vol(X)\dim(V_\rho)}{2\pi}-\sum_{[\gamma]\textup{ ell.}}\frac{\tr\rho(\gamma)}{M(\gamma)}. $$
	To compute the sum, we group the elliptic elements into the subsets $\{c_j,c_j^2,\ldots,c_j^{\nu_j-1}\}$ for $j=1,\ldots,r$. Fix $j=1,\ldots,r$ and let $\alpha$ be an eigenvalue of $\rho(c_j)$, then the contribution of $\alpha$ to the sum $\sum\tr\rho(\gamma)$ is
	$$ \sum_{i=1}^{\nu_j-1}\alpha^i = \frac{\alpha-\alpha^{\nu_j}}{1-\alpha} = -1, $$
	because $\rho(c_j)^{\nu_j}=\rho(c_j^{\nu_j})=\rho(u)=\Id$. It follows that
	$$ \sum_{i=1}^{\nu_j-1}\frac{\tr(c_j^i)}{M(c_j^i)} = -\frac{\dim(V_\rho)}{\nu_j} $$
	and hence
	$$ N_+(0,0)+N_-(0,0) = \frac{\Vol(X)\dim(V_\rho)}{2\pi}+\dim(V_\rho)\sum_{j=1}^r\frac{1}{\nu_j}. $$
	The claim now follows from the Gauss--Bonnet theorem for $X$ which asserts that
	\begin{equation}
		\frac{\Vol(X)}{2\pi} = -\chi(X) = 2g-2+r-\sum_{j=1}^r\frac{1}{\nu_j}.\label{eq:GaussBonnet}\qedhere
	\end{equation}
\end{proof}

\subsection{The functional equation}
\label{subsec:FE}
From the trace formula we further conclude:

\begin{thm}[Functional equation]
\label{theo:Funct}
	The twisted Selberg zeta function $Z(s;\rho)$ satisfies the following functional equation:
	\begin{multline}
		\eta(s;\rho) = \frac{Z(s;\rho)}{Z(1-s;\rho)} = \exp\Bigg(\int_0^{s-\frac{1}{2}}\dim(V_\rho)\Vol(X)\frac{\xi\sin(2\pi\xi)}{\cos(2\pi\xi)+\cos(\pi m)}\\
		- \sum_{[\gamma]\textup{ ell.}}\frac{\pi\tr\rho(\gamma)}{M(\gamma)\sin(\theta(\gamma))}\frac{\cos(2(\pi-\theta(\gamma))\xi)+e^{i\pi m}\cos(2\theta(\gamma)\xi)}{\cos(2\pi\xi)+\cos(\pi m)}\,d\xi\Bigg),\label{eq:FunctionalEquationSelberg}
	\end{multline}
	where the integral is along any contour from $0$ to $s-\frac{1}{2}$ in the complex plane avoiding the zeros of the denominator.
\end{thm}

\begin{rmrk}
Note that the residues of the integrand in \cref{eq:FunctionalEquationSelberg} are integers and therefore it does not matter which contour from $0$ to $s-\frac{1}{2}$ is used for the integral. In fact, the integrand has poles at $\xi\in\frac{1}{2}\pm\frac{m}{2}+\Z$, and using the identity
$$ \Res\left(\frac{f'(\xi)}{f(\xi)},\xi_0\right) = \operatorname{ord}_{\xi_0}(f) $$
for $f(\xi)=\cos(2\pi\xi)+\cos(\pi m)$, it can be shown that the residue at $\xi=\frac{1}{2}\pm\frac{m}{2}+n$ equals $\operatorname{ord}_{\xi_0}(f)$ times
$$ -\frac{\Vol(X)\dim(V_\rho)}{4\pi}(\pm m+2n+1) \mp i\sum_{[\gamma]\\\textup{ ell.}}\frac{\tr\rho(\gamma)}{2M(\gamma)\sin(\theta(\gamma))}e^{\pm2i\theta(\gamma)(n\pm\frac{m}{2}+\frac{1}{2})} $$
which is an integer by \cref{thm:DimensionEigenspaces}.
\end{rmrk}

\begin{proof}
	After having computed the right hand side of \cref{eq:ResolventIdentityWithIdHypEllContributions} explicity, it is obviously meromorphic in $s_1,s_2\in\C$. We can therefore plug in $s_1=s$ and $s_2=1-s$ to obtain
	$$ 0 = I(s,1-s;\rho)+E(s,1-s;\rho)+H(s,1-s;\rho). $$
	The identity contribution equals
	\begin{align*}
		I(s,1-s;\rho) &= -\frac{\dim(V_\rho)\Vol(X)}{4}\big(\tan(\pi(s-\tfrac{1}{2}+\tfrac{m}{2}))+\tan(\pi(s-\tfrac{1}{2}-\tfrac{m}{2}))\big),
	\end{align*}
	by \cite[proof of Theorem 4.2.8]{FS}. Using
	\begin{equation*}
		\tan(x+y)+\tan(x-y) = \frac{2\sin(2x)}{\cos(2x)+\cos(2y)},\label{eq:TangentIdentity}
	\end{equation*}
	this becomes
	$$ I(s,1-s;\rho) = -\frac{\dim(V_\rho)\Vol(X)}{2}\frac{\sin(2\pi(s-\frac{1}{2}))}{\cos(2\pi(s-\frac{1}{2}))+\cos(\pi m)}. $$
	The hyperbolic contribution becomes
	$$ H(s,1-s;\rho) = \frac{L(s;\rho)+L(1-s;\rho)}{2(s-\frac{1}{2})} $$
	and the elliptic contribution is
	\begin{align*}
		& E(s,1-s;\rho)\\
		& =\sum_{[\gamma]\textup{ ell.}} \frac{\tr\rho(\gamma)}{4M(\gamma)\sin(\theta(\gamma))}\Bigg[\frac{i}{s-\frac{1}{2}}\sum_{n\in\Z}\Bigg(\frac{e^{2i\theta(\gamma)(n+\frac{m}{2}+\frac{1}{2})}}{s-\frac{m}{2}-n-1}-\frac{e^{2i\theta(\gamma)(n+\frac{m}{2}+\frac{1}{2})}}{s+\frac{m}{2}+n}\Bigg)\Bigg]\\
		& =\sum_{[\gamma]\textup{ ell.}} \frac{\pi\tr\rho(\gamma)}{2M(\gamma)(s-\frac{1}{2})\sin(\theta(\gamma))}\frac{\cos(2(\pi-\theta(\gamma))(s-\frac{1}{2}))+e^{i\pi m}\cos(2\theta(\gamma)(s-\frac{1}{2}))}{\cos(2\pi(s-\frac{1}{2}))+\cos(\pi m)}
	\end{align*}
	by \cite[equation (**) on page 444]{He2}.
	This implies
	\begin{multline} 
	\label{eq:logeta}
		\frac{d}{ds}\log\frac{Z(s;\rho)}{Z(1-s;\rho)} = L(s;\rho)+L(1-s;\rho)\\
		= \dim(V_\rho)\Vol(X)\frac{(s-\frac{1}{2})\sin(2\pi(s-\frac{1}{2}))}{\cos(2\pi(s-\frac{1}{2}))+\cos(\pi m)}\\
		- \sum_{[\gamma]\textup{ ell.}}\frac{\pi\tr\rho(\gamma)}{M(\gamma)\sin(\theta(\gamma))}\frac{\cos(2(\pi-\theta(\gamma))(s-\frac{1}{2}))+e^{i\pi m}\cos(2\theta(\gamma)(s-\frac{1}{2}))}{\cos(2\pi(s-\frac{1}{2}))+\cos(\pi m)}
	\end{multline}
	and the claim follows by integration and exponentiation.
\end{proof}

\subsection{Fried's conjecture}
\label{subsec:Fried}
We compute the behavior of the Ruelle zeta function at $s=0$, and we prove \cref{theo:mainFried}.

\medbreak

We start with the following proposition:
\begin{prop}\label{prop:RuelleVsEtaAtZero}
The Ruelle zeta function extends meromorphically to the whole complex plane. Moreover, 
$$R(s; \rho) \sim_{s \to 0} \eta(s+1;\rho)^{-1}\times\begin{cases}(-1)^{\mult(0;\Delta_{\tau_m,\rho}^\sharp)+(2g-2+r)\dim(V_\rho)}&\mbox{for $m=0$,}\\(-1)^{\mult(0;\Delta_{\tau_m,\rho}^\sharp)}&\mbox{for $m\neq0$,}\end{cases} $$
with $\eta(s;\rho)$ as in \cref{theo:Funct}.
\end{prop}

We remark that a proof of the fact that $R(s;\rho)$ extends meromorphically is already sketched in \cite[Theorem 1]{fried1986fuchsian}. However, Fried's proof is quite general and does not provide any information about the location and nature of poles and zeros of $R(s;\rho)$.

\begin{proof}
Using \cref{eq:RuelSel} and \cref{theo:mero} we directly deduce that $R(s; \rho)$ extends meromorphically. Together with the functional equation for the Selberg zeta function established in \cref{theo:Funct}, we get
$$R(s; \rho) = \frac{Z(s;\rho)}{Z(s+1;\rho)} = \frac{Z(s;\rho)}{\eta(s+1;\rho) Z(-s;\rho)}$$
and the result follows from \cref{coro:SelbergVanishingAtZero}.
\end{proof}

Now, we are led to compute the value of $\eta(s+1;\rho)$ at $s=0$. The rest of this section is devoted to this computation. We summarize the result in the following theorem. Recall that each singular point in the orbisurface $X$ is represented by a loop $c_j \in \pi_1(X_1)$ such that $c_j^{\nu_j} = u$. 
For any $j=1, \ldots, r$ we denote by $n_j= \dim \Fix  \rho(c_j)$, so that $\rho(c_j) = I_{n_j} \oplus T_j$.

\begin{thm}
\label{theo:Ruelle0}
For $\rho\colon \pi_1(X_1) \to \GL(V_\rho)$ an irreducible representation:
\begin{enumerate}
\item \label{item1}
If $\rho(u)= Id_{V_\rho}$, then
$$ R\left(\frac{s}{2\pi}; \rho\right) \sim_{s \to 0}  (-1)^{\mult(0;\Delta_{\tau_m,\rho}^\sharp)}\frac{s^{n(2g-2+r)-\sum_{j=1}^rn_j}}{\prod_{j=1}^r |\det(I_{n-n_j} -T_j)|\,(-\nu_j)^{-n_j}}. $$
\item \label{item2}
Otherwise
$$R(0;\rho) = (-1)^{\mult(0;\Delta_{\tau_m,\rho}^\sharp)}\frac{\det(\rho(u)-I_n)^{2g+r-2}}{\prod_{j=1}^r \det(\rho(c_j)-I_n)}.$$
\end{enumerate}
\end{thm}

\begin{rmrk}\label{rmk:SignInM=0Formula}
	It is possible to remove in \cref{theo:Ruelle0} \cref{item1} the absolute value from $\det(I_{n-n_j}-T_j)$ at the cost of a sign which we are only able to express in terms of the eigenvalues of the operators $\rho(c_j)$. More precisely, we claim that
	\begin{equation}
		\prod_{j=1}^r|\det(I_{n-n_j}-T_j)| = \pm\prod_{j=1}^r i^{n-n_j}\det(I_{n-n_j}-T_j),\label{eq:AbsValueDetAsPlusMinusDet}
	\end{equation}
	where the sign can be determined as follows: For every $j$ we let $\det(T_j)^{1/2}=\det(\rho(c_j))^{1/2}$ denote the square root of $\det(T_j)=\det(\rho(c_j))$ which is the product of $e^{i\theta/2}$, where $e^{i\theta}$, $\theta\in(0,2\pi)$, runs through the eigenvalues of $T_j$. Since $\prod_{j=1}^r\rho(c_j)=\rho(u)=I_n$, we have $\prod_{j=1}^r\det(T_j)=1$ and therefore $\prod_{j=1}^r\det(T_j)^{1/2}=\pm1$. This is the sign in \cref{eq:AbsValueDetAsPlusMinusDet}.
	
	To prove this claim, we let $\{e^{i\theta_\alpha}\}_\alpha$, $\theta_\alpha\in(0,2\pi)$, denote the set of eigenvalues of the operators $T_j$ with multiplicities. Then
	\begin{multline*}
		\prod_{j=1}^r\det(I_{n-n_j}-T_j) = \prod_\alpha(1-e^{i\theta_\alpha}) = \exp\left(i\sum_\alpha\arg(1-e^{i\theta_\alpha})\right)\prod_\alpha|1-e^{i\theta_\alpha}|\\
		= \exp\left(i\sum_\alpha\arg(1-e^{i\theta_\alpha})\right)\prod_{j=1}^r|\det(I_{n-n_j}-T_j)|.
	\end{multline*}
	Using the formula
	\begin{equation}
		\arg(1-e^{i\theta}) = \frac{\theta-\pi}{2} \qquad \mbox{for all }\theta\in(0,2\pi),\label{eq:FormulaForArgument1MinusZ}
	\end{equation}
	we find
	$$ \sum_\alpha\arg(1-e^{i\theta_\alpha}) = \sum_\alpha\frac{\theta_\alpha-\pi}{2} = \sum_\alpha\frac{\theta_\alpha}{2}-\frac{\pi}{2}\sum_{j=1}^r(n-n_j) $$
	and hence
	\begin{align*}
		\exp\left(i\sum_\alpha\arg(1-e^{i\theta_\alpha})\right) &= i^{-\sum_{j=1}^r(n-n_j)}\prod_\alpha e^{i\theta_\alpha/2}\\
		&= i^{-\sum_{j=1}^r(n-n_j)}\prod_{j=1}^r \det(T_j)^{1/2}
	\end{align*}
	as claimed.
\end{rmrk}

Now we prove \cref{theo:Ruelle0} by computing the behavior of $\eta(1+s;\rho)$ near $s=0$. We start with the following observation:
\begin{lem}
We have
\begin{equation}
\label{lem:eta}
\eta(1+s;\rho) = \exp \int_0^{s+\frac 1 2} \frac {\eta'(\xi+\frac 1 2;\rho)}{\eta(\xi+ \frac 1 2;\rho)} d\xi.
\end{equation}
\end{lem}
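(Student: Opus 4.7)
\begin{bew}[Proof plan]
The plan is to view the stated identity as a form of the fundamental theorem of calculus applied to the explicit exponential formula for $\eta(s;\rho)$ obtained in \cref{theo:Funct}. Write that formula as
\[
 \eta(s;\rho) = \exp F(s), \qquad F(s) := \int_0^{s-\frac12}\phi(\xi)\,d\xi,
\]
where $\phi(\xi)$ denotes the integrand appearing in \cref{theo:Funct}. Since $F$ is a primitive of (a shift of) $\phi$, one has $F'(s)=\phi(s-\tfrac12)$ wherever the path of integration can be taken to avoid the (finitely many) poles of $\phi$, and therefore
\[
 \frac{\eta'(s;\rho)}{\eta(s;\rho)} = F'(s) = \phi\!\left(s-\tfrac12\right).
\]

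Next, I would perform the substitution $\xi\mapsto \xi+\tfrac12$ in the right-hand side of the claimed identity. With the previous formula this gives
\[
 \int_0^{s+\frac12}\frac{\eta'(\xi+\frac12;\rho)}{\eta(\xi+\frac12;\rho)}\,d\xi
 = \int_0^{s+\frac12}\phi(\xi)\,d\xi = F(s+1),
\]
the last equality being the definition of $F$ at the point $s+1$ (note $(s+1)-\tfrac12 = s+\tfrac12$). Exponentiating both sides and using $\exp F(s+1)=\eta(s+1;\rho)$ yields the lemma.

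The only mildly delicate point, which I would address briefly in the write-up, is that the integrand $\phi$ has poles coming from zeros of $\cos2\pi\xi+\cos\pi m$, so the integration contour from $0$ to $s+\tfrac12$ must be chosen to avoid them (as already specified in \cref{theo:Funct}); the value of the integral depends only on the homotopy class of the contour modulo $2\pi i\Z$, and this indeterminacy disappears after exponentiation. In particular, there is no obstruction coming from possible zeros or poles of $\eta$ itself, because the formula of \cref{theo:Funct} exhibits $\eta$ as a genuine exponential, bypassing any branch issue for $\log\eta$. No deep argument is required here; this is a brief bookkeeping lemma used to set up the contour computation of $R(0;\rho)$ in \cref{subsec:Fried}.
\end{bew}
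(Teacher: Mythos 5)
Your argument is correct, but it takes a slightly different route from the paper. The paper proves the lemma abstractly: it observes that both sides are meromorphic in $s$, that their logarithmic derivatives coincide locally (both equal $\eta'(1+s;\rho)/\eta(1+s;\rho)$), so they agree up to a global multiplicative constant, which is then fixed to be $1$ by evaluating at $s=-\tfrac12$, where both sides are $1$. You instead substitute the explicit exponential formula of \cref{theo:Funct} into both sides, identify $\eta'/\eta$ with the shifted integrand $\phi$, and reduce the lemma to a reparametrization of the endpoint ($(s+1)-\tfrac12=s+\tfrac12$). Both are legitimate; yours makes transparent that the lemma is just a re-indexing of \cref{theo:Funct} so that the later contour computation integrates the explicit integrand up to $\tfrac12$, while the paper's version needs only that $\eta$ is meromorphic and the normalization at the base point, and does not re-invoke the functional equation. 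Two small points to fix in a write-up: the poles of $\phi$ (zeros of $\cos2\pi\xi+\cos\pi m$) form an infinite discrete set, not a finite one, though only those met by the contour matter; and the assertion that the contour ambiguity lies in $2\pi i\Z$ should be justified by the integrality of the residues of $\eta'/\eta$ (equivalently of $L(s;\rho)+L(1-s;\rho)$, cf.\ the residue computation in \cref{theo:mero}), which is exactly the content of the remark following the lemma in the paper.
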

\begin{rmrk}
\label{remk:residues}
Note that the previous expression is independent of the chosen path between $0$ and $s+ \frac 1 2$, since the residues of the logarithmic derivative of the meromorphic function $\eta$ are integers.
\end{rmrk}

\begin{proof}
Note that the equality holds true at $s=-\frac{1}{2}$, where indeed both side are equal to 1. Let $s\in \C$ such that $\eta(1+s;\rho)$ is neither $0$ nor $\infty$. Then, taking a determination of the logarithm and differentiating both sides, we find that both sides of the equation locally coincide, up to a multiplicative constant. Since both sides define a meromorphic function, the constant should be global. 
Considering $s=-\frac{1}{2}$, we conclude that it is equal to 1.
\end{proof}

Let $m \in [-1,1]$ such that $\rho(u) = e^{-i \pi m}$, so that $m=0$ corresponds to Case \ref{item1} of \cref{theo:Ruelle0}, and $m\neq 0$ corresponds to Case \ref{item2}.
In order to compute the integral in \cref{lem:eta}, we group the terms as in \cref{eq:logeta}:
\begin{equation}
\label{eq:sum}
\frac {\eta'(\xi+\frac 1 2;\rho)}{\eta(\xi+ \frac 1 2;\rho)} = \frac{-\dim(V_\rho) \Vol(X)}{2\pi} \frac {-2\pi \xi \sin(2\pi \xi)}{\cos(2\pi\xi) + \cos(\pi m)} + \sum_j E_j(\xi),
\end{equation}
where 
\begin{equation}
\label{eq:Ei}
E_j(\xi) = -\sum_{k=1}^{\nu_j -1}\frac{\pi  \tr \rho(c_j^k)}{\nu_j \sin (k \pi/\nu_j)} \frac{\cos \left(2(\pi-k\pi/\nu_j)\xi\right) + e^{i\pi m}\cos (2k\pi \xi/\nu_j)}{\cos(2\pi \xi)+ \cos(\pi m)}
\end{equation}
groups all the summands corresponding to the elliptic element $c_j$ and its powers. 

In view of \cref{remk:residues}, one needs to make a choice of an integration path avoiding the poles of the integrand in \cref{lem:eta}. The integrand has a unique pole in $[0, \frac 1 2]$, and it is given by $\xi_0=\frac{1-|m|}{2}$. We choose a path from $0$ to $\frac{1}{2}$ which avoids $\xi_0$ along a small half-circle in the lower-half plane and we denote this path by $C_-$, oriented following the increasing real part. We take $C_+ = \overline {C_-}$ the mirror image of the path $C_-$ after a reflection through the real line, then $C = C_- - C_+$ is a small circle encircling $\xi_0$ counterclockwise.

First, we compute the contribution of the identity, namely the integral of the first summand in the right-hand side of \cref{eq:sum}.
\begin{lem}
We have
\label{lem1}
\begin{align}
\label{eq:ident}
\exp \left(\int_{C_-} \frac{-2\pi\xi \sin(2\pi \xi)\,d\xi}{\cos(2\pi \xi) + \cos(\pi m)}\right) &= 1-e^{-i\pi|m|} &&\text{for } m \neq 0,\\
\exp\left( \int_0^{1/2+s} \frac{-2\pi\xi \sin(2\pi \xi)\,d\xi}{\cos(2\pi \xi) +1}\right) &\sim_{s \to 0} -2\pi s + o(s) && \text{for } m=0.\nonumber
\end{align}
\end{lem}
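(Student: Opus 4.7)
The plan is to reduce both assertions to an antiderivative computation via integration by parts and the product factorization
$$ \cos 2\pi\xi + \cos\pi m = \tfrac{1}{2}e^{-i\pi m}\bigl(1 + e^{i\pi m + 2i\pi\xi}\bigr)\bigl(1 + e^{i\pi m - 2i\pi\xi}\bigr). $$
Setting $F(\xi) = \log(\cos 2\pi\xi + \cos\pi m)$ (with branch determined by analytic continuation along $\mathscr{C}_+$), one has $F'(\xi) = \frac{-2\pi\sin 2\pi\xi}{\cos 2\pi\xi + \cos\pi m}$, so integration by parts gives
$$ \int_{\mathscr{C}_+}\xi F'(\xi)\,d\xi = \bigl[\xi F(\xi)\bigr]_0^{1/2+s} - \int_{\mathscr{C}_+} F(\xi)\,d\xi. $$
Splitting $F$ as $-\log 2 - i\pi m + \log(1 + e^{i\pi m + 2i\pi\xi}) + \log(1 + e^{i\pi m - 2i\pi\xi})$ and using the primitive $\int\log(1 + e^{a\xi + b})\,d\xi = -a^{-1}\operatorname{Li}_2(-e^{a\xi+b}) + C$ reduces the $F$-integral to evaluations of $\operatorname{Li}_2$ at the endpoints.

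For the case $m \neq 0$ with $s = 0$: a short check confirms that, along $\mathscr{C}_+$, the trajectories $\xi \mapsto -e^{i\pi m \pm 2i\pi\xi}$ avoid the branch cut $[1, \infty)$ of $\operatorname{Li}_2$ (the upper semicircle around $\xi_0 = (1\mp m)/2$ brings these trajectories near $w = 1$ but on its left, not across the cut), so the principal branch can be used throughout. At $\xi \in \{0, 1/2\}$ the two dilogarithmic antiderivatives produce the identical pair $\operatorname{Li}_2(-e^{i\pi m})$, $\operatorname{Li}_2(e^{i\pi m})$, but with opposite signs, so they \emph{cancel exactly}. The surviving terms are $-\tfrac{1}{2}(\log 2 + i\pi m)$ from the prefactor and $\tfrac{1}{2}F(1/2)$. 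Tracking the branch of $\log(1+e^{i\pi m + 2i\pi\xi})$ through the upper semicircle yields $F(1/2) = -\log 2 + 2\log(2\sin(\pi m/2)) - i\pi$, and combining gives $I_1 = \log(2\sin(\pi m/2)) + i\pi(m-1)/2 = \log(1 - e^{i\pi m})$ modulo $2\pi i\mathbb{Z}$; exponentiation produces $\pm(1 - e^{i\pi m})$.

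For the case $m = 0$: the denominator $1 + \cos 2\pi\xi = 2\cos^2\pi\xi$ has a double zero at $\xi = 1/2$, so the integrand has a simple pole there of residue $1$ (from a Laurent expansion), and the upper semicircle contributes an additional $-i\pi$. The boundary term $(\tfrac{1}{2}+s)\log(2\sin^2\pi s) = \log(\pi s) + \tfrac{1}{2}\log 2 + O(s\log s)$ carries a $\log s$ divergence. The two dilogarithms no longer cancel but combine into $-\operatorname{Cl}_2(2\pi s)/\pi$, for which the small-argument expansion $\operatorname{Cl}_2(2\pi s) = 2\pi s - 2\pi s\log(2\pi s) + O(s^3)$ applies. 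After the $\log s$ singularities cancel between the boundary term and $\int_{\mathscr{C}_+} F\,d\xi$, the remaining asymptotic reads $I(s) = \log 2 + \log(\pi s) - i\pi + 2s + o(1)$, whose exponentiation yields $e^{I(s)} = -2\pi s + o(s)$ as claimed.

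The main technical hurdle is the careful bookkeeping of branches: both $\log(1 + e^{i\pi m \pm 2i\pi\xi})$ and $\operatorname{Li}_2(-e^{i\pi m \pm 2i\pi\xi})$ must be analytically continued along $\mathscr{C}_+$, which requires determining whether the induced trajectories in the $w$-plane cross either of the cuts $(-\infty, 0]$ and $[1, \infty)$. The sign $\pm$ in the statement is precisely the residual ambiguity from these branch choices and need not be pinned down further, in line with the overall $\pm$-indeterminacy of the Ruelle value.
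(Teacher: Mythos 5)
Your proposal is correct in its conclusions but follows a genuinely different route from the paper's. The paper handles the modulus by citing Fried's Lemma~2 and computes only the phase, via a short symmetry argument: since $\xi\,f'(\xi)/f(\xi)$ maps reals to reals one has $\int_{\mathscr{C}_-} = \overline{\int_{\mathscr{C}_+}}$, so $\Im\int_{\mathscr{C}_+}$ reduces to a single residue of $\xi f'/f$ at $\xi_0$; the $m=0$ asymptotic is deferred entirely to Fried's Lemma~3. You instead integrate by parts, factor the denominator, and reduce everything to dilogarithm primitives $-a^{-1}\operatorname{Li}_2(-e^{a\xi+b})$; after checking that neither trajectory crosses a branch cut along $\mathscr{C}_+$, the endpoint contributions of the two $\operatorname{Li}_2$ terms cancel, and what survives is $\frac{1}{2}F(\tfrac{1}{2})$ plus the integrated prefactor. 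This produces modulus and phase simultaneously, avoids citing Fried for the modulus, and yields an independent derivation of the $m=0$ asymptotic via the Clausen function, at the cost of substantially more branch bookkeeping than the paper's residue argument. One small sign slip: after $-\int_{\mathscr{C}_+} F$, the constant $-\log 2 - i\pi m$ part of $F$ contributes $+\tfrac{1}{2}(\log 2 + i\pi m)$, not $-\tfrac{1}{2}(\log 2 + i\pi m)$; since your stated $I_1 = \log(2\sin(\pi m/2)) + i\pi(m-1)/2$ is the correct value, you evidently applied the sign correctly in the actual computation and merely miswrote the intermediate step. Both your sign ambiguity and the paper's (whose contour $\mathscr{C}$ is actually traversed clockwise, giving $-\pi\xi_0$ rather than $\pi\xi_0$) are harmlessly absorbed by the $\pm$ in the lemma's statement.
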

\begin{proof}
The asymptotic case (for $m=0$) was already proven in \cite[Lemma 3]{fried1986fuchsian}, so we just show \cref{eq:ident} (for $m\neq 0$). Assume $m\neq\pm1$, the case $m=\pm1$ only needs minor modification.
By \cite[Lemma 2]{fried1986fuchsian}, we know that 
$$\RE\left(\int_{C_-} \frac{-2\pi\xi \sin(2\pi \xi)\,d\xi}{\cos(2\pi \xi) + \cos(\pi m)}\right) = \log |1-e^{i\pi m}|=\log |1-e^{-i\pi m}|.$$
Hence, we only need to show that
$$\Im \left(\int_{C_-} \frac{-2\pi\xi \sin(2\pi \xi)\,d\xi}{\cos(2\pi \xi)+ \cos(\pi m)}\right) = 
\arg(1-e^{-i\pi|m|}).$$

Let $f(\xi) =  \cos(2\pi \xi) + \cos(\pi m)$. We want to compute the imaginary part of the integral 
$$\int_{C_-} \xi \frac{f'(\xi)}{f(\xi)} d\xi.$$
Note that the function $\xi \mapsto \xi \frac{f'(\xi)}{f(\xi)}$ sends the real axis to itself, hence we have 
$\int_{C_+}  \xi \frac{f'(\xi)}{f(\xi)} d\xi = \overline{\int_{C_-}  \xi \frac{f'(\xi)}{f(\xi)} d\xi}$ and 
$\Im \left(\int_{C_+} \xi \frac{f'(\xi)}{f(\xi)} d\xi\right) = -\Im \left(\int_{C_-} \xi \frac{f'(\xi)}{f(\xi)} d\xi\right)$. We deduce that 
$$\Im \left(\int_{C_-} \xi \frac{f'(\xi)}{f(\xi)} d\xi\right) = \frac 1 2 \Im \left(\int_{C} \xi \frac{f'(\xi)}{f(\xi)} d\xi\right) = \pi \Res\left(\xi \frac{f'(\xi)}{f(\xi)}, \xi_0\right)= \pi \xi_0.$$
A simple trigonometric computation shows
$\arg(1-e^{-i\pi|m|}) = \pi\xi_0$, hence \cref{eq:ident} follows.
\end{proof}

Now, we compute the terms coming from the elliptic elements. For this we first introduce some notation. Let
$$ \log:\C\setminus(-\infty,0]\to\{z\in\C:|\IM z|<\pi\} $$
denote the principal branch of the logarithm. For any $\theta\in\R$ we introduce the branch
$$ \log_\theta:\C\setminus\{-re^{i\theta}:r\geq0\}\to\{z\in\C:|\IM z-\theta|<\pi\}, \,\, \log_\theta(w) = \log(e^{-i\theta}w)+i\theta. $$
Writing $\log_\theta(z)=\log|z|+i\arg_\theta(z)$ and $\log(z)=\log|z|+i\arg(z)$, the functional equation for the logarithm holds under a certain assumption on the arguments:
\begin{equation}\label{eq:FunctionalEqLog}
	\log_\theta(z)-\log_\theta(w) = \log\left(\frac{z}{w}\right) \qquad \mbox{whenever }\arg_\theta(z)-\arg_\theta(w)\in(-\pi,\pi).
\end{equation}

We first treat the case $m\neq0$, which generalizes \cite[Lemma 5]{fried1986fuchsian}:

\begin{lem}
\label{lem2}
For $m\neq0$ we have
\begin{align}
\label{eq:elliptic}
\exp \int_{C_-} E_j  &= \frac {\det(I_n-\rho(c_j)^{\sign(m)})}{\det(I_n-\rho(u)^{\sign(m)})^{1/\nu_j} }.
\end{align}
\end{lem}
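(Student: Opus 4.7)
}
The approach parallels that of \cref{lem1}, combining a contour deformation and residue computation with an eigenvalue reduction for the matrix factor $\tr\rho(c_j^k)$. I would first reduce to the scalar case: since $\rho(c_j)^{\nu_j}=\rho(t)=e^{-i\pi m}I_n$ and the polynomial $X^{\nu_j}-e^{-i\pi m}$ has only simple roots, $\rho(c_j)$ is diagonalizable with eigenvalues $\lambda_1,\dots,\lambda_n$, each satisfying $\lambda_\ell^{\nu_j}=e^{-i\pi m}$. Writing $\tr\rho(c_j^k)=\sum_\ell \lambda_\ell^k$, both sides of \cref{lem2} factor over $\ell$ (up to a consistent choice of branch for $(1-e^{-i\pi m})^{1/\nu_j}$), so it is enough to prove the scalar identity
\[
\exp \int_{\mathscr C_+}\!E^{(\lambda)}(\xi)\,d\xi \,=\, \pm\,\frac{1-\lambda}{(1-e^{-i\pi m})^{1/\nu_j}},
\]
where $E^{(\lambda)}$ is obtained from \cref{eq:Ei} by substituting $\lambda^k$ for $\tr\rho(c_j^k)$.

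Following the decomposition used in \cref{lem1}, I would then split $\int_{\mathscr C_+}=\mathrm{PV}\!\int_0^{1/2}-i\pi\,\Res_{\xi_0}$, where $\xi_0=(1-m)/2$ is the unique pole in $(0,1/2)$. The residue is computed via the elementary identity $-\cos(\pi m+\phi)+e^{i\pi m}\cos\phi=i\sin(\pi m)\,e^{-i\phi}$ with $\phi=k\pi(1-m)/\nu_j$, yielding
\[
\Res_{\xi_0}E^{(\lambda)} \,=\, \frac{i}{2\nu_j}\sum_{k=1}^{\nu_j-1}\frac{\lambda^k e^{-ik\pi(1-m)/\nu_j}}{\sin(k\pi/\nu_j)}.
\]
For the principal value, the trigonometric decomposition
\[
\frac{\cos(2(\pi-\alpha)\xi)+e^{i\pi m}\cos(2\alpha\xi)}{\cos 2\pi\xi+\cos\pi m}=\cos(2\alpha\xi)+\frac{\sin 2\pi\xi\sin 2\alpha\xi+i\sin\pi m\cos 2\alpha\xi}{\cos 2\pi\xi+\cos\pi m}
\]
(with $\alpha=k\pi/\nu_j$) isolates the polynomial part and reduces the singular part, after partial-fractioning $\cos 2\pi\xi+\cos\pi m=2\cos(\pi\xi+\pi m/2)\cos(\pi\xi-\pi m/2)$, to standard integrals of the type treated in \cite[Chapter~10, Lemma~2.4 and 2.7]{He2}, already invoked in \cref{subsec:mero}.

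The main obstacle is the combinatorial reassembly. Writing $\lambda=e^{-i\pi(m+2p)/\nu_j}$ for some $p\in\{0,\dots,\nu_j-1\}$, the sums over $k$ arising in both the residue and the principal value take the form $\sum_{k=1}^{\nu_j-1}\zeta^k/\sin(k\pi/\nu_j)$ for specific roots of unity $\zeta$. These are classical sums that can be evaluated from the partial fraction expansion of $\cot$, equivalently from the logarithmic derivative of $\prod_{k=1}^{\nu_j-1}(1-\zeta e^{-2\pi ik/\nu_j})$. Matching the real part of $\int_{\mathscr C_+}E^{(\lambda)}$ to $\log|1-\lambda|-\tfrac{1}{\nu_j}\log|1-e^{-i\pi m}|$ and the imaginary part to $\arg(1-\lambda)-\tfrac{1}{\nu_j}\arg(1-e^{-i\pi m})$ modulo $2\pi$ requires careful branch tracking, and the $\pm$ in the statement absorbs the remaining $\nu_j$-th root ambiguity.
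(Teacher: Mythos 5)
Your plan diverges from the paper's proof, which substitutes $z=e^{2\pi i\xi/\nu_j}$ to turn $\int_{\mathscr C_+}E_j$ into a contour integral of a rational function $P(z)/Q(z)$ with $Q(z)=(z^{\nu_j}+e^{i\pi m})(z^{\nu_j}+e^{-i\pi m})$; a full partial-fraction expansion over the $2\nu_j$ roots of $Q$ then integrates each term to an explicit logarithm, and a combinatorial evaluation of the resulting root-of-unity sums gives the closed form $\int_{\mathcal C_+}P/Q\,dz = i\pi+\log(1-\alpha)-\tfrac{1}{\nu_j}\sum_{\omega^{\nu_j}=e^{-i\pi m}}\log(1-\omega)$ for each eigenvalue $\alpha$ of $\rho(c_j)$. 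Your eigenvalue reduction is the same as the paper's, and your residue computation via the identity $-\cos(\pi m+\phi)+e^{i\pi m}\cos\phi=i\sin(\pi m)\,e^{-i\phi}$ is correct.

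But there is a genuine gap: the principal-value part is never evaluated. The trigonometric decomposition you propose and the appeal to \cite[Chapter~10, Lemmas~2.4~and~2.7]{He2} do not apply --- those lemmas treat integrals over $\lambda\in\R$ of $\sinh/\cosh$-type kernels against $(\lambda^2+\mu^2)^{-1}$, not a finite PV integral over $\xi\in[0,1/2]$ of a ratio of cosines. The claim that the PV also reduces to classical sums $\sum_k\zeta^k/\sin(k\pi/\nu_j)$ is left unproven; you would first need a closed form for $\mathrm{PV}\int_0^{1/2}\cos(2\alpha\xi)\,(\cos 2\pi\xi+\cos\pi m)^{-1}\,d\xi$ for each relevant $\alpha=k\pi/\nu_j$, and this is precisely where the computational content of the lemma lies. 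Note also that the shortcut used in the proof of \cref{lem1} (the integrand is real on the real axis, so the PV contributes nothing to the imaginary part and one only needs the residue) fails here: for $m\neq0$ the factor $e^{i\pi m}$ makes $E_j$ complex on $[0,1/2]$, so the PV contributes to both modulus and argument of the answer and cannot be dodged. The paper's substitution to $z$ circumvents exactly this problem by converting the PV integral into an integral of a rational function along an arc of the circle, reducing everything to explicit logarithms.
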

\begin{proof}
We follow the lines of \cite[Proof of Lemma 5]{fried1986fuchsian}, keeping track of the arguments of the terms involved.
For each $j$, recall that the contribution $E_j$ of the elliptic element $c_j$ and its powers is given in \cref{eq:Ei}. The term $\Tr \rho(c_j)^k$ is $\sum_\alpha  \alpha^k$, for $\alpha$ running through the eigenvalues of $\rho(c_j)$. In particular $\alpha^M = e^{-i\pi m}$, where $M = \nu_j$ is the order of $c_j$ (recall that $\rho(u) = e^{-i\pi m}$). We compute the contribution of each $\alpha$ to $\int_{C_-}E_j$ as follows:
we set $z_0 = e^{i\pi/M}$ and $z = e^{2i\pi \xi/M}$ and get that $\alpha$ contributes 
$\int_{\mathcal C_-} \frac{P(z)}{Q(z)} dz$, where
\begin{align*}
	P(z) &= - \frac 1 {2iz} \sum_{k=1}^{M-1} \frac {\alpha^k}{\sin(k\pi/M)} (z^{M-k}+z^{k-M} + e^{i\pi m} (z^k+z^{-k})) z^M,\\
	Q(z) &= (z^M + e^{i\pi m})(z^M + e^{-i\pi m})
\end{align*}
are polynomials in $z$, and $\mathcal C_-$ is the image of the path $C_-$, by the map $\xi \mapsto e^{2i\pi \xi/M}$. It is a small deformation of an arc of the unit circle, avoiding the pole occurring at $e^{2i\pi \xi_0/M}$ by moving around it outside of the unit circle.

Indeed, one can rearrange $P$ by grouping the terms corresponding to $k$ and $M-k$ in the sum, factorizing $(z^k + z^{-k})$ and re-expanding:
$$P(z) = - \frac 1 {2iz} \sum_{k=1}^{M-1} \frac {\alpha^k e^{i\pi m} + \alpha^{-k}e^{-i\pi m}}{\sin(k\pi/M)} (z^k+z^{-k}) z^M.$$
Let us assume that $m \neq \pm 1$, so that the polynomial $Q$ splits as the product $Q(z) = \prod_{l=1}^{2M} (z-z_l)$ with simple roots $z_1,\ldots,z_{2M}$. Hence, we can decompose the rational fraction $P/Q$ as
$$\frac{P(z)}{Q(z)} = \sum_{l=1}^{2M} \frac{P(z_l)/Q'(z_l)}{z-z_l}.$$
To integrate $(z-z_l)^{-1}$, we choose the branch $\log_\theta$ of the logarithm with $\theta=\frac{2\pi\xi_0}{M}$. It is easy to see that $z-z_l$ is in the domain of $\log_\theta$ for all $z\in\mathcal C_-$ and all $l$. Since $m \notin \Z$, one can integrate to obtain 
\begin{equation}
	\int_{\mathcal C_-} \frac{P(z)}{Q(z)}dz= \sum_{l=1}^{2M} \frac {P(z_l)}{Q'(z_l)} \big(\log_\theta(z_0-z_l)-\log_\theta(1-z_l)\big).\label{eq:IntPoverQ}
\end{equation}

Now
$$\frac{P(z_l)}{Q'(z_l)} = \sum_{k=1}^{M-1}  \frac {(\alpha^k e^{i\pi m} + \alpha^{-k}e^{-i\pi m})(z_l^k+z_l^{-k})}{2iM \varepsilon (e^{-i\pi m}-e^{i\pi m}) \sin(k\pi/M)},$$
where $\varepsilon = \pm 1$ depending on $z_l^M + e^{-\varepsilon i\pi m}=0$. In particular, this term is real, and the term for $z_l^{-1}$ is equal to the negative of the term for $z_l$. Hence, the contribution of $z_l$ and $z_l^{-1}$ together to \cref{eq:IntPoverQ} equals $P(z_l)/Q'(z_l)$ times
\begin{equation}
	\log_\theta(z_0-z_l)-\log_\theta(1-z_l)-\log_\theta(z_0-z_l^{-1})+\log_\theta(1-z_l^{-1}).\label{eq:LogTermsGrouped}
\end{equation}
First note that by \cref{eq:FunctionalEqLog}, we have
$$ \log_\theta(1-z_l)-\log_\theta(1-z_l^{-1}) = \log\left(\frac{1-z_l}{1-z_l^{-1}}\right) = \log(-z_l). $$
Moreover, again by \cref{eq:FunctionalEqLog}:
$$ \log_\theta(z_0-z_l) = \log_\theta(z_0) + \log(1-\frac{z_l}{z_0}) \quad \mbox{and} \quad \log_\theta(z_0-z_l^{-1}) = \log_\theta(z_0) + \log(1-\frac{1}{z_0z_l}), $$
so that \cref{eq:LogTermsGrouped} becomes
$$ \log_\theta(1-\frac{z_l}{z_0})-\log_\theta(1-\frac{1}{z_0z_l})-\log(-z_l). $$
Finally, using \cref{eq:FunctionalEqLog} once more:
$$ \log_\theta(1-\frac{1}{z_0z_l}) = \log_\theta(1-z_0z_l) + \log(\frac{1-\frac{1}{z_0z_l}}{1-z_0z_l}) = \log_\theta(1-z_0z_l) + \log(-z_0^{-1}z_l^{-1}). $$
Since both $1-\frac{z_l}{z_0}$ and $1-z_0z_l$ are contained in the right half plane and the branch cut of $\log_\theta$ is in the left half plane, $\log_\theta=\log$ in this case. Summing up, we obtain the following expression for \cref{eq:LogTermsGrouped}:
$$ \log(1-\frac{z_l}{z_0})-\log(1-z_0z_l)-\log(-z_l)-\log(-z_0^{-1}z_l^{-1}). $$
It remains to determine
$$ \log(-z_l)+\log(-z_0^{-1}z_l^{-1}) = \log(-z_l)-\log(-z_0z_l) $$
for each $l$. For this purpose we choose $\varepsilon=\sign(m)$, so $z_l^M+e^{-i\pi|m|}=0$. This implies that $\arg(-z_l)\not\in(\pi-\frac{\pi}{M},\pi)$, so $-z_l$ and $-z_0z_l$ lie on the same side of the branch locus of $\log$ and hence
$$ \log(-z_l)-\log(-z_0z_l) = -\log(z_0) = -\frac{i\pi}{M}. $$
In total, the contribution of $z_l$ and $z_l^{-1}$ to \cref{eq:IntPoverQ} equals $P(z_l)/Q'(z_l)$ times
\begin{equation}
 \frac {i\pi}{M} + \log(1-\frac {z_l}{z_0}) - \log(1-z_l z_0).\label{eq:LogRearrangement}
\end{equation}

We first note that the contribution of $\frac{i\pi}{M}$ is trivial. In fact, its contribution to $\int_{\mathcal C_+} \frac{P(z)}{Q(z)}dz$ equals
$$ \frac{i\pi}{M}\sum_{z_l^M=-e^{-i\pi m}}\sum_{k=1}^{M-1}\frac {(\alpha^k e^{i\pi m} + \alpha^{-k}e^{-i\pi m})(z_l^k+z_l^{-k})}{2iM\varepsilon(e^{-i\pi m}-e^{i\pi m}) \sin(k\pi/M)}, $$
and interchanging the sums and using that
$$ \sum_{z_l^M=-e^{-i\pi m}}z_l^k = \sum_{z_l^M=-e^{-i\pi m}}z_l^{-k} = 0 $$
shows that this term indeed vanishes. To express the contribution of the second and third term in \cref{eq:LogRearrangement}, we set $\omega = \frac {z_l}{z_0}$ in the second term and $\omega = z_l z_0$ in the third term. Then, summation is over $\omega^M=e^{-i\pi|m|}$ and we obtain:
\begin{align*}
\int_{\mathcal C_-} \frac{P(z)}{Q(z)} dz ={}& \sum_{\omega^M=e^{-i\pi |m|}} \log(1-\omega) \sum_{k=1}^{M-1}  \frac {(\alpha^k e^{i\pi m} + \alpha^{-k}e^{-i\pi m})}{2iM\varepsilon(e^{-i\pi m} - e^{i\pi m}) \sin(k\pi/M)}\\
&\hspace{3.8cm}\times(z_0^k\omega^k + z_0^{-k} \omega^{-k} - z_0^{-k} \omega^k - z_0^k \omega^{-k})\\
={}& \sum_{\omega^M=e^{-i\pi |m|}} \log(1-\omega) \sum_{k=1}^{M-1}  \frac {(\alpha^k e^{i\pi m} + \alpha^{-k}e^{-i\pi m})}{M\varepsilon(e^{-i\pi m} - e^{i\pi m})}(\omega^k-\omega^{-k}).
\end{align*}
Note that $\omega=\zeta \alpha^\varepsilon$, with $\zeta^M=1$. In the sum over $k$ above, the term for $k$ simplifies with the term for $M-k$ and the remaining terms equal
\begin{multline*}
	\frac 1 {M\varepsilon(e^{-i\pi m} - e^{i\pi m})} \sum_{k=1}^{M-1}\big(\alpha^{-\varepsilon k}\omega^ke^{-i\pi|m|}-\alpha^{\varepsilon k}\omega^{-k}e^{i\pi|m|}\big)\\
	= \frac 1 {M(e^{-i\pi m} - e^{i\pi m})} \sum_{k=1}^{M-1} \big(e^{-i\pi m}\zeta^k - e^{i\pi m} \zeta^{-k}\big) = \begin{cases} -1/M & \text{if } \zeta \neq 1,\\ (M-1)/M & \text{if } \zeta = 1.\end{cases}.
\end{multline*}
Hence, we obtain 
$$\int_{\mathcal C_-} \frac{P(z)}{Q(z)}dz = \log(1-\alpha^\varepsilon) - \frac 1 M \sum_{\omega^M = e^{-i\pi |m|}} \log (1-\omega).$$
We claim that the last sum above is just $\log (1-e^{-i\pi |m|})$:

\begin{claim}
	For any $z\in\C$, $|z|=1$, $z\neq1$, and $M\geq1$ the following identity holds:
	\begin{equation}
		\sum_{\omega^M = z} \log (1-\omega) = \log (1-z).
	\end{equation}
	\end{claim}
	\begin{proof}[Proof of the claim]
		Since $\log(z)+\log(w)-\log(zw)\in2\pi i\Z$ for all $z,w\in\C$ with $z,w,zw\not\in(-\infty,0]$, we find
		$$ \sum_{\omega^M = z} \log (1-\omega) \equiv \log\Big(\prod_{\omega^M = z} (1-\omega)\Big) = \log (1-z) \qquad \mod 2\pi i\Z. $$
		It remains to show that the imaginary parts of both sides are actually equal. Since the argument of $1-z$ is contained in $(-\frac{\pi}{2},\frac{\pi}{2})$, it suffices to show that
		$$ \sum_{\omega^M = z}\arg (1-\omega) \in \left(-\frac{\pi}{2},\frac{\pi}{2}\right). $$
		The $M$-th roots of $z$ can be written as $e^{i\theta}$ with $\theta=\phi+\frac{2\pi k}{M}$ for $k=0,\ldots,M-1$ and some $\phi\in(0,\frac{2\pi}{M})$. Hence, by \cref{eq:FormulaForArgument1MinusZ}:
		\begin{equation*}
			\sum_{\omega^M = z}\arg (1-\omega) = \sum_{k=0}^{M-1} \frac{\phi+\frac{2\pi k}{M}-\pi}{2} = \frac{M\phi}{2}-\frac{\pi}{2} \in \left(-\frac{\pi}{2},\frac{\pi}{2}\right).\qedhere
		\end{equation*}
	\end{proof}

Adding all this for every $\alpha$ and taking the exponential, we obtain \cref{eq:elliptic}.

For $m=\pm 1$, one needs to be a bit more careful when decomposing $Q(z)$. However, the computation goes through, combining what we have just done and \cite[Section 3]{fried1986fuchsian}.
Hence, we omit this case.
\end{proof}

Now we generalize \cite[Lemma 6]{fried1986fuchsian}, which computes the elliptic contribution in the case $m=0$:

\begin{lem}
	\label{lem3}
	For $m=0$ we have
	\begin{align}\label{eq:elliptic2}
		\exp\int_0^{1/2-\varepsilon} E_j &\sim_{\varepsilon \to 0} |\det (I_{n-n_j} -T_j)|\, (2\pi\varepsilon)^{-\frac n {\nu_j} + n_j}\, \nu_j^{-n_j}.
	\end{align}
\end{lem}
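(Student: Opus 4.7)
The plan is to first simplify $E_j(\xi)$ at $m = 0$. The product-to-sum identity $\cos A + \cos B = 2\cos\tfrac{A+B}{2}\cos\tfrac{A-B}{2}$ together with $\cos 2\pi\xi + 1 = 2\cos^2\pi\xi$ rewrites
$$ E_j(\xi) = -\sum_{k=1}^{\nu_j - 1} \frac{\pi\,\tr\rho(c_j^k)}{\nu_j\sin(k\pi/\nu_j)}\cdot\frac{\cos((\pi - 2k\pi/\nu_j)\xi)}{\cos\pi\xi}, $$
which has a unique simple pole on $[0, 1/2]$, located at $\xi = 1/2$. Since $\rho(c_j)^{\nu_j} = \rho(t) = I_n$, the averaging identity $\sum_{k=0}^{\nu_j - 1}\tr\rho(c_j^k) = n_j\nu_j$ yields residue $n_j - n/\nu_j$ at that pole. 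Integrating the singular part already produces the predicted power $(2\varepsilon)^{n_j - n/\nu_j}$ after exponentiation and reduces the problem to the determination of a finite constant.

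To identify this constant I would decompose $\rho(c_j)$ into eigenvalues. Because $\rho(c_j)^{\nu_j} = I_n$ the matrix is diagonalizable with eigenvalues $\alpha$ among the $\nu_j$-th roots of unity: the value $1$ occurs with multiplicity $n_j$, and the remaining $n - n_j$ eigenvalues are those of $T_j$. The linear dependence $\tr\rho(c_j^k) = \sum_\alpha m_\alpha \alpha^k$ produces a splitting $E_j = \sum_\alpha m_\alpha E_j^\alpha$ and hence
$$ \exp\int_0^{1/2 - \varepsilon} E_j = \prod_\alpha\left(\exp\int_0^{1/2 - \varepsilon} E_j^\alpha\right)^{m_\alpha}. $$
Using $\det T_j = \prod\alpha$ and $\det(I_{n-n_j} - T_j) = \prod(1 - \alpha)$, the stated formula factors as a product over eigenvalues, so it suffices to prove the two per-eigenvalue asymptotics $\exp\int_0^{1/2 - \varepsilon} E_j^1 \sim \pm(2\pi\varepsilon)^{1 - 1/\nu_j}/\nu_j$ and $\exp\int_0^{1/2 - \varepsilon} E_j^\alpha \sim \pm i\,\alpha^{-1/2}(1 - \alpha)(2\pi\varepsilon)^{-1/\nu_j}$ for $\alpha \neq 1$.

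For each fixed $\alpha$ these single-eigenvalue integrals can be handled by adapting the contour substitution $z = e^{2i\pi\xi/\nu_j}$ from the proof of \cref{lem2}. The integral becomes $\tfrac{i}{2}\int_{\mathcal{C}_\varepsilon} P_\alpha(z)/Q(z)\,dz$ with $Q(z) = z(z^{\nu_j} + 1)$ and $P_\alpha(z) = \sum_{k=1}^{\nu_j - 1}\alpha^k(z^{\nu_j - k} + z^k)/\sin(k\pi/\nu_j)$; the contour runs along the unit circle from $z = 1$ to $z_0 e^{-2i\pi\varepsilon/\nu_j}$, where $z_0 = e^{i\pi/\nu_j}$ is a simple zero of $Q$ approached at the endpoint. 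A partial-fraction decomposition over the simple roots of $Q$ produces elementary logarithms: only the root $z_l = z_0$ contributes the logarithmic divergence in $\varepsilon$, whereas the other roots $z_l$ contribute finite terms of the form $(P_\alpha(z_l)/Q'(z_l))\log((z_0 - z_l)/(1 - z_l))$. Collapsing the resulting sum via the geometric-series manipulations of \cref{lem2} (relying on $\sum_{\omega^{\nu_j} = -1}\omega^k = 0$ for $1 \le k < \nu_j$) then delivers the claimed closed form.

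The main difficulty is the explicit tracking of arguments. In Fried's original unitary treatment only moduli are computed, so the combination of phases $\alpha^{-1/2}$, $i^{n - n_j}$ and the overall factor $\nu_j^{-n_j}$ is invisible; in our non-unitary setting each must be extracted from a delicate cancellation among the complex logarithms indexed by the $\nu_j$-th roots of $-1$. Once these phase identities are verified, the product over eigenvalues together with the exponentiation of the logarithmic divergence assembles into the stated asymptotic formula.
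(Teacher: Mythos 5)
Your overall strategy mirrors the paper's: split $E_j$ over the eigenvalues $\alpha$ of $\rho(c_j)$, substitute $z = e^{2\pi i\xi/\nu_j}$, decompose into partial fractions over the roots of $z^{\nu_j}+1$, and then track the complex phases that Fried discarded in the unitary case. Your opening simplification is a genuine improvement over the paper's presentation: rewriting $E_j$ at $m=0$ as $-\sum_k \frac{\pi\,\tr\rho(c_j^k)}{\nu_j\sin(k\pi/\nu_j)}\cdot\frac{\cos((\pi-2k\pi/\nu_j)\xi)}{\cos\pi\xi}$ makes the simple-pole structure at $\xi=1/2$ visible immediately, whereas the paper keeps $Q(z)=(z^M+1)^2$ with double roots and only implicitly relies on $P(z_1)=0$ to reduce to simple poles. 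Your residue computation $n_j - n/\nu_j$ at $\xi = 1/2$, and the check that the per-eigenvalue targets multiply out to the stated formula, are both correct.

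The gap is in the closing step, which is where essentially all the work lives. You assert that "collapsing the resulting sum via the geometric-series manipulations of \cref{lem2} (relying on $\sum_{\omega^{\nu_j}=-1}\omega^k=0$)" delivers the per-eigenvalue asymptotics, but this is not the mechanism that works at $m=0$, and it is not what the paper does. In \cref{lem2} the residues $P(z_l)/Q'(z_l)$ are invariant under $z_l\mapsto z_l^{-1}$, which is what makes the telescoping over $k$ and $M-k$ go through. At $m=0$ one finds instead $P_\alpha(z_l^{-1})=-P_\alpha(z_l)$, so the pairing behaves differently, and the residues $\mu(z_0^{2k+1})=v(k)-(2k+1)/M$ (Fried's Lemma~7) depend on the counting function $v(k)$, which has no analogue in \cref{lem2}. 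The paper's proof then computes the two products
\[
\prod_{k=1}^{M-1}\Bigl(\tfrac{1-z_0^{2k}}{1-z_0^{2k+1}}\Bigr)^{2k+1}
\quad\text{and}\quad
\frac{\prod_{k=1}^{M-1}(1-z_0^{2k})^{v(k)}}{\prod_{k=0}^{M-1}(1-z_0^{2k+1})^{v(k)}}
\]
as separate Claims (phase-tracking generalizations of Fried's Lemmas~8 and~9); these are precisely where the factors $(-i)^{M-1}$, $\alpha^{-1/2}$, and $M/2$ come from, and you do not address them. So your proposal identifies the correct target per-eigenvalue asymptotics and the right contour setup, but the derivation of those asymptotics is asserted rather than proved, and the technique you cite would not reproduce them as stated.
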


\begin{proof}
	Using that $\rho(c_j^{\nu_j})=\rho(u)=I_n$, it is easy to see by substituting $k$ for $\nu_j-k$ in \cref{eq:Ei} that $E_j(\xi)$ is real for $\xi\in\R$. Therefore, $\exp\int_0^{1/2-\varepsilon}E_j$ is positive for sufficiently small $\varepsilon>0$. The statement now follows by the same computations as in \cite[proof of Lemma 6]{fried1986fuchsian}.
\end{proof}

\begin{proof}[Proof of \cref{theo:Ruelle0}]
First we assume $m=0$. Then $\rho(u) = I_n$, and \cref{theo:Ruelle0}~\cref{item1} reads
$$R\left(\frac s {2\pi}; \rho\right) \sim_{s\to 0} (-1)^{\mult(0;\Delta_{\tau_m,\rho}^\sharp)+\sum_{j=1}^r n_j}\frac{s^{n(2g-2+r) - \sum_{j=1}^r n_j}}{\prod_{j=1}^r|\det (I_{n-n_j} - T_j)|\,\nu_j^{-n_j}} + o(s).$$
Now combining the second statement of \cref{lem1} with \cref{lem3} yields
\begin{multline*}
	\eta\left(1+\frac s {2\pi};\rho\right) \sim_{s\to0} (-s)^{-\frac{\dim(V_\rho)\Vol(X)}{2\pi}}\\
	\times\prod_{j=1}^r |\det(I_{n-n_j} -T_j)|\,(-s)^{-\dim(V_\rho)/\nu_j + n_j}\, \nu_j^{-n_j} + o(s).
\end{multline*}
By the Gauss--Bonnet Theorem (see \cref{eq:GaussBonnet}) this can be written as
$$ (-s)^{-n(2g-2+r)+\sum_{j=1}^rn_j}\prod_{j=1}^r |\det(I_{n-n_j} -T_j)|\, \nu_j^{-n_j} + o(s), $$
so the claim follows with \cref{prop:RuelleVsEtaAtZero}.

Now if $m\neq 0$, we deduce from \cref{prop:RuelleVsEtaAtZero}, the first statement in \cref{lem1} and \cref{lem2}, using $\rho(u)= e^{-i\pi m} I_n$:
$$ R(0;\rho) = (-1)^{\mult(0;\Delta_{\tau_m,\rho}^\sharp)}\frac{\det(I_n-\rho(u)^\varepsilon)^{2g-2+r}}{\prod_{j=1}^r\det(I_n-\rho(c_j)^\varepsilon)} $$
with $\varepsilon=\sign(m)$. For $\varepsilon=1$ we immediately obtain \cref{theo:Ruelle0}~\cref{item2}, and for $\varepsilon=-1$ we can use
$$\det(I_n-\rho(u)^{-1})=(-1)^n\det(\rho(u))^{-1}\det(I_n-\rho(u))$$
and the analogous identity for $c_j$ together with the fact that $\prod_{j=1}^r \det(\rho(c_j))=\det(\rho(u))^{2g-2+r}$ (see the proof of \cref{lem:CenterActionIrredPi1X1}).
\end{proof}

Now, \cref{theo:mainFried} is a consequence of \cref{prop:torsion} and \cref{theo:Ruelle0}.

\providecommand{\bysame}{\leavevmode\hbox to3em{\hrulefill}\thinspace}
\providecommand{\href}[2]{#2}

\contact

\end{document}